\numberwithin{equation}{section}
\theoremstyle{plain}
\newtheorem{theorem}{Theorem}[section]
\newtheorem{lemma}[theorem]{Lemma}
\newtheorem{proposition}[theorem]{Proposition}
\newtheorem{corollary}[theorem]{Corollary}
\theoremstyle{definition}
\newtheorem{definition}[theorem]{Definition}
\newtheorem{example}[theorem]{Example}
\newtheorem{remark}[theorem]{Remark}
\newtheorem{question}[theorem]{Question}
\newcommand{\ds}{\displaystyle}
\let\c@equation\c@theorem  % incorporate equation numbering
\DeclareMathOperator{\hdet}{hdet}
\DeclareMathOperator{\gldim}{gldim}
\DeclareMathOperator{\uExt}{\underline{Ext}}
\DeclareMathOperator{\Ext}{Ext}
\DeclareMathOperator{\gr}{gr}
\DeclareMathOperator{\Proj}{Proj}
\DeclareMathOperator{\Aut}{Aut_{gr}}
\DeclareMathOperator{\GKdim}{GKdim}
\begin{document}

\title{Rigidity of graded regular algebras}

\author{E. Kirkman, J. Kuzmanovich and J.J. Zhang}

\address{Kirkman: Department of Mathematics,
P. O. Box 7388, Wake Forest University,
Winston-Salem, NC 27109}

\email{kirkman@wfu.edu}

\address{Kuzmanovich: Department of Mathematics,
P. O. Box 7388, Wake Forest University,
Winston-Salem, NC 27109}

\email{kuz@wfu.edu}

\address{zhang: Department of Mathematics, Box 354350,
University of Washington, Seattle, Washington 98195,
USA}

\email{zhang@math.washington.edu}

\begin{abstract}
We prove a graded version of Alev-Polo's rigidity theorem:
the homogenization of the universal enveloping algebra
of a semisimple Lie algebra and the Rees ring of
the Weyl algebras $A_n(k)$ cannot be isomorphic to their
fixed subring under any finite group action. We also show
the same result for other classes of graded regular
algebras including the Sklyanin algebras.
\end{abstract}

\subjclass[2000]{16A62,16E70,16W30,20J50}

%16A62 (1973-1990) Homological methods

%16E70 (1991-1999) Other rings of low
%           global or flat dimension

%16W30 (1991-now) Coalgebras, bialgebras,
%      Hopf algebras [See also 16S40, 57T05];

%20J05 (1973-now) Homological methods in group theory

\keywords{Artin-Schelter regular algebra, group action,
reflection, trace, Hilbert series, fixed subring,
quantum polynomial rings}

%\thanks{the US National Science Foundation.}

\maketitle

%\tableofcontents

\setcounter{section}{-1}
\section{Introduction}
\label{sec0}

The invariant theory of $k[x_1, \cdots, x_n]$ is a rich
subject whose study has motivated many developments in
commutative algebra and algebraic geometry. One important
result is the Shephard-Todd-Chevalley Theorem [Theorem
\ref{xxthm1.1}] that gives necessary and sufficient
conditions for the fixed subring $k[x_1, \cdots, x_n]^G$
under a finite subgroup $G$ of $GL_n(k)$ to be a polynomial
ring. The study of the invariant theory of noncommutative
algebras is not understood well, and it is reasonable to
begin with the study of finite groups acting on rings that
are seen as generalizations of polynomial rings.

We will show that in contrast to the commutative case,
a noncommutative regular algebra  $A$ is often {\it rigid},
meaning that $A$ is not isomorphic to any fixed subring
$A^G$ under a non-trivial group of automorphisms $G$ of
$A$. A typical result is the Alev-Polo rigidity theorem
that shows that both the universal enveloping algebra of
a semisimple Lie algebra and the Weyl algebras $A_n(k)$
are rigid algebras.

\begin{theorem}[Alev-Polo rigidity theorem \cite{AP}]
\label{xxthm0.1}
$\quad$
\begin{enumerate}
\item
Let ${\mathfrak g}$ and ${\mathfrak g}'$ be two
semisimple Lie algebras. Let $G$ be a finite group of
algebra automorphisms of $U({\mathfrak g})$ such that
$U({\mathfrak g})^G \cong U({\mathfrak g}')$. Then $G$
is trivial and ${\mathfrak g}\cong {\mathfrak g}'$.
\item
If $G$ is a finite group of algebra
automorphisms of $A_n(k)$ then the fixed subring
$A_n(k)^G$ is isomorphic to $A_n(k)$ only when $G$
is trivial.
\end{enumerate}
\end{theorem}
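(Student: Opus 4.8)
The plan is to reduce each statement, via associated graded rings, to classical commutative invariant theory. The first step is to \emph{linearize} the action. For the Weyl algebra one invokes the fact that every finite group of automorphisms of $A_n(k)$ is conjugate, by an automorphism of $A_n(k)$, to a subgroup of $\operatorname{Sp}_{2n}(k)$ acting linearly on $V=k^{2n}$ and preserving the symplectic form $\omega$. For $U(\mathfrak g)$ one uses the (nontrivial) fact that every algebra automorphism of $U(\mathfrak g)$ stabilizes the canonical PBW filtration; granting this, it restricts to a linear, bracket-preserving map on $\mathfrak g=F_1/F_0$ --- the scalar part being forced to vanish on brackets since $[\mathfrak g,\mathfrak g]=\mathfrak g$ --- so $G\hookrightarrow\operatorname{Aut}(\mathfrak g)$ acts linearly on $V=\mathfrak g$, preserving the Killing form. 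In either case, writing $R=A_n(k)$ (resp.\ $U(\mathfrak g)$) with its Bernstein (resp.\ canonical) filtration, the $G$-action is now filtered, $\gr R=S(V)$ is a polynomial ring on which $G$ acts linearly, and, $|G|$ being invertible in $k$, averaging gives $\gr(R^G)=S(V)^G$; note also that $R$ is module-finite over $R^G$.

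The heart of the proof is to show that the hypothesized isomorphism $\Phi\colon R^G\to R'$, with $R'=A_n(k)$ (resp.\ $U(\mathfrak g')$), forces $S(V)^G=\gr R^G$ to be a polynomial ring. The natural route is to show that $\Phi$ is compatible with filtrations --- carrying the filtration on $R^G$ induced from $R$ to the canonical filtration of $R'$ --- so that $S(V)^G\cong\gr R'$, which is a polynomial ring. The subtle point, and the step I expect to be the main obstacle, is precisely this filtration comparison: an abstract isomorphism need not respect filtrations, and finite global dimension of $R^G$ (which holds automatically, $R^G$ being a direct summand of the smash product $R\#G$) does not by itself force $S(V)^G$ to be regular. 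What is needed is that the relevant filtrations are \emph{intrinsic} --- for $A_n(k)$, that the Bernstein filtration is essentially characterized by having a commutative, regular associated graded ring of the correct Gelfand--Kirillov dimension; for $U(\mathfrak g')$, a reconstruction of the canonical filtration from the $\operatorname{ad}$-locally-finite structure together with the center and the Gelfand--Kirillov dimensions of subalgebras --- and it is here that the available homological and representation-theoretic input (Auslander regularity, Cohen--Macaulayness, the behaviour of centers under $(-)^G$, module-finiteness of $R$ over $R^G$) would be deployed.

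Granting that $S(V)^G$ is a polynomial ring, the Shephard--Todd--Chevalley theorem (Theorem~\ref{xxthm1.1}) shows that $G$ is generated by the pseudo-reflections it contains, so it remains to observe that there are none other than the identity. If $g\in\operatorname{Aut}(\mathfrak g)$ were a pseudo-reflection then, since $g$ has finite order (and $k$ has characteristic zero) it is semisimple, so $\mathfrak g=H\oplus kv$ with $H=\mathfrak g^{g}$ a subalgebra of codimension one and $g(v)=\zeta v$ for some $\zeta\ne 1$; since $g$ is a Lie automorphism, $g[h,v]=\zeta[h,v]$ for $h\in H$, so $[H,v]\subseteq kv$, whence $[\mathfrak g,kv]\subseteq kv$ and $kv$ is a one-dimensional ideal of $\mathfrak g$ --- impossible for $\mathfrak g$ semisimple. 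If $g\in\operatorname{Sp}_{2n}(k)$ were a pseudo-reflection, then writing $g-1$ as a rank-one operator and imposing $\omega(gx,gy)=\omega(x,y)$ forces $g$ to be a symplectic transvection $x\mapsto x+\lambda\,\omega(x,v_0)v_0$, which is unipotent and hence of infinite order, contradicting finiteness of $G$. In both cases $G$ is trivial. Finally, for (1), $U(\mathfrak g)\cong U(\mathfrak g')$ then yields $\mathfrak g\cong\mathfrak g'$, recovering each Lie algebra from its enveloping algebra via the (again intrinsic) canonical filtration, whose degree-one quotient carries the bracket; for (2), triviality of $G$ is the assertion.
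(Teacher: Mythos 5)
First, a point of orientation: the paper does not prove Theorem \ref{xxthm0.1} at all. It is quoted from \cite{AP} as motivation, and the body of the paper proves only a \emph{graded} analogue (Theorem \ref{xxthm0.2}) by entirely different means (trace functions, Hilbert series, quasi-reflections). So there is no in-paper proof to compare against; your proposal must stand on its own, and it does not.

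The two reductions on which everything rests are each either false or open. The claim that every algebra automorphism of $U({\mathfrak g})$ stabilizes the PBW filtration is false: already for ${\mathfrak g}={\mathfrak {sl}}_2$, the automorphism $\exp(\operatorname{ad}(e^2))$ (with $e$ nilpotent) sends $f$ to an element of filtration degree $\geq 2$, so it is not filtered. The paper itself flags this issue when it remarks that ``\cite{AP} did not restrict itself to filtered automorphisms'' --- coping with non-filtered automorphisms is precisely the hard part of the Alev--Polo theorem, and it is the reason the present paper retreats to graded automorphisms. Likewise, the assertion that every finite subgroup of $\operatorname{Aut}(A_n(k))$ is conjugate to a subgroup of $\operatorname{Sp}_{2n}(k)$ is a theorem only for $n=1$ (via the amalgamated-product structure of $\operatorname{Aut}(A_1)$); for $n\geq 2$ the linearization of finite group actions on $A_n$ is an open problem, so it cannot be ``invoked.'' Beyond this, the step you yourself identify as the heart of the argument --- showing that the abstract isomorphism $\Phi\colon R^G\to R'$ can be made filtered so that $S(V)^G\cong \gr R'$ --- is not carried out; you describe what would need to be true (intrinsic characterizations of the filtrations) without establishing it, and the parenthetical claim that $R^G$ automatically has finite global dimension as a corner ring $e(R\#G)e$ also needs an argument (one must know $(R\#G)e(R\#G)=R\#G$). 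The only fully correct portion is the endgame: the verification that $\operatorname{Aut}({\mathfrak g})$ for ${\mathfrak g}$ semisimple contains no pseudo-reflections (a reflection would produce a one-dimensional ideal) and that symplectic pseudo-reflections are transvections of infinite order. Those observations are sound, and the first is closely parallel to the paper's Lemma \ref{xxlem6.5}, but they only become relevant after the unproven reductions.
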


The main goal of this paper is to investigate a similar
question for graded algebras. As one example, in Section
\ref{sec6} we prove the following graded version of the
Alev-Polo rigidity theorem. Let $H({\mathfrak g})$ denote
the homogenization of the universal enveloping algebra of
a finite dimensional Lie algebra ${\mathfrak g}$ (the
definition is given in Section \ref{sec6}).

\begin{theorem}
\label{xxthm0.2}
\begin{enumerate}
\item
Let ${\mathfrak g}$ and ${\mathfrak g}'$ be Lie
algebras with no 1-dimensional Lie ideal. Let $G$ be
a finite group of graded algebra automorphisms of
$H({\mathfrak g})$ such that $H({\mathfrak g})^G\cong
H({\mathfrak g}')$ (as ungraded algebras). Then $G$
is trivial and ${\mathfrak g}\cong {\mathfrak g}'$.
\item
Let $A$ be the Rees ring of the Weyl algebra
$A_n(k)$ (with respect to the standard filtration of
$A_n(k)$). Then $A$ is not isomorphic to $A^G$
(as ungraded algebras) for any finite non-trivial group
of graded automorphisms.
\end{enumerate}
\end{theorem}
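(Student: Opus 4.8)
The plan is to reduce both statements to a single counting fact about graded automorphisms, and then eliminate the word ``ungraded'' from the hypothesis. Write $B$ for the algebra at hand: in part (1), $B=H(\mathfrak g)$, and in part (2), $B=A=\operatorname{Rees}(A_n(k))$. In either case $B$ is a connected graded noetherian domain generated in degree $1$, Artin--Schelter regular of some global dimension $d$ (namely $d=\dim\mathfrak g+1$, resp.\ $d=2n+1$), with Hilbert series $H_B(t)=(1-t)^{-d}$, hence Koszul; and $B$ carries a central element $z\in B_1$ with $B/zB$ a commutative polynomial ring and $B/(z-1)$ isomorphic to $U(\mathfrak g)$, resp.\ to $A_n(k)$.

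The graded core of the argument goes as follows. For a graded automorphism $g$ of a Koszul AS-regular algebra of this type one has $\operatorname{Tr}_B(g,t)=\det(1-t\,g|_{B_1})^{-1}$, whose order of pole at $t=1$ is the multiplicity of the eigenvalue $1$ of $g|_{B_1}$; since $g$ has finite order it acts semisimply on $B_1$, so this order is $d$ when $g=1$ and at most $d-1$ when $g\ne1$. If $G$ is a finite group of graded automorphisms of $B$, Molien's theorem gives $H_{B^G}(t)=|G|^{-1}\sum_{g\in G}\operatorname{Tr}_B(g,t)$. Now assume that, \emph{as graded algebras}, $B^G\cong R$ with $H_R(t)=(1-t)^{-d}$. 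Multiplying the Molien identity by $(1-t)^d$ and letting $t\to1^-$: the left side tends to $1$, the $g=1$ summand contributes $|G|^{-1}$, and every $g\ne1$ summand contributes $0$; hence $|G|=1$.

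The main work is to upgrade the \emph{ungraded} isomorphism $B^G\cong R$ of the theorem ($R=H(\mathfrak g')$ in part (1), $R=B$ in part (2)) to a graded one. First, Gelfand--Kirillov dimension is preserved on passing to the fixed ring of a finite group, so $\operatorname{GKdim}R=\operatorname{GKdim}B^G=\operatorname{GKdim}B=d$; in part (1) this already forces $\dim\mathfrak g'=\dim\mathfrak g$. To recover the grading, note that $z$ is, up to a nonzero scalar, the unique central element of $B$ of degree $1$: in part (1) a degree-$1$ central element is $z$ plus a central element of $\mathfrak g$, and ``$\mathfrak g$ has no $1$-dimensional Lie ideal'' forces $\mathfrak z(\mathfrak g)=0$; in part (2) the same holds because $A_n(k)$ has trivial centre. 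From $z$, together with $B/zB$ and $B[z^{-1}]$, one reconstructs the graded structure of $B$ intrinsically, and likewise that of $R$; hence the given isomorphism $B^G\cong R$ may be chosen graded. I expect this reduction to be the hardest part, and it is exactly where the hypothesis is needed: if $\mathfrak g$ is abelian then $H(\mathfrak g)$ is a polynomial ring, and if, say, $\mathfrak g$ is the two-dimensional non-abelian Lie algebra then $H(\mathfrak g)$ admits quasi-reflections, so in both of these cases $H(\mathfrak g)^G\cong H(\mathfrak g')$ can occur with $G$ nontrivial and the theorem fails.

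Putting the pieces together, $G$ is trivial. In part (2) this contradicts the assumption that $G$ is a nontrivial group, so $A\not\cong A^G$, as claimed. In part (1), triviality of $G$ gives a graded isomorphism $\varphi\colon H(\mathfrak g)\xrightarrow{\sim}H(\mathfrak g')$ with $\varphi(z)=cz'$, $c\ne0$; thus $\varphi$ induces a linear isomorphism $\bar\varphi\colon\mathfrak g\to\mathfrak g'$, and comparing the degree-$2$ components of the images of the defining relations $xy-yx=z[x,y]_{\mathfrak g}$ (for $x,y\in\mathfrak g$) shows that $c^{-1}\bar\varphi$ is a Lie algebra isomorphism, so $\mathfrak g\cong\mathfrak g'$.
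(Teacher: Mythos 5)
Your endgame for part (1) --- Molien's formula plus ``the pole of $Tr_B(g,t)$ at $t=1$ has order $<d$ for $g\ne 1$'' forces $|G|=1$ --- is sound in outline, but two supporting steps have genuine gaps. First, the identity $Tr_B(g,t)=\det(1-t\,g|_{B_1})^{-1}$ is false for noncommutative quantum polynomial rings: Example \ref{xxex2.3}(a) of the paper is an explicit counterexample, where $h|_{A_1}$ has eigenvalues $\{1,-1\}$ so your formula predicts $1/((1-t)(1+t))$, yet $Tr_A(h,t)=1/(1+t^2)$ and the pole at $t=1$ vanishes entirely. (Koszulity relates $Tr_B(g,t)$ to the trace on the Koszul dual $B^!$, which is not the exterior algebra when $B$ is noncommutative.) The fact you need --- for a regular domain and $g\ne \mathrm{id}$ of finite order the pole order at $t=1$ is at most $d-1$ --- is true, but it is Proposition \ref{xxprop1.8}, proved via a growth estimate on the module $A/xA$ (Lemma \ref{xxlem1.7}), not by linear algebra on $B_1$.

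Second, and more seriously, the reduction from an ungraded to a graded isomorphism is asserted rather than proved, and it is where the difficulty lives. The paper sidesteps it for part (1) by using an \emph{ungraded} invariant: $H(\mathfrak g)$ has no quasi-reflections (Lemma \ref{xxlem6.5}(d)), so by Theorem \ref{xxthm2.4} the fixed ring $H(\mathfrak g)^G$ has \emph{infinite global dimension} for every nontrivial $G$, hence cannot be isomorphic, even as an ungraded algebra, to the regular algebra $H(\mathfrak g')$. For part (2) your sketch fails at exactly the case that matters: the Rees ring $A$ \emph{does} admit quasi-reflections ($g(z)=-z$, $g(x_i)=x_i+a_iz$, $g(y_i)=y_i+b_iz$), and for such $g$ the fixed ring $A^g$ is Artin--Schelter regular (Proposition \ref{xxprop6.7}); since $z\notin A^g$, your plan of recovering the grading from ``the unique central degree-one element'' has nothing to grab onto, and no Hilbert-series comparison can finish the job because $A^g$ is genuinely regular. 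The paper must instead classify the possible reflection groups ($G=\{1,g\}$, Proposition \ref{xxprop6.7}(b)) and then distinguish $A$ from $A^g$ by another ungraded invariant, the minimal number of algebra generators ($2n+1$ versus $2n$, Corollary \ref{xxcor6.8}). Your final step recovering $\mathfrak g\cong\mathfrak g'$ from a graded isomorphism is essentially Lemma \ref{xxlem6.5}(e) and is fine, as is your observation that the hypothesis on $1$-dimensional Lie ideals is necessary (cf.\ Example \ref{xxex6.6}).
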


Artin-Schelter regular algebras [Definition \ref{xxdefn1.5}]
are a class of graded algebras that are generalizations of
polynomial algebras, and they have been used in many
areas of mathematics and physics. One can ask whether
an Artin-Schelter regular algebra $A$ can be isomorphic to a
fixed subring $A^G$ when $G$ is a non-trivial finite
group of graded algebra automorphisms of $A$.  One
could consider fixed rings under ungraded automorphisms
(\cite{AP} did not restrict itself to filtered
automorphisms) also, but we leave that problem to others.
Although it is easy to construct noncommutative algebras
$A$ and groups of automorphisms $G$ where $A^G$ is
isomorphic to $A$ [Example \ref{xxex1.2}], it turns
out that this happens less often than we expected
[Lemma \ref{xxlem5.2}(b)], and we will provide both
some necessary conditions and some sufficient conditions
for this problem.  Our work thus far suggests that a
generalization of the Shephard-Todd-Chevalley Theorem
requires a new notion of reflection group, one that
depends on the Hilbert series of the Artin-Schelter
regular algebra $A$
(for the conditions used in the commutative case turn
out to be neither necessary nor sufficient
[Example \ref{xxex2.3}]). In this paper we focus on
Artin-Schelter regular algebras that have the same
Hilbert series as commutative polynomial rings.
We call $A$ a {\it quantum polynomial ring}
({\it of dimension $n$}) if it is a noetherian, graded,
Artin-Schelter regular domain of global dimension $n$,
with Hilbert series $(1-t)^{-n}$. Skew polynomial rings,
$H({\mathfrak g})$, the Rees rings of the Weyl algebras,
and Sklyanin algebras are all quantum polynomial rings.
One of our results is the following.

\begin{theorem}[Theorem \ref{xxthm6.2}]
\label{xxthm0.3}
Let $A$ be a quantum polynomial ring. Suppose that
there is no nonzero element $b\in A_1$ such that
$b^2$ is normal in $A$. Then $A$ is not isomorphic
to $A^G$ as ungraded algebras for any non-trivial
finite group $G$ of graded algebra automorphisms.
\end{theorem}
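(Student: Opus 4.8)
The plan is to argue by contradiction: suppose $A\cong A^{G}$ as ungraded algebras for some non-trivial finite group $G$ of graded algebra automorphisms of $A$, and deduce the existence of a nonzero $b\in A_{1}$ with $b^{2}$ normal in $A$, contrary to hypothesis. First I would record the consequences of this isomorphism. Since $A$ is Artin--Schelter regular it has finite global dimension $n$, and global dimension is an invariant of the ungraded algebra, so $\gldim A^{G}=n$. Also $A^{G}$ is connected graded (the $G$-action being graded), is a domain (a subring of the domain $A$), and --- as $A$ is noetherian and $G$ finite --- is noetherian with $A$ module-finite over it, so $\GKdim A^{G}=\GKdim A=n$. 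By the study of graded fixed rings in the earlier sections (cf.\ Lemma~\ref{xxlem5.2}), a connected graded noetherian domain with these invariants that is ungraded-isomorphic to a quantum polynomial ring is itself Artin--Schelter regular; hence $A^{G}$ is AS-regular of global dimension $n$. It is essential \emph{not} to assert that $H_{A^{G}}(t)=(1-t)^{-n}$: the grading on $A^{G}$ inherited from $A$ need not be the standard one, and this is exactly the slack --- visible already in $k[x_{1},\dots,x_{n}]^{\mu_{m}}=k[x_{1}^{m},x_{2},\dots,x_{n}]$ --- that makes the commutative polynomial ring non-rigid and that the hypothesis on $b^{2}$ must close off.

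Next I would invoke the Shephard--Todd--Chevalley-type theorem for quantum polynomial rings proved earlier: since $A^{G}$ is AS-regular, $G$ is generated by quasi-reflections of $A$, so (as $G\neq 1$) $G$ contains a quasi-reflection $g$, of some order $m\geq 2$. Working in characteristic $0$ (or coprime to $|G|$), $g$ is diagonalizable on $A_{1}$, and by the classification of quasi-reflections of a quantum polynomial ring it is one of two types: (a) $g|_{A_{1}}$ has eigenvalue $1$ with multiplicity $n-1$ and one further eigenvalue $\xi$, a primitive $m$-th root of unity, so that $\operatorname{Tr}_{A}(g,t)=\bigl((1-t)^{n-1}(1-\xi t)\bigr)^{-1}$; or (b) $g$ is a mystic quasi-reflection, of order $4$, with $\operatorname{Tr}_{A}(g,t)=\bigl((1-t)^{n-2}(1+t^{2})\bigr)^{-1}$.

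The core of the argument is to extract from such a $g$ a nonzero $b\in A_{1}$ with $b^{2}$ normal. In case (a) I would take $b\in A_{1}$ to be the $\xi$-eigenvector of $g$ and prove that $b$ is normal in $A$ (whence so is $b^{2}$): the cyclic fixed ring $A^{\langle g\rangle}$ is AS-regular and, by Molien's theorem,
\[
H_{A^{\langle g\rangle}}(t)=\frac{1}{m}\sum_{j=0}^{m-1}\frac{1}{(1-t)^{n-1}\,(1-\xi^{j}t)}=\frac{1}{(1-t)^{n-1}(1-t^{m})},
\]
so $A^{\langle g\rangle}$ is generated by the fixed hyperplane $V=(A_{1})^{\langle g\rangle}$ together with $b^{m}\in A_{m}$, and $A$ is a free graded $A^{\langle g\rangle}$-module on $1,b,\dots,b^{m-1}$; feeding this back --- using that $V$ and $b$ generate $A$ and that $A$ is a domain --- yields $bA=Ab$. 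In case (b) the explicit normal form of a mystic quasi-reflection (these occur only when $A$ carries a $(-1)$-type normalizing pair in degree $1$) directly produces a $b\in A_{1}$ with $b^{2}$ normal. Either way the hypothesis is contradicted, forcing $G=1$.

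The step I expect to be the main obstacle is this last one --- turning ``$G$ contains a quasi-reflection'' into ``some $0\neq b\in A_{1}$ has $b^{2}$ normal'' --- since it rests on the full structure theory of quasi-reflections of quantum polynomial rings and of their cyclic fixed rings. Two points are delicate: that the classification of quasi-reflections is exhaustive (capturing the mystic case and any degenerate behaviour), and that the normal element produced genuinely lies in degree $1$. In case (a) one in fact needs the strong statement that the eigenvector $b$ itself is normal in $A$ --- considerably more than $b^{m}$ being central in $A^{\langle g\rangle}$ --- and it is here that the quantum-polynomial-ring hypotheses (domain, global dimension $n$, Hilbert series $(1-t)^{-n}$) are really used.
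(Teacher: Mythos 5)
Your overall strategy coincides with the paper's: an ungraded isomorphism $A\cong A^G$ forces $\gldim A^G<\infty$, Theorem \ref{xxthm2.4} then produces a quasi-reflection $g\in G$, the classification (Theorem \ref{xxthm3.1}) splits $g$ into a classical reflection of $A_1$ or a mystic reflection of order $4$, and each case must yield a $b\in A_1$ with $b^2$ normal, contradicting the hypothesis. Two of your citations are off but harmless: you do not need, and the paper does not prove, a full Shephard--Todd--Chevalley theorem (``$G$ is generated by quasi-reflections'' is only conjectured there; Theorem \ref{xxthm5.3} is a partial version for cyclic $p$-groups) --- Theorem \ref{xxthm2.4} already supplies the single quasi-reflection you actually use; and the trace of a mystic reflection is $\bigl((1-t)^{n-1}(1+t)\bigr)^{-1}$ by Lemma \ref{xxlem4.2}, not $\bigl((1-t)^{n-2}(1+t^2)\bigr)^{-1}$, which has a pole of order only $n-2$ at $t=1$ and so would not even be a quasi-reflection.

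The genuine gap is in your case (a) for a reflection of order $m>2$. Your Molien computation of $H_{A^{\langle g\rangle}}(t)$ requires $Tr_A(g^j,t)=\bigl((1-t)^{n-1}(1-\xi^jt)\bigr)^{-1}$ for \emph{every} $j$, but the hypothesis that $g$ is a quasi-reflection gives this only for $j=1$ (and, via Lemma \ref{xxlem1.4}, for $j$ coprime to $m$). The trace of a graded automorphism of a noncommutative quantum polynomial ring is not determined by its eigenvalues on $A_1$: in Example \ref{xxex2.3}(a) the automorphism acting as $\mathrm{diag}(-1,1)$ on $A_1$ has trace $(1+t^2)^{-1}$ rather than $\bigl((1-t)(1+t)\bigr)^{-1}$, and in Example \ref{xxex2.3}(b) the square of a quasi-reflection fails to be one. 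Likewise ``$A^{\langle g\rangle}$ is generated by $V$ and $b^m$'' and the freeness of $A$ over $A^{\langle g\rangle}$ on $1,b,\dots,b^{m-1}$ do not follow from a Hilbert series alone. The paper closes exactly this hole structurally rather than numerically: for $\xi\neq\pm1$ the quadratic (Koszul) relations of $A$ are forced to be homogeneous for the $\mathbb{Z}^2$-grading $\deg b=(1,0)$, $\deg b_j=(0,1)$, whence $A=C[b;\sigma]$ is an Ore extension and $b$ is normal (Lemma \ref{xxlem5.1}(b) via Proposition \ref{xxprop3.5}(c)); for $\xi=-1$ a Molien computation involving only $g$ itself and the identity shows $A^{-}=bA^g=A^gb$, again giving normality of $b$ (Lemma \ref{xxlem5.2}(d)). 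You would need one of these structural arguments to make your case (a) go through.
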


If $A$ is viewed as the coordinate ring of a noncommutative
affine $n$-space, then Theorem \ref{xxthm0.3} can be
interpreted as: a ``very noncommutative'' affine $n$-space
cannot be isomorphic to any quotient space of itself
under a non-trivial finite group action. If we really
understood noncommutative spaces, this might be a
simple fact. The hypothesis that $A$ has no
normal element of the form $b^2$ is easy to check in many
cases. For example, Theorem \ref{xxthm0.3} applies to the
non-PI Sklyanin algebras of dimension $n$.

\begin{corollary}[Corollary \ref{xxcor6.3}]
\label{xxcor0.4}
Let $S$ be a non-PI Sklyanin algebra of global dimension
$n \geq 3$. Then $S$ is not isomorphic to $S^G$ for
any non-trivial finite group $G$ of graded algebra
automorphisms.
\end{corollary}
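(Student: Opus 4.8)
The plan is to deduce the corollary directly from Theorem \ref{xxthm0.3}. Thus, for a non-PI Sklyanin algebra $S$ of global dimension $n\geq 3$, exactly two things have to be checked: that $S$ is a quantum polynomial ring, and that $S$ contains no nonzero element $b\in S_1$ with $b^2$ normal in $S$. The first of these is purely a matter of recalling known structure theory; the second is where the argument is located.

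For the first point I would quote the now-standard theory of Sklyanin algebras (Artin--Tate--Van den Bergh for $n=3$ and $n=4$, and Tate--Van den Bergh together with Smith in the general case): $S$ is a connected graded noetherian domain, it is Artin--Schelter regular of global dimension $n$, and its Hilbert series is $(1-t)^{-n}$; hence $S$ is a quantum polynomial ring in the sense of the paper. From the same theory I would also record the canonical surjection $\pi\colon S\twoheadrightarrow B$ onto the twisted homogeneous coordinate ring $B=B(E,\mathcal L,\sigma)$ of the associated elliptic curve $E$, where $\mathcal L$ has degree $n$ and $\sigma$ is the relevant translation automorphism of $E$; crucially, $S$ being non-PI is exactly the statement that $\sigma$ has infinite order, equivalently that this translation has no finite orbit on $E$.

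For the second point, suppose toward a contradiction that $0\neq b\in S_1$ and $b^2$ is normal in $S$. Since $\dim_k S_1=n=\deg\mathcal L=\dim_k H^0(E,\mathcal L)=\dim_k B_1$ and $\pi$ is surjective, $\pi$ restricts to an isomorphism $S_1\xrightarrow{\ \sim\ }B_1$; in particular $\bar b:=\pi(b)\neq 0$, and hence $\bar b^{\,2}\neq 0$ because $B$ is a domain. The image of a normal element under a surjective ring homomorphism is again normal, so $\bar b^{\,2}$ is a nonzero normal element of $B$ of degree $2$. But $B=B(E,\mathcal L,\sigma)$ has no nonzero normal element of positive degree: in the standard embedding $B\subseteq k(E)[t;\sigma]$ a homogeneous normal element of degree $d$ has the form $ft^d$ with $0\neq f\in H^0(E,\mathcal L_d)$, and normality forces the effective divisor $\operatorname{div}(f)$ to be $\sigma$-invariant; since $\sigma$ has no finite orbit the only $\sigma$-invariant effective divisor is $0$, which forces $d=0$. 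This contradiction shows no such $b$ exists, and Theorem \ref{xxthm0.3} then yields the corollary.

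The one genuinely nontrivial input, and the step I expect to be the main obstacle, is the non-existence of positive-degree normal elements of $B=B(E,\mathcal L,\sigma)$ under translation by a non-torsion point; I would isolate this as a lemma and prove it by the divisor computation indicated above, namely: writing out normality of $ft^d$ against a generic section of $B_e$ with $\mathcal L_e$ chosen sufficiently ample shows that the degree-zero ``defect'' $\operatorname{div}(f)-\sigma^{-e}\operatorname{div}(f)$ must vanish for all large $e$, whence $\operatorname{div}(f)$ is $\sigma$-invariant. (For $n=3$ and $n=4$ one can alternatively avoid $B$ and argue directly from the known normal elements of $S$ in low degree, but passing to $B$ has the advantage of treating all $n\geq 3$ uniformly and of using the non-PI hypothesis in its cleanest form.)
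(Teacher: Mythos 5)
Your proposal is correct, and its overall skeleton coincides with the paper's: reduce via Theorem \ref{xxthm0.3} to showing $S$ has no degree-one element $b$ with $b^2$ normal, push such a $b$ through the canonical surjection $\phi:S\to B(E,\sigma,\mathcal L)$ (an isomorphism in degree $1$), and note that $\bar b^2$ would be a nonzero normal element of the domain $B$ in positive degree. The two arguments part ways only at the final contradiction. The paper quotes from \cite{RRZ} that $B$ is projectively simple when $\sigma$ has infinite order, observes that $\GKdim B=2$ so that $B/(\bar b^2)$ is an infinite-dimensional proper factor ring, and is done; you instead prove outright that $B$ has no nonzero homogeneous normal element of positive degree, by the divisor computation in $k(E)[t;\sigma]$ (normality of $ft^d$ forces the base loci $\operatorname{div}(f)$ and $\sigma^{-e}\operatorname{div}(f)$ of the two sides to agree, hence $\operatorname{div}(f)$ is $\sigma$-invariant, which is impossible for a nonzero effective divisor when the translation has no finite orbit). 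Your lemma is essentially an inlined special case of the mechanism behind the projective simplicity result being cited, so the mathematical content is the same; what your route buys is self-containedness (no appeal to \cite{RRZ}, and no need for the small auxiliary observation that $B/(\bar b^2)$ is infinite-dimensional), at the cost of having to carry out the base-point-freeness and orbit argument carefully, which you have only sketched but whose sketch is sound. Either version is acceptable.
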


The method of proving Theorems \ref{xxthm0.2}(a) and
\ref{xxthm0.3} is to show that $H({\mathfrak g})^G$ and $A^G$ do
not have finite global dimension for any non-trivial $G$. This
method applies to other algebras such as down-up algebras (see
Proposition \ref{xxprop6.4}) which are not quantum polynomial
rings.  However, if $A$ is the Rees ring of the Weyl algebra
$A_n(k)$ then there are groups $G$ of automorphisms of $A$ so that
$A$ has a fixed subring $A^G$ that is Artin-Schelter regular, but
not isomorphic to $A$ [Example \ref{xxex5.4}].  Since commutative
polynomial rings are the only commutative (Artin-Schelter) regular
algebras, the situation where $A^G$ is Artin-Schelter regular, but
not isomorphic to $A$, does not arise in the commutative case.
Hence this paper deals with a small portion of a more fundamental
question: find all noetherian graded Artin-Schelter regular
algebras $A$ and finite groups $G$ of graded algebra automorphisms
of $A$ such that $A^G$ has finite global dimension.  Given a
well-studied quantum polynomial ring, it should be possible to
find all finite groups $G$ such that $A^G$ has finite global
dimension. Following the commutative case, we call such a group a
{\it reflection group}. For algebras in Theorems \ref{xxthm0.2}(a)
and \ref{xxthm0.3} and Corollary \ref{xxcor0.4}, there is no
non-trivial reflection group.

For the simplest noncommutative ring $k_q[x,y]$ with relation
$xy=qyx$ for a nonzero scalar $q$ in the base field $k$, all
reflection groups for $k_q[x,y]$ have been worked out completely,
and these results motivated our approach to general Artin-Schelter
regular algebras. However, the project becomes much harder when
the global dimension of the algebra $A$ is higher, and less is
known about large dimension Artin-Schelter regular algebras.

Some ideas in the classical Shephard-Todd-Chevalley
theorem for the commutative polynomial ring can be
extended to the noncommutative case. Let $A$ be
a quantum polynomial ring, and let $g$ be a graded algebra
automorphism of $A$. Then $g$ is called a
{\it quasi-reflection} of a quantum polynomial ring
of dimension $n$ if its trace is of the form
$$Tr_A(g,t)={\frac{1}{(1-t)^{n-1}(1-\xi t)}}$$
for some scalar $\xi \neq 1$.
We classify all possible quasi-reflections of quantum
polynomial rings in Theorem \ref{xxthm3.1}, which states
that, with only one interesting exception, the
quasi-reflections of a quantum polynomial ring are
reflections of the generating space $A_1$ of $A$. The notion
of quasi-reflection is extended to Artin-Schelter regular algebras,
and we prove that for any Artin-Schelter regular algebra $A$,
if $A^G$ has finite global dimension, then $G$ must contain
a quasi-reflection [Theorem \ref{xxthm2.4}]. Therefore
Theorems \ref{xxthm0.2}(a) and \ref{xxthm0.3} follow by
verifying that $H({\mathfrak g})$ in Theorem
\ref{xxthm0.2}(a) and $A$ in Theorem \ref{xxthm0.3}
do not have any quasi-reflections.  More work is
required in analyzing the fixed ring of the
Rees algebras of the Weyl algebras, as they have
quasi-reflections and Artin-Schelter regular fixed
rings [Proposition \ref{xxprop6.7} and Corollary \ref{xxcor6.8}].

As a secondary result we formulate a partial
version of Shephard-Todd-Chevalley theorem for
noncommutative Artin-Schelter regular algebras.

\begin{theorem}[Theorem \ref{xxthm5.3}]
\label{xxthm0.5}
Let $A$ be a quantum polynomial ring and let $g$ be a
graded algebra automorphism of $A$ of order $p^m$ for
some prime $p$ and some natural number $m$. Then
$A^g$ has finite global dimension if and only if
$g$ is a quasi-reflection.
\end{theorem}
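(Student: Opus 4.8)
The plan is to establish both directions by reducing to the behavior of quasi-reflections inside the cyclic $p$-group $\langle g\rangle$. The reverse implication is the easy one: if $g$ is a quasi-reflection, then by the analysis of quasi-reflections of quantum polynomial rings in Theorem \ref{xxthm3.1}, $g$ is (with the one exceptional case handled separately) a genuine reflection of the generating space $A_1$, and one checks directly that in each case on the list $A^g$ is again Artin-Schelter regular—in particular of finite global dimension. For the exceptional quasi-reflection one verifies the regularity of the fixed ring by hand. This part is essentially bookkeeping against the classification.

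For the forward implication, suppose $A^g$ has finite global dimension; I want to conclude $g$ is a quasi-reflection. First I would invoke Theorem \ref{xxthm2.4}: since $G=\langle g\rangle$ is a group of graded automorphisms with $A^G$ of finite global dimension, $G$ must contain a quasi-reflection, say $h=g^j$. Now I use the hypothesis that $|g|=p^m$ is a prime power: the subgroup generated by any quasi-reflection $h=g^j$ is nontrivial, hence contains the unique subgroup of order $p$ in the cyclic $p$-group $\langle g\rangle$, namely $\langle g^{p^{m-1}}\rangle$. The key point is then to show that $g^{p^{m-1}}$ is itself a quasi-reflection, and from there to bootstrap up to $g$ itself. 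The trace computation $Tr_A(h,t)=\tfrac{1}{(1-t)^{n-1}(1-\xi t)}$ for a quasi-reflection $h$ constrains the eigenvalues of $h$ on $A_1$ to be $1$ with multiplicity $n-1$ (up to the exceptional case); taking an appropriate power, the eigenvalues of a generator must be compatible with this, forcing $g$ to fix an $(n-1)$-dimensional subspace of $A_1$ as well, i.e.\ forcing $g$ to be a quasi-reflection.

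The main obstacle I expect is the exceptional quasi-reflection from Theorem \ref{xxthm3.1}: there, $h$ is not a reflection on $A_1$ but still has quasi-reflection trace, so the eigenvalue argument above needs a separate analysis to ensure that a prime-power-order $g$ with $g^j$ equal to (a conjugate of) this exceptional element is forced to be a quasi-reflection and not, say, an element whose fixed ring fails to be regular. I would handle this by a direct case analysis: determine the possible orders and eigenvalue patterns of $g$ given that some power $g^j$ is the exceptional quasi-reflection, use the prime-power constraint to eliminate the bad patterns, and invoke the explicit structure of $A^g$ in the surviving cases. The rest—identifying the reflection type via traces, and concluding regularity of $A^g$ from the classification—is routine given the earlier results, particularly Theorems \ref{xxthm2.4} and \ref{xxthm3.1}.
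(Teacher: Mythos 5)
Your ``if'' direction matches the paper: one classifies the quasi-reflection via Theorem \ref{xxthm3.1} and checks regularity of $A^g$ in each case (Lemma \ref{xxlem5.1}(d), Lemma \ref{xxlem5.2}(d), Proposition \ref{xxprop4.3}). That part is fine, modulo writing out the checks.

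The forward direction has a genuine gap. Theorem \ref{xxthm2.4} only tells you that \emph{some} power $h=g^j$ is a quasi-reflection, and when $\langle h\rangle$ is a proper subgroup of $\langle g\rangle$ this constrains $g$ very weakly: raising a diagonalized automorphism to the $p^w$-th power collapses every eigenvalue of order dividing $p^w$ to $1$, so nothing in your ``eigenvalues of a generator must be compatible'' step rules out an automorphism acting on $A_1$ like $\mathrm{diag}(\zeta_{p^m},\zeta_p,1,\dots,1)$, whose $p$-th power is a reflection of $A_1$ while $g$ itself is not. Excluding this is the actual content of the theorem, and it requires using the regularity of $A^g$ a second time, not trace bookkeeping on $A_1$. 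Moreover, your proposed intermediate target --- show $g^{p^{m-1}}$ is a quasi-reflection and bootstrap up --- is false as stated: for a mystic reflection $g$ (order $4$, so $p=2$, $m=2$), Lemma \ref{xxlem4.2} gives $Tr_A(g^2,t)=[(1-t)^{n-2}(1+t)^2]^{-1}$, so $g^{p^{m-1}}=g^2$ is \emph{not} a quasi-reflection even though $g$ is; the paper's Example \ref{xxex2.3}(b) already exhibits this. The paper closes the gap by induction on $m$: after disposing of the mystic case (via the count of quasi-reflections in Lemma \ref{xxlem5.2}(c), which pins $|G|=4$) and the case of a unique quasi-reflection (which forces $|G|=2$), it produces a reflection $h=g^{p^w}$ of order $>2$ in $\langle g\rangle$, writes $A=C[b_1;\sigma]$ by Lemma \ref{xxlem5.1}, passes to the quantum polynomial ring $A'=A^{h}=C[b_1^{p^{m-w}};\sigma^{p^{m-w}}]$ with the induced action of $G'=G/\langle h\rangle$, observes $A^G=(A')^{G'}$, and applies the inductive hypothesis to conclude that the image of $g$ is a quasi-reflection of $A'$; only then does the eigenvalue argument force $\xi_j=1$ for all $j\ge 2$. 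Without this descent to the fixed subring of $h$ (or an equivalent second application of the hypothesis), your forward implication does not go through.
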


We conjecture that a full version of
Shephard-Todd-Chevalley theorem for noncommutative
Artin-Schelter regular algebras holds. Some further study
about reflection groups and a noncommutative version of
Shephard-Todd-Chevalley theorem will be reported in
\cite{KKZ1}.

\section{General preparations}
\label{sec1}

In this section we review some background and
collect some definitions that we will use in later
sections.

Throughout let $k$ be a commutative base field of
characteristic zero. We assume that $k$ is algebraically
closed for the convenience of our computation,
but this assumption is not necessary for most of
the results. All vector spaces, algebras and rings
are over $k$. The opposite ring of an algebra $A$ is
denoted by $A^{op}$.

Let $V$ be a finite dimensional vector space over $k$ and let $g$
be a linear transformation of $V$. We call $g$ a {\it reflection}
of $V$ if $\dim V^g\geq  \dim V-1$, where $V^g$ is the
$g$-invariant subspace of $V$. Such a $g$ is also called a {\it
pseudo-reflection} by many authors \cite[p.~24]{Be}. We have
dropped the prefix ``pseudo'' because we will introduce several
different kinds of reflections in this paper. Let $k[V]$ denote
the symmetric algebra on $V$ -- the polynomial ring in $n$
commuting variables where $n=\dim V$. The famous
Shephard-Todd-Chevalley theorem gives necessary and sufficient
conditions for the fixed ring of a polynomial ring to be a
polynomial ring (see \cite[Theorem 7.2.1]{Be}).

\begin{theorem}[Shephard-Todd-Chevalley theorem]
\label{xxthm1.1}
Suppose $G$ is a finite group acting faithfully on
a finite dimensional vector space $V$. Then the fixed
subring $k[V]^G$ is isomorphic to $k[V]$
if and only if $G$ is generated by reflections of $V$.
\end{theorem}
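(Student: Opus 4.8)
This is the classical Shephard--Todd--Chevalley theorem, so the plan is to reproduce a streamlined form of its standard proof (cf.\ \cite{Be}). Write $R=k[V]=\bigoplus_{d\ge0}R_d$ with $R_1=V^{*}$ and $n=\dim V$. Since $G$ is finite, $R$ is integral, hence module-finite, over $R^{G}$, so $\operatorname{trdeg}_k R^{G}=\operatorname{trdeg}_k R=n$; consequently an abstract algebra isomorphism $R^{G}\cong k[V]$ is equivalent to $R^{G}$ being a polynomial ring on $n$ homogeneous generators, and that is what I will prove. Since $\ch k=0$, the Reynolds operator $\pi(r)=\tfrac1{|G|}\sum_{g\in G}gr$ is a well-defined $R^{G}$-linear projection $R\to R^{G}$; it is used throughout.

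$(\Leftarrow)$ Suppose $G$ is generated by reflections; this is the substantive direction. The engine is \emph{Chevalley's lemma}: if $h_1,\dots,h_s\in R^{G}$ are homogeneous with $h_1\notin\sum_{i\ge2}h_iR^{G}$, then every homogeneous relation $\sum_i a_ih_i=0$ with $a_i\in R$ forces $a_1\in I:=R^{G}_{+}R$. This is proved by induction on $\deg a_1$: if $\deg a_1=0$, applying $\pi$ to the relation would otherwise put $h_1$ in $\sum_{i\ge2}h_iR^{G}$; for the inductive step, apply a reflection $g$, subtract, and divide out the linear form $\ell_g$ cutting out the reflecting hyperplane (using that $r-gr\in\ell_gR$ for every $r\in R$, since $r$ and $gr$ agree on the hyperplane fixed pointwise by $g$), apply the inductive hypothesis, run over all reflections --- hence over all of $G$, as $G=\langle\text{reflections}\rangle$ and $I$ is a $G$-stable ideal --- and average to get $a_1-\pi(a_1)\in I$ with $\pi(a_1)\in R^{G}_{+}\subseteq I$. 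Given the lemma, fix a minimal homogeneous generating set $f_1,\dots,f_m$ of $I$. Then $R^{G}=k[f_1,\dots,f_m]$: any homogeneous $z\in R^{G}_{+}$ lies in $I$, so $z=\sum f_ir_i$, and $\pi$ gives $z=\sum f_i\pi(r_i)$ with each $\pi(r_i)\in R^{G}$ of smaller degree, so one inducts on degree. Moreover the $f_i$ are algebraically independent: otherwise, among weighted-homogeneous relations $F(y_1,\dots,y_m)$ (with $y_i$ of weight $\deg f_i$) satisfying $F(f_\bullet)=0$, choose one of least degree, and set $h_i=(\partial F/\partial y_i)(f_\bullet)\in R^{G}$. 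Here $\ch k=0$ and minimality of $F$ force the $h_i$ not all zero; if some $h_i$ is a nonzero scalar then $F$ is linear in $y_i$ and $f_i\in\sum_{j\ne i}f_jR$, contradicting minimality of the generating set; otherwise, differentiating $F(f_\bullet)=0$ in each coordinate $x_\ell$ of $V$ produces relations $\sum_i h_i\,\partial f_i/\partial x_\ell=0$, and after rewriting the non-minimal $h_j$ as $R^{G}$-combinations of a minimal subset of the $h_i$, Chevalley's lemma together with Euler's identity $\sum_\ell x_\ell\,\partial f_i/\partial x_\ell=(\deg f_i)f_i$ again places some $f_i$ in $\sum_{j\ne i}f_jR$, a contradiction. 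Finally $m=\operatorname{trdeg}_k R^{G}=n$, so $R^{G}=k[f_1,\dots,f_n]$ is a polynomial ring in $n$ variables.

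$(\Rightarrow)$ Suppose $R^{G}=k[f_1,\dots,f_n]$ with $\deg f_i=d_i$. From Molien's formula $H_{R^{G}}(t)=\tfrac1{|G|}\sum_{g\in G}\det(1-tg)^{-1}$ and $H_{R^{G}}(t)=\prod_i(1-t^{d_i})^{-1}$, comparing the top two Laurent coefficients at $t=1$ gives $\prod_i d_i=|G|$ and $\sum_i(d_i-1)=N$, where $N$ is the number of reflections in $G$ (reflections pair off as $g\leftrightarrow g^{-1}$, with eigenvalue contributions $\tfrac1{1-\lambda}+\tfrac1{1-\lambda^{-1}}=1$). Let $H\trianglelefteq G$ be the subgroup generated by all reflections; it is normal since a conjugate of a reflection is a reflection. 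By the direction already proved $R^{H}=k[g_1,\dots,g_n]$, and the same computation gives $\prod_i e_i=|H|$ and $\sum_i(e_i-1)=N$ with $e_i=\deg g_i$; hence $\sum_i d_i=\sum_i e_i$. Since $R^{G}\subseteq R^{H}$ is a module-finite extension of graded polynomial rings, $R^{H}$ is a graded-free $R^{G}$-module (it is maximal Cohen--Macaulay over the regular ring $R^{G}$, hence of projective dimension $0$ by the graded Auslander--Buchsbaum formula), of rank $[G:H]$ by Galois theory of $k(V)/k(V)^{G}$. Therefore $p(t):=H_{R^{H}}(t)/H_{R^{G}}(t)=\prod_i(1-t^{d_i})/(1-t^{e_i})$ is a polynomial with $p(0)=1$ and $p(1)=[G:H]$; but $\deg p=\sum_i d_i-\sum_i e_i=0$, so $p\equiv1$, whence $[G:H]=1$ and $G=H$ is generated by reflections.

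The main obstacle is the forward direction $(\Leftarrow)$, and within it the algebraic independence of the minimal generators of the Hilbert ideal $I$: one must pin Chevalley's lemma down with exactly the hypotheses it needs, dispatch the degenerate cases of the Jacobian argument with care, and --- the one genuine use of $\ch k=0$ --- ensure the partial derivatives $\partial F/\partial y_i$ do not all vanish. By comparison $(\Rightarrow)$ is routine once Molien's formula and the normal reflection subgroup are in hand, the only non-formal input being the freeness of $R^{H}$ as an $R^{G}$-module.
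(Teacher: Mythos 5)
The paper does not actually prove this statement: Theorem \ref{xxthm1.1} is the classical Shephard--Todd--Chevalley theorem, quoted with a citation to \cite[Theorem 7.2.1]{Be}, so there is no internal argument to measure yours against. Your proposal is the standard textbook proof and is correct as a sketch. The $(\Leftarrow)$ direction (Chevalley's divisibility lemma, minimal homogeneous generators of the Hilbert ideal $R^G_{+}R$, the Jacobian/Euler argument for algebraic independence) is exactly the route of the cited reference. The $(\Rightarrow)$ direction --- extracting $\prod_i d_i=|G|$ and $\sum_i(d_i-1)=N$ from the two leading Laurent coefficients of Molien's series at $t=1$, then comparing $R^G$ with $R^H$ for the normal subgroup $H$ generated by all reflections via freeness of $R^H$ over $R^G$ --- is also standard, and it is worth observing that this is precisely the bookkeeping the paper abstracts to Artin--Schelter regular algebras in Lemma \ref{xxlem1.9} and Theorems \ref{xxthm2.4} and \ref{xxthm2.5}. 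Two spots deserve an extra line, though neither is a gap in substance. First, the hypothesis is only an \emph{abstract} isomorphism $R^G\cong k[V]$; before running $(\Rightarrow)$ you should record why $R^G$ is then a graded polynomial ring on $n$ \emph{homogeneous} generators: its graded maximal ideal $\mathfrak{m}$ satisfies $\dim_k\mathfrak{m}/\mathfrak{m}^2=n=\dim R^G$ because $R^G_{\mathfrak{m}}$ is regular local of dimension $n$, so $n$ homogeneous algebra generators exist and are algebraically independent by the transcendence-degree count. Second, at the end of the Jacobian argument Euler's identity literally gives $d_1f_1+\sum_{j>s}c_{j1}d_jf_j\in R_{+}I$, so $d_1f_1\in\sum_{j\neq 1}f_jR+f_1R_{+}$, and you must pass to the homogeneous component of degree $\deg f_1$ (a graded Nakayama step) to conclude $f_1\in\sum_{j\neq 1}f_jR$ and contradict minimality.
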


A finite group $G\subset GL(V)$ is called {\it a
reflection group} of $V$ if $G$ is generated by
reflections. When the base field is ${\mathbb R}$, a
reflection group is also called a {\it Coxeter group}.
Classifications of reflection groups over different
fields are given in \cite{Co,ShT,CE}.

There are noncommutative algebras that are not rigid, i.e. have
fixed subrings isomorphic to themselves. In fact, one can
construct an algebra $A$ and a group $G$ of automorphisms of $A$
so that $A^G$ is isomorphic to $A$ using any ring $R$ and any
graded automorphism $\sigma$ of $R$ with finite order using a skew
polynomial extension in the following way.

\begin{example}
\label{xxex1.2}
If $R$ is an algebra with an automorphism $\sigma$ of
order $n$, so that $\sigma^{n+1} = \sigma$, then if
we let $\xi$ be an $(n+1)$-st root of unity and extend
$\sigma$ to the skew polynomial extension
$A=R[z;\sigma]$ by $g|_R= Id_R$ and $g(z) = \xi z$,
then the fixed subring $A^G= R[z^{n+1};\sigma^{n+1}]
\cong A$ for $G = <g>$. We note that if $R$ is
Artin-Schelter regular (defined below) then so is $A$.
\end{example}

On the other hand, Alev-Polo's result [Theorem
\ref{xxthm0.1}] and results in \cite{Sm1,AP,Jo} suggest
that it is rare that a noncommutative ring $A$ is
isomorphic to a fixed subring $A^G$ for a finite group $G$.
The motivation for this paper is the following question.

\begin{question}
\label{xxque1.3}
Under what conditions on the algebra $A$ and the group
$G$, is $A$ isomorphic to $A^G$?
\end{question}

Our focus is on graded algebras and graded automorphisms
since some combinatorial structures of graded rings and
their fixed subrings can be used to study this problem.
It follows from the Shephard-Todd-Chevalley theorem that
the commutative graded polynomial ring $k[V]$ can be
isomorphic to its fixed subrings. Hence it is expected
that some version of Shephard-Todd-Chevalley theorem will
hold for ``somewhat commutative'' polynomial rings. We
will present some examples that illustrate this idea
[Examples \ref{xxex4.4}, \ref{xxex5.4} and \ref{xxex6.6}].

In the rest of this section we review some properties of
the Hilbert series of an algebra, the trace of an
automorphism, and Artin-Schelter regular algebras, as well
as some techniques from invariant theory that will be used
in this paper.

Throughout let $A$ be a connected graded algebra, namely,
$$A=k\oplus A_1\oplus A_2 \oplus \cdots$$
where each $A_i$ is finite dimensional and $A_iA_j
\subseteq A_{i+j}$ for all $i,j$. The Hilbert series of $A$
is defined to be the formal power series
$$H_A(t)\; =\; \sum_{i\geq 0} \dim A_i\;
t^i\in {\mathbb Z}[[t]].$$
The Hilbert series of a graded $A$-module is defined similarly.
Let $\Aut(A)$ be the group of graded algebra automorphisms
of $A$. For every $g\in \Aut(A)$, the trace of $g$ \cite{JiZ}
is defined to be
$$Tr_A(g,t)= \sum_{i\geq 0} tr (g|_{A_i}) \; t^i\in k[[t]].$$
It is obvious that $Tr_A(Id_A,t)=H_A(t)$, and
the converse is clearly true for $g$ of finite order when
$\operatorname{char} k=0$.  In the next section we will
define our generalization of the notion of a ``reflection" in
terms of the trace of the automorphism.

If $g$ has finite order then $Tr_A(g,t)$ is in
${\mathbb Q}(\zeta_n)[[t]]$ where ${\mathbb Q}(\zeta_n)$ is
the cyclotomic field generated by primitive $n$-th root of
unity, $\zeta_n=e^{2\pi i/n}$. For each integer $p$ such
that $(p,n)=1$, there is an automorphism of ${\mathbb Q}
(\zeta_n)/{\mathbb Q}$ determined by
$$\Xi_p: \zeta_n\to \zeta_n^p,$$
and the Galois group $G({\mathbb Q}(\zeta_n)/{\mathbb Q})$
is generated by the $\Xi_p$. One can easily extend $\Xi_p$
to an algebra automorphism of ${\mathbb Q}(\zeta_n) [[t]]
/{\mathbb Q}[[t]]$ by applying $\Xi_p$ to the coefficients.

\begin{lemma}
\label{xxlem1.4}
Let $g$ be a graded automorphism of $A$ of order $n$.
Then, for every $p$ coprime to $n$, $Tr_A(g^p,t)=\Xi_p(Tr_A(g,t))$.
In particular, if $g\in \Aut(A)$ is of finite order, then
$Tr_A(g^{-1},t)=\overline{Tr_A(g,t)}$, where for computational
purposes we assume $k\subset {\mathbb C}$ and $\overline{f}$ is
the series whose coefficients are complex conjugates of the
coefficients of $f$.
\end{lemma}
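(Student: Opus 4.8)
The plan is to reduce everything to a coefficient-by-coefficient statement and then invoke the standard fact that a finite-order linear operator over a field of characteristic zero is diagonalizable. First I would fix the automorphism $g$ of order $n$ and restrict to a single graded piece $A_i$, which is a finite dimensional $k$-vector space. Since $g|_{A_i}$ satisfies $X^n-1=0$ and this polynomial has distinct roots (as $\operatorname{char} k=0$), $g|_{A_i}$ is diagonalizable with eigenvalues that are $n$-th roots of unity, say $\zeta_n^{a_1},\dots,\zeta_n^{a_d}$ (with multiplicity) where $d=\dim A_i$. Then $\operatorname{tr}(g|_{A_i})=\sum_{j}\zeta_n^{a_j}$, and more generally $\operatorname{tr}(g^p|_{A_i})=\sum_{j}\zeta_n^{pa_j}=\Xi_p\!\left(\sum_j \zeta_n^{a_j}\right)=\Xi_p(\operatorname{tr}(g|_{A_i}))$, because $\Xi_p$ is a field automorphism of $\mathbb{Q}(\zeta_n)$ sending $\zeta_n\mapsto\zeta_n^p$ and hence sends any sum of $n$-th roots of unity to the corresponding sum with exponents multiplied by $p$.

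Having done this for each $i$, I would assemble the result: since $\Xi_p$ acts on $\mathbb{Q}(\zeta_n)[[t]]$ coefficientwise, and $Tr_A(g,t)=\sum_{i\ge 0}\operatorname{tr}(g|_{A_i})\,t^i$ has all coefficients in $\mathbb{Q}(\zeta_n)$ (indeed in $\mathbb{Z}[\zeta_n]$), we get
$$\Xi_p(Tr_A(g,t))=\sum_{i\ge 0}\Xi_p(\operatorname{tr}(g|_{A_i}))\,t^i=\sum_{i\ge 0}\operatorname{tr}(g^p|_{A_i})\,t^i=Tr_A(g^p,t),$$
which is the first assertion. (One small point to check is that $g^p$ has order $n/\gcd(p,n)=n$ when $\gcd(p,n)=1$, so $Tr_A(g^p,t)$ genuinely lives in $\mathbb{Q}(\zeta_n)[[t]]$, but this is immediate.)

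For the ``in particular'' statement, I would take $p\equiv -1\pmod n$, which is coprime to $n$, so $g^{-1}=g^p$ and $Tr_A(g^{-1},t)=\Xi_{-1}(Tr_A(g,t))$. Under the embedding $k\subset\mathbb{C}$ with $\zeta_n=e^{2\pi i/n}$, the automorphism $\Xi_{-1}$ sends $\zeta_n\mapsto\zeta_n^{-1}=\overline{\zeta_n}$, hence agrees with complex conjugation on $\mathbb{Q}(\zeta_n)$; applying it coefficientwise to the power series gives $\overline{Tr_A(g,t)}$, as claimed. I do not anticipate a genuine obstacle here; the only thing requiring a moment's care is justifying that $\Xi_p$ applied to a sum $\sum_j\zeta_n^{a_j}$ really yields $\sum_j\zeta_n^{pa_j}$ even when the $a_j$ repeat or when the sum collapses (e.g.\ equals a rational integer) — but this is automatic since $\Xi_p$ is a ring homomorphism, so it respects the sum regardless of any cancellation.
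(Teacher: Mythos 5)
Your proposal is correct and follows essentially the same route as the paper: diagonalize $g|_{A_i}$ (possible since $g$ has finite order and $\operatorname{char}k=0$), observe that the eigenvalues of $g^p|_{A_i}$ are the $p$-th powers of those of $g|_{A_i}$, and identify the resulting trace with $\Xi_p$ applied coefficientwise, with the inverse case handled via $\zeta^{-1}=\overline{\zeta}$. No discrepancies to report.
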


\begin{proof} We only need to show that
$tr (g^p|_{A_i})=\Xi_p(tr (g|_{A_i}))$ for all $i$.
Since $g$ has order $n$, it is diagonalizable. Let
$\{b_1,\cdots,b_q\}$ be a basis of $A_i$ such that
$$g(b_t)=\zeta_n^{w_t} b_t$$
for some integer $w_t$, for all $t=1,\cdots,q$.
For every $p$ coprime to $n$, $\Xi_p$ is an
automorphism and
$$g^p(b_t)=\zeta_n^{pw_t}b_t=\Xi_p(\zeta_n^{w_t}) b_t.$$
Hence $tr (g^p|_{A_i})=\Xi_p(tr (g|_{A_i}))$.  The second
part follows since for a root of unity $\zeta^{-1} =
\overline{\zeta}$.
\end{proof}

We will use this lemma when $Tr_A(g,t)$ is a rational
function, viewed as an infinite power series.

The Gelfand-Kirillov dimension of an algebra $A$ is
denoted by $\GKdim A$;  it is related to the rate of growth
of the graded pieces $A_n$ of $A$ (see \cite{KL}).
The commutative polynomial ring $k[x_1, \cdots, x_n]$
has $\GKdim =n$. The Gelfand-Kirillov dimension of an
$A$-module is defined similarly. Let $\uExt_A(M,N)$ be the
usual $\Ext$-group of graded $A$-modules $M$ and $N$
with ${\mathbb Z}$-grading as defined in \cite[p.240]{AZ}.

\begin{definition}
\label{xxdefn1.5}
A connected graded algebra $A$
is called {\it Artin-Schelter Gorenstein}
if the following conditions hold:
\begin{enumerate}
\item
$A$ has graded injective dimension $d<\infty$ on
the left and on the right,
\item
$\uExt^i_A(k,A)=\uExt^i_{A^{op}}(k,A)=0$ for all
$i\neq d$, and
\item
$\uExt^d_A(k,A)\cong \uExt^d_{A^{op}}(k,A)\cong k(l)$
for some
$l$.
\end{enumerate}
If in addition,
\begin{enumerate}
\item[(d)]
$A$ has finite (graded) global dimension, and
\item[(e)]
$A$ has finite Gelfand-Kirillov dimension,
\end{enumerate}
then $A$ is called {\it Artin-Schelter regular}
(or {\it regular} for short) of dimension $d$.
\end{definition}

Note that polynomial rings $k[x_1,x_2,\cdots,x_n]$
for $n\geq 0$, with $\deg x_i>0$, are Artin-Schelter
regular of dimension $n$, and these are the only
commutative Artin-Schelter regular algebras, so
Artin-Schelter regular algebras are natural
generalizations of commutative polynomial rings.

For (Artin-Schelter) regular algebras we can say more about
the trace of an automorphism.

\begin{lemma}
\label{xxlem1.6}
Let $A$ be regular and let $g\in \Aut(A)$.
\begin{enumerate}
\item
\cite[Theorem 2.3(4)]{JiZ} $Tr_A(g,t)$ is equal to
$1/e_g(t)$, where $e_g(t)$ is a polynomial in $k[t]$
with $e_g(0)=1$. We call $e_g(t)$ the {\it Euler polynomial}
of $g$.
\item
\cite[Proposition 3.1(3)]{StZ}
$H_A(t)=1/e(t)$ where $e(t)$ is an integral polynomial.
The polynomial $e(t)$ is called the {\it Euler polynomial}
of $A$. Furthermore $e(t)$ is a product of cyclotomic polynomials.
\item \cite[Corollary 2.2]{StZ}  The multiplicity of $t=1$
as a root of the Euler polynomial of $A$ is the $\GKdim A$.
\item
\cite[Theorem 3.1]{JiZ}
The polynomials $e(t)$ and $e_g(t)$ have the same degree.
\item
Suppose $g$ has finite order and $Tr_A(g,t)=e_g(t)^{-1}$.
Then the zeroes of the polynomial $e_g(t)$ are all
roots of unity.
\end{enumerate}
\end{lemma}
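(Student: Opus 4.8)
The plan is to combine parts (a) and (d) of this lemma with Lemma \ref{xxlem1.4} and a standard rationality/integrality argument. By part (a), $Tr_A(g,t) = e_g(t)^{-1}$ with $e_g(0)=1$, and by part (d) the degree of $e_g(t)$ equals the degree of the Euler polynomial $e(t)$ of $A$; in particular $e_g$ has some fixed degree $N$ independent of $g$. Let $n$ be the order of $g$. First I would observe that since $g$ is diagonalizable (char $k = 0$), the eigenvalues of $g$ on each $A_i$ are $n$-th roots of unity, so every coefficient of $Tr_A(g,t)$ lies in $\mathbb{Z}[\zeta_n]$, the ring of integers of the cyclotomic field $\mathbb{Q}(\zeta_n)$. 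Writing $Tr_A(g,t) = P(t)/Q(t)$ as a rational function in lowest terms over $\mathbb{Q}(\zeta_n)$, the denominator $Q(t)$ divides $e_g(t)$, and I want to show all roots of $e_g(t)$ (equivalently of $Q(t)$, since the ``extra'' factors contribute nothing to the reciprocal as a power series — more precisely $e_g$ and $Q$ have the same roots up to the issue of $P,Q$ sharing factors, which I'll address) are roots of unity.

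The key step is to apply the Galois action. For every $p$ coprime to $n$, Lemma \ref{xxlem1.4} gives $Tr_A(g^p, t) = \Xi_p(Tr_A(g,t))$, and $g^p$ also has finite order (dividing $n$), so by part (a) again $Tr_A(g^p,t) = e_{g^p}(t)^{-1}$ with $e_{g^p}(0) = 1$ and $\deg e_{g^p} = N$. Applying $\Xi_p$ to $Tr_A(g,t) = e_g(t)^{-1}$ shows $\Xi_p(e_g)(t)^{-1} = e_{g^p}(t)^{-1}$ as power series, hence $\Xi_p(e_g) = e_{g^p}$ (two polynomials with constant term $1$ whose reciprocal series agree must be equal). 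Thus the finite set of reciprocal roots $\{\alpha : e_g(\alpha) = 0\}$ is permuted, as a multiset, by the full Galois group $G(\mathbb{Q}(\zeta_n)/\mathbb{Q})$. Now I would argue: each root $\alpha$ of $e_g(t)$ is an algebraic number lying in $\mathbb{Q}(\zeta_n)$ (after adjoining roots — more carefully, the roots lie in a finite extension, but the Galois-stability statement already confines the situation), all of whose conjugates are again roots of some $e_{g^p}$; since all the $e_{g^p}$ have bounded degree $N$ and constant term $1$, the set of all such roots over all $p$ is finite, the product of each full conjugate set is $\pm 1$ (up to sign, from the constant terms being $1$), and a standard Kronecker-type argument — an algebraic integer all of whose conjugates, together with itself, have absolute value bounded and whose norm is $\pm 1$ — forces $1/\alpha$, hence $\alpha$, to be a root of unity.

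The cleaner route, and the one I would actually carry out, sidesteps Kronecker: since $g$ has order $n$, the eigenvalues of $g$ on $\bigoplus_i A_i$ are all $n$-th roots of unity, so $Tr_A(g,t) = \sum_{j} c_j \,(1 - \zeta_n^{j} t)^{-1} \cdot(\text{possibly with higher-order poles})$ — more precisely, grouping the graded trace as a sum over eigenvalue-bookkeeping one sees directly that $Tr_A(g,t)$ is a rational function whose only possible poles are at $t = \zeta_n^{-j}$, i.e. at inverse roots of unity. Hence its denominator $e_g(t)$, which by (a) is a genuine polynomial with $e_g(0) = 1$ and $Tr_A(g,t) e_g(t) = 1$, can only vanish at roots of unity. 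I expect the main obstacle to be making rigorous the claim that $Tr_A(g,t)$, as a rational function, has poles only at inverse $n$-th roots of unity: this requires knowing that the ``partial fraction'' structure coming from the eigenvalues is compatible with the global rationality from part (a), which in turn rests on the regularity of $A$ forcing enough finiteness. Once that is in hand, part (e) is immediate; the Galois argument above is then an alternative, self-contained packaging of the same fact using only (a), (d) and Lemma \ref{xxlem1.4}.
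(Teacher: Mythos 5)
There is a genuine gap, and it is the same gap in both of your routes: you never establish that the zeros of $e_g(t)$ have absolute value exactly $1$, and this is the one ingredient that cannot be extracted from parts (a), (d) and Lemma \ref{xxlem1.4} alone. The paper's proof imports it as a citation: by [JiZ, Proposition 3.3] every zero of $e_{g^p}(t)$ has absolute value $1$; only then does it form the Galois-invariant product $f(t)=\prod_{(p,n)=1}e_{g^p}(t)\in\mathbb{Z}[t]$ and invoke Kronecker's theorem. Your first route replaces ``all conjugates of absolute value $1$'' by ``absolute value bounded, with norm $\pm1$,'' and that version of Kronecker is false: the roots of $t^2-t-1$ are $\varphi=(1+\sqrt5)/2$ and $-1/\varphi$; their product is $-1$, they are bounded in absolute value, and neither is a root of unity. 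Kronecker's theorem needs every conjugate to satisfy $|\alpha|\le 1$ (equivalently $=1$ once the norm is a unit), so without that input the argument does not close.

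Your second, ``cleaner'' route founders on the step you yourself flag as the main obstacle, and the obstacle is fatal rather than technical: knowing that the eigenvalues of $g$ on each $A_i$ are $n$-th roots of unity tells you only that each coefficient of $Tr_A(g,t)$ is a sum of $n$-th roots of unity; it places no constraint whatsoever on where the poles of the rational function sit. Compare $\sum_i F_i t^i = t/(1-t-t^2)$ with $F_i$ the Fibonacci numbers: every coefficient is a sum of eigenvalues equal to $1$, yet the poles are at $(-1\pm\sqrt 5)/2$. Pole locations are governed by the growth of the coefficients, not by their arithmetic nature, which is exactly why the paper needs the regularity of $A$ through [JiZ, Proposition 3.3], and why the related Lemma \ref{xxlem1.7} is proved by a growth estimate. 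On the positive side, your Galois-product construction and the identity $\Xi_p(e_g)=e_{g^p}$ do match the paper's proof; what you must add is a proof or citation that $|\alpha|=1$ for every root $\alpha$ of each $e_{g^p}(t)$, after which your first route becomes exactly the paper's argument.
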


\begin{proof} Only the second assertion in (b) and (e) are new.

(b) By \cite[Corollary 2.2]{StZ}, all the zeroes of the
polynomial $e(t)$ appearing in Lemma \ref{xxlem1.6}(b) are
roots of unity. Since $e(t) \in \mathbb{Z}[t]$, therefore $e(t)$ is a
product of cyclotomic polynomials.

(e) Let $n$ be the order of $g$. Let $p$ be any
integer $0<p<n$ coprime to $n$. By Lemma \ref{xxlem1.4},
$Tr_A(g^p,t)=\Xi_p(Tr_A(g,t))$. Let $e_p(t)=(Tr_A(g^p,t))^{-1}$
for all $p$. By \cite[Proposition 3.3]{JiZ},
every zero of $e_p(t)$ has absolute value $1$.
Now let
$$f(t)=\prod_{(p,n)=1} e_p(t)=\prod_{(p,n)=1}
\Xi_p(Tr_A(g,t))^{-1}$$
where the notation $(p,n)=1$ means the set of integers
$p$ such that $0<p<n$ and that $p$ is coprime to $n$.
Since all coefficients of $\Xi_p(Tr_A(g,t))^{-1}$ are
in ${\mathbb Z}[\zeta_n]$, $f(t)\in
{\mathbb Z}(\zeta_n)[t]$. By the definition of
$f(t)$, $\Xi_p(f(t))=f(t)$. Since the coefficients of
$f(t)$ are fixed by all elements of the Galois group
$G({\mathbb Q}(\zeta_n)/{\mathbb Q})$ therefore
$f(t)\in~{\mathbb Q}[t]~\cap~{\mathbb Z}[\zeta_n][t]=
{\mathbb Z}[t]$. Since every zero of $f(t)$
is an algebraic integer with it and all its conjugates of
absolute value $1$, it follows from
\cite[Corollary 2.38, p.90]{Mo} that every
zero of $e(t)$ (and hence of $e_p(t)$) is a root of unity.
\end{proof}

Next we consider the multiplicity of $t=1$ as a root
of the Euler polynomial of a finite graded automorphism
$g$ of a regular domain $A$.  We show that this
multiplicity is bounded by the $\GKdim A$, and can be
equal to $\GKdim A$ only when $g$ is the identity
automorphism.

\begin{lemma}
\label{xxlem1.7}
Let $A$ be a connected graded finitely generated
algebra, and let $M$ be a graded finitely generated right
$A$-module of $\GKdim M = n$.  Let $g$ be a graded vector space
automorphism of $M$ that has finite order and $Tr_M(g,t) =
p(t)/q(t)$, where the roots of $q(t)$ are roots of unity. Then the
multiplicity of $1$ as a root of $q(t)$ is $\leq n$.
\end{lemma}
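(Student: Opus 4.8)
The plan is to bound $|Tr_M(g,t)|$ above by the Hilbert series $H_M(t)$ on the real interval $[0,1)$, to estimate $H_M(t)$ as $t\to1^-$ using the hypothesis $\GKdim M=n$, and finally to match the resulting growth bound against the order of the pole of the rational function $p(t)/q(t)$ at $t=1$. Fix an embedding into $\mathbb{C}$ of the cyclotomic field $\mathbb{Q}(\zeta_N)$ generated by the $N$-th roots of unity, where $N$ is the order of $g$; since $\GKdim M<\infty$ and $M$ is finitely generated, $\dim M_i$ grows polynomially in $i$, so $H_M(t)=\sum_{i\ge0}(\dim M_i)t^i$ and $Tr_M(g,t)=\sum_{i\ge0}tr(g|_{M_i})t^i$ both converge for $|t|<1$ and may be treated as functions of the real variable $t\in[0,1)$. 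Because $g$ has finite order and $\operatorname{char}k=0$, each $g|_{M_i}$ is diagonalizable with eigenvalues $N$-th roots of unity, so $|tr(g|_{M_i})|\le\dim M_i$; hence
$$|Tr_M(g,t)|\ \le\ \sum_{i\ge0}|tr(g|_{M_i})|\,t^i\ \le\ \sum_{i\ge0}(\dim M_i)\,t^i\ =\ H_M(t),\qquad 0\le t<1.$$

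Next I would estimate $H_M(t)$ near $t=1$. Put $s_k=\dim M_{\le k}=\sum_{i=0}^k\dim M_i$. Since $A$ and $M$ are finitely generated, a standard reformulation of Gelfand--Kirillov dimension (see \cite{KL}) gives $\GKdim M=\limsup_{k}\log_k s_k$, so for every $\varepsilon>0$ there is a constant $C$ with $s_k\le Ck^{n+\varepsilon}$ for all $k\ge1$. Combining the identity $\sum_{k\ge0}s_k t^k=H_M(t)/(1-t)$ with the elementary estimate $\sum_{k\ge1}k^{\alpha}t^k=O\big((1-t)^{-\alpha-1}\big)$ as $t\to1^-$, valid for any real $\alpha\ge0$ (compare the sum with $\int_0^\infty x^{\alpha}t^{x}\,dx=\Gamma(\alpha+1)(-\log t)^{-\alpha-1}$ and use $-\log t\ge 1-t$), gives $H_M(t)/(1-t)=O\big((1-t)^{-n-1-\varepsilon}\big)$, and therefore
$$H_M(t)=O\big((1-t)^{-n-\varepsilon}\big)\qquad\text{as }t\to1^{-}$$
for every $\varepsilon>0$. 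With the first display this yields $|Tr_M(g,t)|=O\big((1-t)^{-n-\varepsilon}\big)$ as $t\to1^-$, for every $\varepsilon>0$.

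Finally, I would read off $m$, the multiplicity of $1$ as a root of $q$, from the rational expression. We may assume $p(t)$ and $q(t)$ are coprime (the natural normalization). Write $q(t)=(1-t)^m q_1(t)$ with $q_1(1)\ne0$. If $m=0$ then $m\le n$ since $n=\GKdim M\ge0$. If $m\ge1$, then $p(1)\ne0$ (otherwise $1-t$ would be a common factor of $p$ and $q$), so $(1-t)^m Tr_M(g,t)=p(t)/q_1(t)\to p(1)/q_1(1)=:c\ne0$ as $t\to1^-$, whence $|Tr_M(g,t)|\sim|c|\,(1-t)^{-m}$. Comparing with the bound $|Tr_M(g,t)|=O\big((1-t)^{-n-\varepsilon}\big)$ forces $m\le n+\varepsilon$ for every $\varepsilon>0$, i.e.\ $m\le n$, as claimed.

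The one step that requires care is the exponent in the Hilbert-series estimate: the crude bound $\dim M_i=O(i^{n+\varepsilon})$ only gives $H_M(t)=O\big((1-t)^{-n-1-\varepsilon}\big)$ and hence the weaker conclusion $m\le n+1$. Recovering the extra power of $(1-t)$ depends on working with the partial sums $s_k=\dim M_{\le k}$ — which is precisely the quantity controlled by $\GKdim M$ — via $\sum_k s_k t^k=H_M(t)/(1-t)$, and on using the sharp pole order $\alpha+1$ of $\sum_k k^{\alpha}t^k$ at $t=1$ for real exponents $\alpha$, rather than rounding $\alpha$ up to an integer. The remaining ingredients — diagonalizability of $g|_{M_i}$, the trace bound $|tr|\le\dim$, and the pole analysis of $p/q$ — are routine.
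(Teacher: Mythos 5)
Your proof is correct, but it runs in the opposite direction from the paper's. The paper argues Tauberian-style: it takes the partial-fraction decomposition of $Tr_M(g,t)$ over its poles $\zeta^{s}$ on the unit circle, extracts the coefficient asymptotics $m_i=\bigl(\sum_s c_{s,p}\zeta^{si}\bigr)\,i^{p-1}/(p-1)!+\cdots$, checks that the oscillating leading coefficients do not all vanish, and then uses $h_i\ge |m_i|$ to force $\dim M_{\le k}\gtrsim k^{p}$ and hence $\GKdim M\ge p$, contradicting $p\ge n+1$. You argue Abelian-style: bound the generating function itself by $|Tr_M(g,t)|\le H_M(t)$ on $[0,1)$, convert $\dim M_{\le k}\le Ck^{n+\varepsilon}$ into $H_M(t)=O\bigl((1-t)^{-n-\varepsilon}\bigr)$ via $\sum_k s_k t^k=H_M(t)/(1-t)$, and read off the pole order at $t=1$ from the rational expression. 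Your route is more elementary --- it needs no partial fractions and no argument that the leading quasi-polynomial coefficients survive cancellation --- and it does not even use the hypothesis that the roots of $q$ other than $1$ are roots of unity; the paper's route proves slightly more, namely that every pole of $Tr_M(g,t)$ on the unit circle has order at most $n$, not only the pole at $t=1$. Both proofs rest on the same two external inputs: $|tr(g|_{M_i})|\le \dim M_i$ from finite order in characteristic zero, and the characterization $\GKdim M=\limsup_k\log_k\dim M_{\le k}$ for finitely generated graded modules. Your device of passing to the partial sums $s_k$ (rather than bounding $\dim M_i$ itself) is exactly what recovers the extra factor of $(1-t)$ that the naive estimate would lose, and you execute it correctly; the only normalization to flag --- which the paper also makes implicitly by speaking of the order of the pole --- is that $p/q$ must be taken in lowest terms, since otherwise the stated multiplicity of $1$ in $q$ is not well defined.
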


\begin{proof}
Assume to the contrary that the multiplicity of $1$ as a
root of $q(t)$ is $\geq n+1$. Let $H_M(t) = \sum h_i t^i$
be the Hilbert series of $M$, and let $Tr_M(g,t) = \sum m_i t^i$
be the trace function of $g$ on $M$.  We note that $|m_i|
\leq h_i$ for all $i$ since $g$ has finite order so that the
eigenvalues of $g$ are roots of unity.  As in the proof of
\cite[Proposition 2.21]{ATV2}, let $p$ be the highest order
of any pole of $Tr_M(g,t)$, and express all roots of $q(t)$ as
powers of a primitive $N$th root of unity $\zeta$. We have
$$Tr_M(g,t) = \sum_{s,j}\frac{c_{s,j}}{(1-\zeta^st)^j} + f(t)$$
where $s = 0 ,\ldots, N, j = 1 ,\ldots, p$, and
for sufficiently large $i$ we have
$$\begin{aligned}
m_i &= \sum_{s,j} c_{s,j} \left(
\begin{array}{c}
i+j-1 \\j-1
\end{array}
\right) \zeta^{si}\\
&= \left( \sum_{s} c_{s,p} \zeta^{si} \right)
\frac{i^{p-1}}{(p-1)!} + \text{ terms of lower degree in $i$},
\end{aligned}
$$
and the coefficients cycle through $N$ polynomials.  Not all
the $c_{s,p}$ are zero, so the leading coefficients
$\sum c_{s,p} \zeta^{si}$ are not all zero.  Hence there is
a subsequence $m_{i_0+Ni}$ with $|m_{i_0+Ni}| \geq K(Ni)^{p-1}$
for all $i\geq 1$, for some constant $K$. Since
$h_i\geq |m_i|$ for all $i$,
$$\sum_{j\leq i_0+Ni} h_i\geq \sum_{s\leq i} |m_{i_0+Ns}|
\geq KN^{p-1} \sum_{s\leq i} s^{p-1}\geq K' i^{p}
\geq K'' (i_0+Ni)^p$$
for some $K', K''>0$. Since $A$ is a finitely generated
algebra and $M$ is a finitely generated $A$-module,
$\GKdim M\geq p$. This contradicts to the fact that
$\GKdim M=n$ and $p\geq n+1$.
\end{proof}

\begin{proposition}
\label{xxprop1.8}
Let $A$ be a regular domain. If $g\in \Aut(A)$ has
finite order, and if its Euler polynomial has $t=1$ as a
root of multiplicity equal to the $\GKdim A$, then $g$ is
the identity.
\end{proposition}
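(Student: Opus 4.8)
The plan is to show that, under the hypotheses, the Euler polynomial $e_g(t)$ of $g$ must equal the full Euler polynomial $e(t)$ of $A$, and hence that $Tr_A(g,t) = H_A(t)$, which in characteristic zero forces $g = \id_A$. First I would recall from Lemma \ref{xxlem1.6}(a) that $Tr_A(g,t) = 1/e_g(t)$ for a polynomial $e_g(t)$ with $e_g(0)=1$, and from Lemma \ref{xxlem1.6}(b),(c) that $H_A(t) = 1/e(t)$ where $e(t)$ is a product of cyclotomic polynomials with $t=1$ occurring as a root of multiplicity exactly $\GKdim A =: n$. By Lemma \ref{xxlem1.6}(d), $\deg e_g(t) = \deg e(t)$, and by Lemma \ref{xxlem1.6}(e) every root of $e_g(t)$ is a root of unity. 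The hypothesis says $t=1$ occurs as a root of $e_g(t)$ with multiplicity $\geq n$; combined with Lemma \ref{xxlem1.7} applied to $M = A$ (so $\GKdim A = n$), the multiplicity is exactly $n$, matching that of $e(t)$.

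Next I would compare the two polynomials more carefully using the constraint $|\,\operatorname{tr}(g|_{A_i})\,| \leq \dim A_i$ for all $i$, i.e. the coefficientwise domination $|[t^i]\,Tr_A(g,t)| \leq [t^i]\,H_A(t)$, which holds because $g$ has finite order and its eigenvalues are roots of unity. Writing $Tr_A(g,t) = 1/e_g(t)$ and $H_A(t)=1/e(t)$, and using that both have a pole of order exactly $n$ at $t=1$, I would argue that the leading singular behaviour at $t=1$ agrees: expanding near $t=1$, the coefficient $c_{0,n}$ in the partial-fraction expansion of $Tr_A(g,t)$ has the same modulus as the corresponding coefficient for $H_A(t)$, since otherwise the domination $|m_i| \le h_i$ would be violated along a suitable subsequence (this is the same growth-estimate mechanism used in the proof of Lemma \ref{xxlem1.7}). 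That forces $e(t)$ and $e_g(t)$ to have the same value of $\lim_{t\to 1}(1-t)^n/e_\bullet(t)$ up to the correct constant, and in particular $e_g(1)$-type normalizations match. Iterating this comparison at the other cyclotomic factors — or, more cleanly, invoking that the domination of power series with the equality of the top-order pole and equal degrees pins down $e_g = e$ — gives $Tr_A(g,t) = H_A(t)$.

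Finally, once $Tr_A(g,t) = H_A(t)$, since $g$ has finite order and $\ch k = 0$, averaging (or equivalently comparing traces of the finite-order operator $g|_{A_i}$ with the full dimension $\dim A_i$, which forces every eigenvalue of $g|_{A_i}$ to be $1$) yields $g|_{A_i} = \id$ for every $i$, hence $g = \id_A$.

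The main obstacle I anticipate is the middle step: upgrading the matching of the pole order at $t=1$ to the matching of the entire polynomial $e_g(t) = e(t)$. Matching multiplicities at $t=1$ is immediate from Lemma \ref{xxlem1.7}, but ruling out that $e_g(t)$ trades, say, a factor of $(1-t)$ beyond the count — impossible since the multiplicity is exactly $n$ on both sides — versus genuinely different cyclotomic factors elsewhere requires care. The cleanest route is probably to avoid a factor-by-factor induction and instead argue directly: the estimate in the proof of Lemma \ref{xxlem1.7} shows that if the order-$n$ pole at $t=1$ had a strictly smaller leading coefficient for $Tr_A(g,t)$ than for $H_A(t)$, then $H_A(t) - Tr_A(g,t)$ (a power series with nonnegative coefficients, since each coefficient is $\dim A_i - \operatorname{tr}(g|_{A_i}) = \dim(\text{non-trivial eigenspaces}) \geq 0$) would itself have a pole of order $n$ at $t=1$, giving $\GKdim$ of the ``coinvariant'' module at least $n$ — fine — but then a closer look at whether the difference can have nonnegative coefficients while $1/e_g$ and $1/e$ have the same degree denominators forces the leading coefficients to agree, and then a descending induction on the pole order of $H_A(t) - Tr_A(g,t)$ (each step: nonnegativity plus a growth bound) drives the difference to zero. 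I would present this as a short lemma-style argument rather than grinding the partial fractions explicitly.
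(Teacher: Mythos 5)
Your plan --- force $Tr_A(g,t)=H_A(t)$ and then read off $g=\id$ from the eigenvalues --- has sound endpoints (the first reduction via Lemma \ref{xxlem1.7} and the final step are fine), but the middle step, upgrading ``$e_g(t)$ and $e(t)$ have the same degree and the same order of vanishing at $t=1$'' to ``$e_g(t)=e(t)$'', is a genuine gap, and it is exactly the step you flag as the obstacle. Two concrete problems. First, $H_A(t)-Tr_A(g,t)$ does \emph{not} have non-negative coefficients: its $i$-th coefficient is $\dim A_i-\tr(g|_{A_i})=\sum_\lambda(1-\lambda)$, summed over the eigenvalues of $g|_{A_i}$, which is in general a complex number (a primitive fourth root of unity $\lambda$ contributes $1-\lambda\notin{\mathbb R}$); it is not ``the dimension of the non-trivial eigenspaces.'' So the positivity that is supposed to drive your descending induction is absent; at best one has non-negative real parts after symmetrizing with $g^{-1}$. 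Second, even granting the averaging argument at $t=1$ (which does work: Ces\`aro averaging kills the oscillating contributions of the other poles of maximal order), the domination $|m_i|\le h_i$ only yields the one-sided inequality $|q(1)|\ge p(1)$, where $e(t)=(1-t)^np(t)$ and $e_g(t)=(1-t)^nq(t)$. That does not determine $q$: e.g.\ $p(t)=1+t^2$ has $p(1)=2$ while $q(t)=(1+t)^2$ has $q(1)=4$, so nothing at the level of the Laurent coefficients at $t=1$ excludes such a $q$, and you give no actual mechanism for matching the remaining cyclotomic factors. A further warning sign is that your argument never uses the hypothesis that $A$ is a domain.

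The paper's proof sidesteps all of this. If $g\ne\id$, choose a homogeneous $x\in A$ with $g(x)=\lambda x$, $\lambda\ne1$, and set $M=A/xA$. Since $A$ is a domain, $xA\cong A(-\deg x)$ as a graded vector space with $g$ acting through multiplication by $\lambda$, so $Tr_M(\bar g,t)=(1-\lambda t^{\deg x})\,Tr_A(g,t)$; the factor $1-\lambda t^{\deg x}$ does not vanish at $t=1$, so $Tr_M(\bar g,t)$ still has a pole of order $n=\GKdim A$ there. But $\GKdim M<\GKdim A$ because $M$ is a proper quotient of a noetherian domain, contradicting Lemma \ref{xxlem1.7} applied to $M$. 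The domain hypothesis is used twice and is essential. To salvage your route you would have to prove $e_g=e$ honestly, and I do not see how to extract that from the trace domination alone.
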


\begin{proof}
Suppose that the Euler polynomial of $g$ has $t=1$ a root
of multiplicity equal to $\GKdim A$, but that $g$ is not the
identity.  Then $g$ has an eigenvalue $\lambda \neq 1$
and an element $x \in A$ with $g(x) = \lambda x$.  Let $M =
A/xA$ and let $\bar{g}$ be the induced graded vector space
automorphism of $M$.  Then
$$Tr_{M}(\bar{g},t) = (1 - \lambda t) Tr_A(g,t),$$
and the order of the pole of $Tr_M(\bar{g},t)$ at $t=1$ is
equal to the order of the pole of $Tr_A(g,t)$ at $t=1$,
which is by assumption the $\GKdim A$.  But $\GKdim A >
\GKdim M$ by \cite[Proposition 8.3.5]{MR}, contradicting
Lemma \ref{xxlem1.7}.
\end{proof}

Associated to a graded automorphism $g$ of an Artin-Schelter
Gorenstein algebra $A$ is a constant $\hdet_A g$ defined by
J{\o}rgensen-Zhang \cite{JoZ}, and the map
$\hbox{hdet}_A:{\Aut}(A) \rightarrow k^{\times}$
 defines a group homomorphism.  It follows from
\cite[Lemma 2.6 and Theorem 4.2]{JoZ} that when $A$ is a
regular algebra then the $\hbox{hdet } g$ can be computed
from the trace of $g$: since $Tr_A(g,t)$ is a rational
function in $t$ it can be written as a Laurent series in
$t^{-1}$, and we can write
\begin{equation}
\label{1.8.1}
\hbox{Tr}_{A}(g,t) = (-1)^{d}(\hbox{hdet } g)^{-1} t^{-l} +
\hbox{ lower degree terms},
\tag{1.8.1}
\end{equation}
where $d$ and $l$ are as in Definition \ref{xxdefn1.5}(d).
By \cite[Theorem 3.3]{JoZ} if $G$ is a finite group of
graded automorphisms acting on an Artin-Schelter
Gorenstein ring $A$, and if the homological determinant
of $g$ satisfies  $\hdet g =1$ for all $g \in G$, then
the fixed subring $A^{G}$ is Artin-Schelter Gorenstein.

Let $e(t)=a_n t^n+a_{n-1}t^{n-1}+\cdots a_1 t+a_0$ be an
integral polynomial with $a_0=1$. We say $e(t)$ is a
{\it palindrome polynomial} if $a_{n-i}=a_i$ for all $i$
and a {\it skew palindrome polynomial} if $a_{n-i}=-a_i$
for all $i$. If $e(t)$ is a skew palindrome polynomial,
then $e(1)=0$. Any polynomial which is a product of
cyclotomic polynomials is either a palindrome or a skew
palindrome polynomial.

\begin{lemma}
\label{xxlem1.9}
Let $e(t)$ be a palindrome polynomial of
degree $n$. Then $e'(1)=n e(1)/2$, where $e'(t)$ is the
derivative of $e(t)$.
\end{lemma}

\begin{proof} First we suppose $n$ is odd and let $m=
(n-1)/2$. Since $a_{n-i}=a_i$ for all $i$,
$$e(t)=\sum_{i=0}^n a_i t^i=\sum_{i=0}^m a_i(t^i+t^{n-i}).$$
Then
$$e(1)=\sum_{i=0}^m a_i (1+1)=2\sum_{i=0}^m a_i$$
and
$$e'(1)=\sum_{i=0}^m a_i(i 1^{i-1}+(n-i)1^{n-i-1})=
n\sum_{i=0}^m a_i={\frac{n}{2}}\; 2 \sum_{i=0}^m a_i=
{\frac{n}{2}}\; e(1).$$

If $n$ is even, let $f(t)=e(t)(1+t)$. Then $f(t)$ is a
palindrome polynomial of even degree. By the above proof,
the assertion holds for $f(t)$. Using $f(t)=e(t)(1+t)$ we
see that
$$e'(1)2+e(1)=f'(1)={\frac{n+1}{2}} \; f(1)
=(n+1) e(1),$$
which implies that $e'(1)=n e(1)/2$.
\end{proof}

The following two lemmas are well-known. We say a subring
$B$ of $A$ is {\it cofinite} if $A_B$ and $_BA$
are finite $B$-modules.

\begin{lemma}
\label{xxlem1.10}
Suppose $A$ is a graded algebra of finite global dimension
and $B$ is a graded subring of $A$.
\begin{enumerate}
\item
If $_BA$ is free, then $B$ has
finite global dimension. If $_BA$ is finitely
generated, then $\gldim A=\gldim B$.
\item
If $\gldim B=\gldim A$ and $_BA$ is finitely generated,
then $_BA$ is free.
\item
Suppose $B$ is a cofinite subring of $A$ with $\gldim B<\infty$.
If $A=B\oplus C$ as $B$-bimodule,
then $A$ is regular if and only if $B$ is.
\item
If $A$ is noetherian and  regular and $B$
is a factor ring of $A$ with finite global dimension,
then $B$ is regular.
\item
If  $\deg y=1$, then $A$ is noetherian and regular if and
only if $A/(y)$ is.
\end{enumerate}
\end{lemma}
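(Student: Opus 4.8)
\smallskip
\noindent\emph{Proof idea.}
All five parts reduce to change-of-rings arguments, using throughout that for a connected graded algebra $C$ one has $\gldim C=\pdim_C k=\pdim_{C^{op}}k$, and that a nonzero finite-dimensional graded module over a connected graded algebra of finite global dimension $d$ has projective dimension exactly $d$ (because $\Tor^C_d(k,-)$ is nonzero on every composition factor and injects across short exact sequences in its top degree). For (a): since $A_0=B_0=k$, a homogeneous basis of $_BA$ has exactly one generator in degree $0$, so $B$ is a graded direct summand of $_BA$; when $_BA$ is free an $A$-free resolution is a $B$-free resolution of the same length, giving $\gldim B\le\gldim A<\infty$, and the reverse inequality, when $_BA$ is finitely generated, is the standard fact that a module-finite extension preserves global dimension once it is finite (see e.g.\ \cite{MR}); equivalently one pushes a minimal $B$-resolution of $k$ across the extension and invokes the finite-dimensional-module fact. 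For (b), if $\gldim B=\gldim A$ but $_BA$ is not free then $\pdim_B A=m\ge1$, and in the spectral sequence
$$E^2_{p,q}=\Tor^A_p\bigl(\Tor^B_q(k,A),k\bigr)\ \Longrightarrow\ \Tor^B_{p+q}(k,k)$$
the corner term $E^2_{\gldim A,\,m}$ is nonzero --- it is the top $\Tor$ of the nonzero finite-dimensional module $\Tor^B_m(k,A)$ against $k$ --- and admits no nonzero differentials in or out, so it survives to give $\Tor^B_{\gldim A+m}(k,k)\ne0$, i.e.\ $\gldim B>\gldim A$, a contradiction; hence $_BA$ is projective, and a finitely generated graded projective over a connected graded ring is free.

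For (c): applying (a)--(b) and their right-hand analogues to the one-sided module-finite ring $A=B\oplus C$ gives $\gldim A=\gldim B$, and $\GKdim A=\GKdim B$ because $A$ is module-finite over $B$, so the content is the transfer of the Artin--Schelter Gorenstein symmetry. Here the bimodule splitting does the work: restriction together with the $B$-bimodule projection $A\to B$ lets one realize $\uExt_A(k,A)$ as a direct summand of $\uExt_B(k,A)=\uExt_B(k,B)\oplus\uExt_B(k,C)$, and a relative $\Ext$ spectral sequence, analyzed by the same top/bottom-corner survival argument as in (b), shows that the relevant $\uExt$ is concentrated in a single degree and is one-dimensional there for $A$ exactly when the corresponding statement holds for $B$; the Laurent-expansion formula \ref{1.8.1} tracks the degree shift $l$.

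For (d): since $\gldim A<\infty$ we have $\pdim_A B<\infty$, and the change-of-rings spectral sequence
$$E_2^{p,q}=\uExt^p_B\bigl(k,\uExt^q_A(B,A)\bigr)\ \Longrightarrow\ \uExt^{p+q}_A(k,A)$$
has abutment $k(l)$ in total degree $d$ and $0$ otherwise. Its extreme corners $(\gldim B,\,q_{\max})$ and $(0,\,q_{\min})$ survive (no differentials), forcing $q_{\min}=q_{\max}=d-\gldim B$; the spectral sequence degenerates, and $N:=\uExt^{d-\gldim B}_A(B,A)$ satisfies $\uExt^p_B(k,N)=0$ for $p\ne\gldim B$ and $\cong k(l)$ for $p=\gldim B$. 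Finiteness of $\gldim B$ then forces $N$ to be an invertible bimodule shift of $B$, which fed back through the degenerate spectral sequence is precisely the Artin--Schelter Gorenstein condition for $B$; with $\GKdim B\le\GKdim A<\infty$ this proves (d). (Equivalently: transfer the balanced dualizing complex $A(l)[d]$ of the regular algebra $A$ to the module-finite factor $B$ and use finite global dimension to recognize it as a twist of $B[\gldim B]$.) Part (e) is the special case in which $y\in A_1$ is a normal nonzerodivisor and $\pdim_A A/(y)=1$: the bimodule sequence $0\to A(-1)\xrightarrow{\,y\,}A\to A/(y)\to0$ shifts global dimension, the single nonvanishing $\uExt$-degree, and the parameter $l$ by one in either direction; $H_{A/(y)}(t)=(1-t)H_A(t)$ keeps the pole at $t=1$ of finite order so $\GKdim$ stays finite; and noetherianity descends to the factor and lifts back by filtering by powers of $y$.

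I expect the genuine obstacle to be the transfer of the Artin--Schelter \emph{Gorenstein} property in (c) and (d): finite global dimension alone does not imply the Gorenstein symmetry, so one must really use module-finiteness together with the bimodule splitting (in (c)) or the factor-ring structure (in (d)), and the technical heart is the corner-survival analysis of the relevant change-of-rings spectral sequence --- everything else is bookkeeping with Hilbert series and homological determinants.
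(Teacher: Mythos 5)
First, a point of comparison: the paper states Lemma \ref{xxlem1.10} without any proof, labelling it ``well-known,'' so there is no argument of the authors' to measure yours against; what follows is an assessment of your sketch on its own terms. Your treatments of (a), (b) and (e) are in substance the standard arguments and are essentially correct, granting the implicit hypotheses under which the lemma is actually applied in the paper: noetherianity (so that the minimal free resolution of ${}_BA$ has finitely generated terms and $\Tor^B_q(k,A)$ is indeed finite-dimensional, which your corner argument in (b) needs), and, in (e), that $y$ is a regular normal element, without which the statement is false.

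The genuine gap is in (c) and (d), exactly at the point you yourself flag as the obstacle: the transfer of the Artin--Schelter Gorenstein symmetry. In (c), the assertion that $\uExt_A(k,A)$ is a direct summand of $\uExt_B(k,A)$ is unjustified: the natural comparison map is an edge homomorphism of a change-of-rings spectral sequence, and the $B$-bimodule projection $A\to B$ is not a map of $A$-modules, so it induces no splitting of that map. In (d), the corner-survival argument fails as stated: for the corners $E_2^{0,q_{\min}}=\uHom_B(k,\uExt^{q_{\min}}_A(B,A))$ and $E_2^{\gldim B,\,q_{\max}}=\uExt^{\gldim B}_B(k,\uExt^{q_{\max}}_A(B,A))$ to be nonzero you would need the coefficient modules to be finite-dimensional (or at least torsion), and $\uExt^q_A(B,A)$ typically is not --- already for $B=A/(y)$ with $y$ regular normal it is a shift of $B$ itself, and then $\uHom_B(k,B)=0$ whenever $B$ has positive depth, so the bottom corner vanishes, while nonvanishing of the top corner is essentially the Gorenstein property of $B$ that you are trying to prove. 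Nothing in the argument rules out $q_{\min}<q_{\max}$ with cancellation inside the spectral sequence. The standard way to close this, and the one the paper itself uses in the analogous arguments it does write out (the proofs of Proposition \ref{xxprop3.5}(b) and Lemma \ref{xxlem1.11}(d)), is via local cohomology: $B$ inherits the $\chi$-condition from $A$ by \cite[Corollary 8.4]{AZ} (as a cofinite subring, respectively a noetherian factor ring), so $\uExt^i_B(k,B)$ is finite-dimensional, and \cite[Theorem 1.2]{Z} then converts finite injective (or global) dimension plus $\chi$ into the Artin--Schelter Gorenstein condition; finiteness of $\GKdim$ passes across the finite extension or the surjection, giving regularity. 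Your parenthetical remark about transferring the balanced dualizing complex points at this mechanism, but it is the mechanism rather than an aside, and it needs to be carried out.
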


Here is a list of well-known facts about
fixed subrings.

\begin{lemma}
\label{xxlem1.11}
Let $A$ be a noetherian connected graded algebra
and let $G$ be a finite subgroup of $\Aut(A)$.
\begin{enumerate}
\item
\cite[Corollaries 1.12 and 5.9]{Mon1}
$A^G$ is noetherian and $A$ is finite over $A^G$
on the left and on the right. As a consequence, $\GKdim
A=\GKdim A^G$.
\item
\cite[Corollary 1.12]{Mon1}
$A=A^G\oplus C$ as $A^G$-bimodule.
\item
(Molien's theorem) \cite[Lemma 5.2]{JiZ}
$$H_{A^G}(t)={\frac{1}{|G|}}\sum_{g\in G}
Tr_A(g,t).$$
\item
Suppose $A$ and $A^G$ are both regular, then
$\gldim A=\gldim A^G$ and $A$ is free over $A^G$
on both sides.
\end{enumerate}
\end{lemma}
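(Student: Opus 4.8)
The plan is to handle the four parts in turn, since each is a short assembly of the cited results with the earlier lemmas. For (a) and (b) I would quote Montgomery's monograph and only indicate the mechanism: because $\operatorname{char} k = 0$ (it is enough that $|G|$ be invertible in $k$), the averaging operator $\varepsilon := \frac{1}{|G|}\sum_{g\in G} g$ is a grading-preserving $A^G$-bimodule map $A \to A^G$ that restricts to the identity on $A^G$. It is therefore an idempotent, and $A = A^G \oplus C$ with $C = \ker \varepsilon$, which is (b). For (a), \cite[Corollaries 1.12 and 5.9]{Mon1} give that $A$ is module-finite over $A^G$ on both sides and that $A^G$ is noetherian, and then $\GKdim A = \GKdim A^G$ follows since a finite module extension does not change Gelfand--Kirillov dimension (see \cite{KL} or \cite[Ch.~8]{MR}).

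For (c) I would compute one graded component at a time. Fixing $i \geq 0$, the operator $\varepsilon$ restricts to an idempotent endomorphism of the finite-dimensional space $A_i$ with image $(A^G)_i = A_i^G$, and the trace of an idempotent is the dimension of its image, so
\[
\dim (A^G)_i \;=\; \tr(\varepsilon|_{A_i}) \;=\; \frac{1}{|G|}\sum_{g\in G}\tr(g|_{A_i}).
\]
Multiplying by $t^i$ and summing over $i$ yields $H_{A^G}(t) = \frac{1}{|G|}\sum_{g\in G} Tr_A(g,t)$.

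For (d), assume $A$ and $A^G$ are both regular; then both have finite global dimension. By (a), $_{A^G}A$ is finitely generated and $A^G$ is a graded subring of $A$, so Lemma \ref{xxlem1.10}(a) gives $\gldim A = \gldim A^G$; since this common value is finite and $_{A^G}A$ is finitely generated, Lemma \ref{xxlem1.10}(b) forces $_{A^G}A$ to be free. Applying the same argument to $A^{op}$, whose fixed ring is $(A^G)^{op}$ and which is again regular, shows $A_{A^G}$ is free as well.

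I do not expect any real obstacle: the argument is bookkeeping around the two citations to \cite{Mon1}, the only genuinely used inputs being the characteristic-zero hypothesis (which makes $\varepsilon$ available and idempotent) and Lemma \ref{xxlem1.10}(a)--(b) for part (d). The one point worth stating explicitly is that $\varepsilon$ preserves the grading --- immediate because every $g \in G$ is a graded automorphism --- so that the bimodule splitting in (b) and the dimension count in (c) are compatible with the grading.
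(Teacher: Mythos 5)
Your treatments of parts (a), (b) and (c) are fine and consistent with what the paper does: (a) and (b) are exactly the citations to \cite{Mon1} together with the averaging (Reynolds) operator $\varepsilon=\frac{1}{|G|}\sum_{g\in G}g$, and your proof of (c) --- the trace of the idempotent $\varepsilon|_{A_i}$ equals $\dim (A^G)_i$ --- is the standard proof of Molien's theorem recorded in \cite{JiZ}. The remark that $\varepsilon$ is graded because each $g$ is a graded automorphism is the right thing to say.

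Part (d), however, has a genuine gap. You invoke Lemma \ref{xxlem1.10}(a) in the form ``$_{A^G}A$ finitely generated $\Rightarrow$ $\gldim A=\gldim A^G$'', but that implication is only available under the hypothesis of the first sentence of Lemma \ref{xxlem1.10}(a), namely that $_{A^G}A$ is \emph{free}; read as a standalone implication it is false (take $B=k[x^2,x^3]\subset A=k[x]$: here $_BA$ is finitely generated and $\gldim A=1$, yet $\gldim B=\infty$). Since you obtain freeness only afterwards, from Lemma \ref{xxlem1.10}(b) --- which itself takes the equality $\gldim A=\gldim A^G$ as input --- your argument is circular. The paper closes exactly this gap by a different route: for a noetherian regular algebra one has $\gldim=\cd+1$, where $\cd$ is the cohomological dimension of $\Proj$ \cite[Theorem 8.1(4)]{AZ}; the inequality $\cd(A)\le\cd(A^G)$ is \cite[Corollary 8.4(1)]{AZ}, and the reverse inequality is extracted from the bimodule splitting $A=A^G\oplus C$ of part (b) together with \cite[Theorem 8.3(3)]{AZ}, so that $\cd(A)=\cd(A^G)$ and hence $\gldim A=\gldim A^G$. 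Only then is Lemma \ref{xxlem1.10}(b) applied to conclude that $A$ is free over $A^G$ on both sides. Note that the splitting from (b) is an essential input here, not mere bookkeeping; your proposal never uses it in part (d), which is a sign that the key step is missing.
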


\begin{proof} (d) For any noetherian regular algebra
$A$ of global dimension $d$, we have ${\rm{cd}} (A)=d-1$
\cite[Theorem 8.1(4)]{AZ}, where
$${\rm{cd}}(A)={\rm{cd}}(\Proj A)=\max \{i\;|\;
{\underline{\rm{H}}}^i_{\Proj A}
(A)\neq 0\}$$
(see \cite[p.272 and p.276]{AZ} for the definitions).
By \cite[Corollary 8.4(1)]{AZ},
${\rm{cd}}(A)\leq {\rm{cd}}(A^G)$. Since $A=A^G\oplus C$
as $A^G$-bimodule (see part (b)),
$$
\begin{aligned}
{\rm{cd}}(A)
&=\max\{i\;|\; {\underline{\rm{H}}}^i_{\Proj A} (A)\neq 0\}\\
&=\max\{i\;|\; {\underline{\rm{H}}}^i_{\Proj A} (A^G\oplus C)\neq 0\}\\
&=\max\{i\;|\; {\underline{\rm{H}}}^i_{\Proj A^G} (A^G\oplus C)\neq 0\}
\qquad {\rm [AZ, Theorem~8.3(3)]}\\
&\geq \max\{i\;|\; {\underline{\rm{H}}}^i_{\Proj A^G} (A^G)\neq 0\}\\
&={\rm{cd}} (A^G).
\end{aligned}
$$
Therefore ${\rm{cd}}(A)={\rm{cd}}(A^G)$ and
$$\gldim A={\rm{cd}}(A)+1={\rm{cd}}(A^G)+1=\gldim A^G.$$
The rest follows from Lemma \ref{xxlem1.10}(b).
\end{proof}

\begin{definition}
\label{xxdefn1.12} Let $A$ be a connected graded
algebra. If $A$ is a noetherian, regular graded
domain of global dimension $n$ and $H_A(t)=(1-t)^{-n}$,
then we call $A$ {\it a quantum polynomial ring}
of dimension $n$.
\end{definition}

By \cite[Theorem 5.11]{Sm2}, a quantum polynomial ring is
Koszul and hence it is generated in degree 1. The
GK-dimension of a quantum polynomial ring of global
dimension $n$ is $n$. In general if $A$ is finitely
generated and $H_A(t)=((1-t)^np(t))^{-1}$ for some polynomial
$p(t)$ with $p(1)\neq 0$, then the GK-dimension of $A$ is
equal to $n$. A quantum polynomial ring of dimension 2
is isomorphic to either:

(i) $k_{q}[x,y]:=k\langle x,y\rangle/(xy-qyx)$ for some
$0\neq q\in k$, or

(ii) $k_{J}[x,y]:=k\langle x,y\rangle/(xy-yx-x^2)$.

\noindent
Quantum polynomial rings of dimension 3 were classified in
\cite{ASc,ATV1}. There are many examples of quantum
polynomial rings of higher dimensions, but their
classification has not been completed yet.

\section{Quasi-reflections}
\label{sec2}

The Shephard-Todd-Chevalley theorem suggests that if a
fixed subring $A^G$ of a regular algebra $A$ is
still regular, then $G$ is some kind of a reflection
group. In the commutative case the reflection is defined
on the generating space of $k[V]$. In the noncommutative
case, this becomes a complicated issue as many examples
indicate. The following easy fact (see \cite[(1-1)]{JiZ})
suggests one possible definition of reflection.

\begin{lemma}
\label{xxlem2.1}
Let $V$ be a vector space of dimension $n$. Let $g$ be
a linear transformation of $V$ of finite order, extending
to an algebra automorphism of $A:=k[V]$. Then $g$ is a
reflection of $V$ if and only if there is $\xi \in k$ with
$$Tr_A(g,t)={\frac{1}{(1-t)^{n-1}(1-\xi t)}}.$$
\end{lemma}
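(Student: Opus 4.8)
The plan is to reduce the statement to the classical product formula for the trace of a diagonalizable automorphism on a polynomial ring, and then argue by comparison of polynomials. Since $g$ has finite order and $\operatorname{char} k=0$, $g$ is diagonalizable on $V$; fix a basis $e_1,\dots,e_n$ of $V$ with $g(e_i)=\lambda_i e_i$, where each $\lambda_i$ is a root of unity. Note that $g$ is invertible as a linear map on $V$, so $\lambda_i\neq 0$ for all $i$. Extending $g$ to $A=k[V]=k[e_1,\dots,e_n]$, the degree-$d$ monomials $e_1^{a_1}\cdots e_n^{a_n}$ form an eigenbasis of $A_d$, with eigenvalue $\prod_i\lambda_i^{a_i}$, so that
$$Tr_A(g,t)=\sum_{d\geq 0}\Big(\sum_{a_1+\cdots+a_n=d}\prod_{i=1}^n\lambda_i^{a_i}\Big)t^d=\prod_{i=1}^n\frac{1}{1-\lambda_i t}.$$

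For the forward implication, suppose $g$ is a reflection of $V$, so $\dim V^g\geq n-1$ and, after renumbering, $\lambda_1=\cdots=\lambda_{n-1}=1$. Putting $\xi:=\lambda_n$, the product formula immediately gives $Tr_A(g,t)=1/\bigl((1-t)^{n-1}(1-\xi t)\bigr)$; here $\xi=1$ is permitted and corresponds to $g=\operatorname{id}$.

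For the converse, assume $Tr_A(g,t)=1/\bigl((1-t)^{n-1}(1-\xi t)\bigr)$ for some $\xi\in k$. Equating this with the product formula and clearing denominators yields the identity $\prod_{i=1}^n(1-\lambda_i t)=(1-t)^{n-1}(1-\xi t)$ in $k[t]$. The left side has degree exactly $n$ since all $\lambda_i\neq 0$, so $\xi\neq 0$ as well; as $k[t]$ is a unique factorization domain, the two sides have the same multiset of roots. Since $t=1$ is a root of the right side with multiplicity at least $n-1$, it is such a root of the left side too, forcing at least $n-1$ of the reciprocals $\lambda_i^{-1}$ — equivalently at least $n-1$ of the $\lambda_i$ — to equal $1$. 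Hence $\dim V^g\geq n-1$ and $g$ is a reflection of $V$.

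Each step is routine, so I do not anticipate a genuine obstacle. The only points worth a word of care are the appeal to diagonalizability (justified by $\operatorname{char} k=0$ together with the finite order of $g$) and the observation that an automorphism of $A=k[V]$ restricts to an invertible linear map on $V=A_1$, so that none of the $\lambda_i$, and hence $\xi$, can vanish.
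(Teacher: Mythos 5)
Your proof is correct. The paper gives no argument for this lemma, simply citing the product formula $Tr_A(g,t)=\prod_i(1-\lambda_i t)^{-1}$ from \cite{JiZ}; your write-up supplies exactly that formula together with the routine comparison of factorizations, so it is essentially the same (standard) approach, with the minor points about diagonalizability and $\xi\neq 0$ handled correctly.
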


By Lemma \ref{xxlem1.7} and Proposition \ref{xxprop1.8},
we have seen that if $g \neq 1$ is a finite order graded
automorphism of a regular algebra $A$, then  the
order of the pole at $t=1$ in $Tr_A(g,t)$ must be strictly
less than the order of the pole at $t=1$ in $H_A(t)$, which
is the $\GKdim A$.  We will call those graded automorphisms
whose trace has a pole at $t=1$ of order $\GKdim A -1$
quasi-reflections.

\begin{definition}
\label{xxdefn2.2}
Let $A$ be a regular graded algebra such that
$$H_A(t) =\frac{1}{(1-t)^{n}p(t)}$$
where $p(1)\neq 0$.
Let $g$ be a graded algebra automorphism of $A$.
We say that $g$ is a {\it quasi-reflection} of $A$ if
$$Tr_A(g,t)={\frac{1}{(1-t)^{n-1}q(t)}}$$
for $q(1) \neq 0$. If $A$ is a quantum polynomial ring,
then $H_A(t)=(1-t)^{-n}$. In this case $g$ is a
{\it quasi-reflection} if and only if
$$Tr_A(g,t)={\frac{1}{(1-t)^{n-1}(1-\xi t)}}$$
for some $\xi \neq 1$. (Note that we have chosen not
to call the identity map a quasi-reflection).
\end{definition}

The next example shows that if we use the definition of
a ``reflection" from the commutative case then the
condition that $G$ is generated by ``reflections" is
neither necessary nor sufficient for the fixed subring
of a noncommutative quantum polynomial ring to be regular.

\begin{example}
\label{xxex2.3}
Let $A$ be the regular algebra $k\langle x, y
\rangle/(x^2-y^2)$. This is a quantum polynomial ring
and is isomorphic to $k_{-1}[b_1,b_2]$.

(a) Let $h$ be the automorphism of  $A$ determined by
$$h(x)=-x, \quad\text{and}\quad h(y)=y.$$
Hence $h$ is a reflection of the generating space
$A_1:=kx\oplus ky$. Since $A$ has a $k$-linear basis
\begin{equation}
\label{2.3.1}
\{(yx)^i y^j\;|\;
i,j\geq 0\}\cup \{ x(yx)^i y^j\;|\; i,j\geq 0\},
\tag{2.3.1}
\end{equation}
we can compute the trace easily:
$$Tr_A(h,t)=\frac{1}{1+t^2},\quad
Tr_A(Id,t)=\frac{1}{(1-t)^2}.$$
By definition, $h$ is not a quasi-reflection.
Furthermore, the fixed subring $A^h$ is not
regular because its Hilbert series is
$$H_{A^h}(t)=\frac{1}{2}(Tr_A(h,t)+Tr_A(Id,T))=\frac{1-t+t^2}
{(1-t)^2(1+t^2)}\neq {\frac{1}{p(t)}}.$$
However, by \cite[Theorem 6.4 or Theorem 3.3]{JoZ},
$A^h$ is Artin-Schelter Gorenstein.

To summarize, there is an automorphism $h$ of $A$ with order 2
such that $h|_{A_1}$ is a reflection, but $h$ is not a
quasi-reflection and the fixed subring $A^h$ has infinite
global dimension. Consequently, $A^h\not\cong A$.
If we believe that a reflection of $A$ should give
rise to a regular fixed subring as in the
Shephard-Todd-Chevalley theorem, then we should not think
of $h$ as a reflection of $A$.

(b) Let $g$ be the automorphism of $A$ determined by
$$g(x)=i x, \quad\text{and}\quad g(y)=- i y.$$
Hence $g|_{A_1}$ is not a reflection (and neither is $g^2|_{A_1}$).
Using the $k$-linear basis in (2.3.1),
we can compute the trace easily:
$$Tr_A(g,t)=Tr(g^3,t)=\frac{1}{1-t^2},\quad Tr_A(g^2,t)=
\frac{1}{(1+t)^2}, \quad Tr_A(Id,t)=\frac{1}{(1-t)^2}.$$
So $g$ is a quasi-reflection, but $g^2$ is not.

Using the $k$-linear basis above again, one can check that
$A^g=k[xy,yx]\cong k[t,s]$. Hence $A^g$ is regular
(although not isomorphic to $A$). But $A^{g^2}$ is
not regular by a Hilbert series computation.

To summarize, there is a quasi-reflection $g$ such
that $g|_{A_1}$ is not a reflection. Since the fixed
subring $A^g$ is regular, we should think $g$ as
a reflection. On the other hand, the automorphism
$g^2$ is not a quasi-reflection and $A^{g^{2}}$ is not
regular. So we should not think $g^2$ as a reflection.
This phenomenon is very quite different from
the commutative case (where the square of a reflection
is a reflection), and it conflicts with our intuition.
\end{example}

Next we prove some general results relating quasi-reflections
to the regularity of the fixed rings.  The theorem below
justifies our definition of quasi-reflection.

\begin{theorem}
\label{xxthm2.4}
Let $A$ be noetherian and regular.
Let $G$ be a finite subgroup of $\Aut(A)$. If $A^G$ has
finite global dimension, then $G$ contains a quasi-reflection.
\end{theorem}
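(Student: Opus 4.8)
The plan is to argue by contrapositive via a Hilbert-series/trace computation, combined with the characterization of $\GKdim$ in terms of the order of the pole at $t=1$. Suppose $G$ contains no quasi-reflection. We want to show $A^G$ does not have finite global dimension. First I would set $n=\GKdim A = \GKdim A^G$ (the equality is Lemma \ref{xxlem1.11}(a)), which by Lemma \ref{xxlem1.6}(c) equals the multiplicity of $t=1$ as a root of the Euler polynomial $e(t)$ of $A$; equivalently, $H_A(t)$ has a pole of order exactly $n$ at $t=1$. By Proposition \ref{xxprop1.8}, for every non-identity $g\in G$ the Euler polynomial $e_g(t)$ of $g$ has $t=1$ as a root of multiplicity $\le n-1$, i.e. $Tr_A(g,t)$ has a pole of order $\le n-1$ at $t=1$; and because $G$ contains no quasi-reflection, in fact this pole order is $\le n-2$ for every $g\ne \mathrm{id}$ (a pole of order exactly $n-1$ at $t=1$ would, after writing $Tr_A(g,t)=1/e_g(t)$ with $e_g$ a product of cyclotomic polynomials by Lemma \ref{xxlem1.6}(e), force $g$ to be a quasi-reflection — this is where I would need to check that ``pole of order $n-1$ at $t=1$'' together with $e_g(t)$ being a product of cyclotomic polynomials of the same degree as $e(t)$ forces $e_g(t)=(1-t)^{n-1}q(t)$ with $q(1)\ne 0$, which is immediate from the factorization).

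Next I would apply Molien's theorem, Lemma \ref{xxlem1.11}(c):
$$H_{A^G}(t)=\frac{1}{|G|}\sum_{g\in G} Tr_A(g,t)=\frac{1}{|G|}\,H_A(t)+\frac{1}{|G|}\sum_{g\ne \mathrm{id}} Tr_A(g,t).$$
The first summand contributes a pole of order exactly $n$ at $t=1$, and every term in the second sum has a pole of order $\le n-2$ at $t=1$, so no cancellation can kill the leading $(1-t)^{-n}$ term: $H_{A^G}(t)$ has a pole of order exactly $n$ at $t=1$. On the other hand, if $A^G$ had finite global dimension then, being a fixed subring of a noetherian algebra it is noetherian (Lemma \ref{xxlem1.11}(a)) and has finite GK-dimension (again $\GKdim A^G=n<\infty$), so $A^G$ would be regular; but then $\gldim A^G=\gldim A$ and $A$ would be free over $A^G$ on both sides by Lemma \ref{xxlem1.11}(d), and more to the point Lemma \ref{xxlem1.6}(b) would apply to $A^G$, forcing $H_{A^G}(t)=1/e_G(t)$ with $e_G(t)$ a product of cyclotomic polynomials whose multiplicity of $t=1$ equals $\GKdim A^G=n$ by Lemma \ref{xxlem1.6}(c). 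That is consistent with a pole of order $n$, so I must extract the contradiction more carefully: the point is that $\deg e_G(t)=\deg e(t)$ would be forced by Lemma \ref{xxlem1.6}(d) applied to $A^G$, yet the explicit Molien sum shows $H_{A^G}(t)$ is a proper rational function with poles away from $t=1$ coming from the non-identity terms of pole order possibly up to $n-2$ but with nontrivial numerator, so the natural contradiction is dimensional/degree-theoretic rather than just about the pole at $t=1$.

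Let me therefore reorganize around the cleaner invariant. I would instead directly compare $e_G(t):=1/H_{A^G}(t)$ with $e(t):=1/H_A(t)$. If $A^G$ is regular, Lemma \ref{xxlem1.6}(b),(d) give that $e_G(t)$ is an integral polynomial, a product of cyclotomic polynomials, of degree $\deg e(t)$. From Molien's formula, $|G|\,H_A(t)^{-1}$ would then have to divide appropriately — more usefully, clearing denominators, $|G|/H_{A^G}(t)=e(t)\big/\big(1+\sum_{g\ne\mathrm{id}} e(t)Tr_A(g,t)\big)$, and $e(t)Tr_A(g,t)=e(t)/e_g(t)$ is a ratio of products of cyclotomic polynomials. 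The honest contradiction comes from orders of vanishing at a suitable root of unity other than $1$: the argument of \cite[Proposition 2.21]{ATV2} as used in Lemma \ref{xxlem1.7} shows each non-identity $Tr_A(g,t)$ has some pole of order $\le$ the pole order at the relevant place, but I expect the slick route is exactly the pole-at-$1$ comparison after all, using that a regular $A^G$ forces the pole at $t=1$ to have order $n$ AND that $H_{A^G}(t)(1-t)^n$ is regular and nonzero at $t=1$ with value $1/(|G|\cdot(\text{leading coeff}))$ — matching $1/e_G(t)$ forces $e_G(1)$-type normalizations. The main obstacle, and the step I would spend the most care on, is precisely this: converting ``$H_{A^G}$ has a pole of order exactly $n$ at $t=1$ and equals a Molien sum with strictly lower-order poles off the diagonal'' into the statement ``$1/H_{A^G}(t)$ is not a polynomial,'' which is what finite global dimension would demand via Lemma \ref{xxlem1.6}(b). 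Equivalently: the non-identity terms $Tr_A(g,t)=1/e_g(t)$ have poles (as rational functions) at roots of unity $\zeta\ne 1$ that are not cancelled in $\sum_g Tr_A(g,t)$, so $H_{A^G}(t)$ has a pole somewhere that the putative polynomial $e_G(t)$ cannot produce — unless one shows such cancellation is impossible, which is the crux and is where the hypothesis ``no quasi-reflection'' combined with $G$ being a group (so closed under the Galois twists $\Xi_p$ of Lemma \ref{xxlem1.4}) gets used to control the pole structure.
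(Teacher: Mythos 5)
Your setup is the same as the paper's: assume no quasi-reflection, invoke Molien's theorem, and observe that every non-identity $g\in G$ has $Tr_A(g,t)$ with a pole at $t=1$ of order at most $n-2$ (by Lemma \ref{xxlem1.7}, Proposition \ref{xxprop1.8}, and the definition of quasi-reflection), so the leading part of the Laurent expansion of $H_{A^G}(t)$ at $t=1$ comes entirely from $\frac{1}{|G|}H_A(t)$. But you then correctly notice that the order-$n$ pole alone gives no contradiction, and at that point the proof stalls: your candidate escape routes (degree counts on $e_G(t)$, poles at roots of unity other than $1$, Galois twists) are never carried through, and you end by naming the crux rather than resolving it. So there is a genuine gap.

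The missing idea is to use the \emph{second} Laurent coefficient at $t=1$, not just the leading one, together with two facts you have available but do not combine. First, by Lemma \ref{xxlem1.11}(d), $A$ is free and finite over a regular $A^G$, so $H_A(t)=f(t)H_{A^G}(t)$ for a polynomial $f$ with nonnegative integer coefficients; writing $H_A=\bigl((1-t)^np(t)\bigr)^{-1}$ and $H_{A^G}=\bigl((1-t)^nq(t)\bigr)^{-1}$ this gives $q=pf$ with $\deg f>0$ (for nontrivial $G$), i.e.\ $\deg q>\deg p$. Second, $p$ and $q$ are products of cyclotomic polynomials, hence palindromes, and Lemma \ref{xxlem1.9} converts the logarithmic derivative at $1$ into the degree: $2p'(1)=(\deg p)\,p(1)$ and $2q'(1)=(\deg q)\,q(1)$. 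Matching the coefficients of $(1-t)^{-n}$ and $(1-t)^{-(n-1)}$ in the two expressions for $H_{A^G}(t)$ yields $q(1)=|G|p(1)$ and $\frac{q'(1)}{q(1)^2}=\frac{1}{|G|}\frac{p'(1)}{p(1)^2}$, which after applying Lemma \ref{xxlem1.9} forces $\deg q=\deg p$ --- contradicting $\deg f>0$. Without this step (or some substitute), the argument as you have written it does not close; in particular the speculation about uncancelled poles at roots of unity $\zeta\neq 1$ is not needed and is not how the paper finishes.
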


\begin{proof} We show that the assumption that $G$ does
not contain a quasi-reflection leads to a contradiction.

Since $A$ is regular, the Hilbert series of $A$ is
$$H_A(t)=\frac{1}{(1-t)^{n}p(t)}$$
with $p(1)\neq 0$, where $n=\GKdim A$.

By Lemma \ref{xxlem1.11}(a), $A^G$ is noetherian and $A$ is finite
over $A^G$ on the left and the right, and $\GKdim A=\GKdim A^G$.
Since $A^G$ has finite global dimension, the Hilbert series of
$A^G$ is of the form
$$H_{A^G}(t)={\frac{1}{e(t)}}=
{\frac{1}{(1-t)^n q(t)}}.$$ By Lemma \ref{xxlem1.11}(d),
$A$ is free finite over $A^G$. Hence $H_A(t)=f(t) H_{A^G}(t)$
for some polynomial $f(t)$ with non-negative integer coefficients.
Clearly $q(t)=p(t)f(t)$. Let $m=\deg p(t)$ and $n=\deg q(t)$.
Then $n-m=\deg f(t)>0$.

Expanding $H_A(t)$ into a Laurent series about $t=1$ we have
$$H_A(t)={\frac{1}{(1-t)^n}}\;{\frac{1}{p(1)}}+
{\frac{1}{(1-t)^{n-1}}}\;{\frac{p'(1)}{p^2(1)}}+\cdots
\text{higher degree terms}.$$
Similarly,
$$H_{A^G}(t)={\frac{1}{(1-t)^n}}\;{\frac{1}{q(1)}}+
{\frac{1}{(1-t)^{n-1}}}\;{\frac{q'(1)}{q^2(1)}}+\cdots
\text{higher degree terms}.$$ From Molien's theorem [Lemma
\ref{xxlem1.11}(c)], we have that
$$H_{A^G}(t)={\frac{1}{|G|}}\sum_{g\in G} Tr_A(g,t).$$
If we expand this expression into a Laurent series around
$t=1$, since $G$ does not contain any quasi-reflections,
by Lemma \ref{xxlem1.7} and Proposition \ref{xxprop1.8}
the Laurent series of $Tr(g,t)$ has lowest possible degree term
$(1-t)^{-(n-2)}$. Hence the first terms of the
sum come entirely from the trace of the identity
$Tr_A(Id_A,t)=H_A(t)$. Hence
$$H_{A^G}(t)={\frac{1}{|G|}}
\big[ {\frac{1}{(1-t)^n}}\;{\frac{1}{p(1)}}+
{\frac{1}{(1-t)^{n-1}}}\;{\frac{p'(1)}{p^2(1)}}+\cdots
\text{higher degree terms}\big].$$
Equating coefficients in the two expressions for $H_{A^G}(t)$
we have that
$$q(1)=|G|p(1),\quad \text{and}\quad {\frac{q'(1)}{q(1)^2}}
={\frac{1}{|G|}}\; {\frac{p'(1)}{p(1)^2}}.$$
Since $p(t)$ and $q(t)$ are products of cyclotomic
polynomials, they are palindrome polynomials. By Lemma \ref{xxlem1.9},
$$2p'(1)=mp(1),\quad \text{and}\quad 2q'(1)=nq(1).$$
Hence we have
\[\frac{q'(1)}{(q(1))^2} = \frac{n}{2q(1)} \;\; \text{  and  } \;\; \frac{p'(1)}{|G| (p(1))^2}
= \frac{m}{2|G|(p(1))}  = \frac{m}{2q(1)},\] and so
\[\frac{n}{2q(1)} = \frac{m}{2q(1)}\]
gives $n = m$, a contradiction.
\end{proof}

The number of quasi-reflections in $G$ can also be computed.

\begin{theorem}
\label{xxthm2.5}
Suppose $A$ is a quantum polynomial ring, and
let $G$ be a finite
subgroup of $\Aut(A)$. Denote the number of quasi-reflections
in $G$ by $r$.
\begin{enumerate}
\item
If $H_{A^G}(t)$ is expanded into a Laurent series around $t=1$,
then the coefficient of $(1-t)^{-(n-1)}$ is given by
$r/2|G|$.
\item
Suppose $A^G$ is regular and $H_{A^G}(t)=((1-t)^{n}q(t))^{-1}$.
Then $q(1)=|G|$ and $r=\deg q(t)$.
\end{enumerate}
\end{theorem}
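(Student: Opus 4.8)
The plan is to expand Molien's theorem, $H_{A^G}(t)=\tfrac{1}{|G|}\sum_{g\in G}Tr_A(g,t)$ [Lemma~\ref{xxlem1.11}(c)], as a Laurent series about $t=1$ and to match the coefficients of $(1-t)^{-n}$ and $(1-t)^{-(n-1)}$ against those read directly off $H_{A^G}(t)$. First I would sort $G$ by the order of the pole of $Tr_A(g,t)$ at $t=1$: since $A$ is a regular domain with $\GKdim A=n$ and every $g\in G$ has finite order, $Tr_A(g,t)=1/e_g(t)$ with all zeroes of $e_g$ roots of unity [Lemma~\ref{xxlem1.6}(a),(e)], so that pole order is the multiplicity of $1$ as a root of $e_g$; it is $\le n$ [Lemma~\ref{xxlem1.7}], it equals $n$ only for $g=Id$ [Proposition~\ref{xxprop1.8}], and since $\deg e_g=\deg e=n$ [Lemma~\ref{xxlem1.6}(d)], a non-identity $g$ whose pole order at $t=1$ is exactly $n-1$ must have $e_g(t)=(1-t)^{n-1}(1-\xi t)$ with $\xi\neq1$, i.e.\ $g$ is a quasi-reflection. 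Hence a $g\neq Id$ that is not a quasi-reflection has pole order $\le n-2$ at $t=1$, so in $\sum_g Tr_A(g,t)$ only $Id$ (contributing $Tr_A(Id,t)=H_A(t)=(1-t)^{-n}$) affects the $(1-t)^{-n}$ coefficient, while only $Id$ and the quasi-reflections affect the $(1-t)^{-(n-1)}$ coefficient.

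Next, for a single quasi-reflection $g$, substituting $u=1-t$ in $Tr_A(g,t)=\big((1-t)^{n-1}(1-\xi t)\big)^{-1}$ and writing $1-\xi t=(1-\xi)\big(1+\tfrac{\xi}{1-\xi}u\big)$ shows its leading Laurent term is $\tfrac{1}{1-\xi}\,(1-t)^{-(n-1)}$, with no $(1-t)^{-n}$ term. To evaluate $\sum_{g}\tfrac{1}{1-\xi_g}$ over the set $Q$ of quasi-reflections, I would use that $Q$ is stable under $g\mapsto g^{-1}$ with parameter $\xi_{g^{-1}}=\xi_g^{-1}$: indeed $Tr_A(g^{-1},t)=\overline{Tr_A(g,t)}$ [Lemma~\ref{xxlem1.4}] and $\xi_g$, being a root of unity [Lemma~\ref{xxlem1.6}(e)], satisfies $\overline{\xi_g}=\xi_g^{-1}\neq1$. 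Since $\tfrac{1}{1-\xi}+\tfrac{1}{1-\xi^{-1}}=1$ for every root of unity $\xi\neq1$ (the involutive case $g=g^{-1}$ merely forcing $\xi_g=-1$, $\tfrac{1}{1-\xi_g}=\tfrac12$), summing this identity over $g\in Q$ and reindexing gives $2\sum_{g\in Q}\tfrac{1}{1-\xi_g}=|Q|=r$. Therefore the $(1-t)^{-(n-1)}$ coefficient of $H_{A^G}(t)$ is $\tfrac{1}{|G|}\cdot\tfrac r2=\tfrac{r}{2|G|}$, proving (a); its $(1-t)^{-n}$ coefficient is $\tfrac1{|G|}$.

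For (b), regularity of $A^G$ gives $H_{A^G}(t)=1/e_{A^G}(t)$ with $e_{A^G}$ a product of cyclotomic polynomials [Lemma~\ref{xxlem1.6}(b)] having $t=1$ as a root of multiplicity $\GKdim A^G=\GKdim A=n$ [Lemma~\ref{xxlem1.6}(c), Lemma~\ref{xxlem1.11}(a)]. Thus $e_{A^G}(t)=(1-t)^nq(t)$ with $q(1)\neq0$, and $q$, being a product of cyclotomic polynomials $\Phi_m$ with $m\ge2$, is a palindrome polynomial with $q(0)=1$; put $d=\deg q$. Expanding $\big((1-t)^nq(t)\big)^{-1}$ about $t=1$ gives $(1-t)^{-n}$-coefficient $1/q(1)$ and $(1-t)^{-(n-1)}$-coefficient $q'(1)/q(1)^2$. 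Comparing with the previous paragraph forces $q(1)=|G|$ and $q'(1)/q(1)^2=r/(2|G|)$, while Lemma~\ref{xxlem1.9} gives $q'(1)=\tfrac d2q(1)$, so $q'(1)/q(1)^2=d/(2q(1))=d/(2|G|)$; hence $d=r$.

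I expect the only delicate point to be the bookkeeping ending the first paragraph, namely verifying that no non-identity, non-quasi-reflection element contributes to the $(1-t)^{-(n-1)}$ coefficient; this rests on combining the degree equality $\deg e_g=n$ [Lemma~\ref{xxlem1.6}(d)] with the pole bound of Lemma~\ref{xxlem1.7} and the rigidity of Proposition~\ref{xxprop1.8}. Everything else is a routine Laurent expansion together with the elementary identity $\tfrac{1}{1-\xi}+\tfrac{1}{1-\xi^{-1}}=1$.
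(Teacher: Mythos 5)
Your proposal is correct and follows essentially the same route as the paper's proof: isolate the $(1-t)^{-n}$ and $(1-t)^{-(n-1)}$ contributions in Molien's formula using the pole-order bounds of Lemma \ref{xxlem1.7} and Proposition \ref{xxprop1.8}, pair each quasi-reflection with its inverse via Lemma \ref{xxlem1.4} and the identity $\tfrac{1}{1-\xi}+\tfrac{1}{1-\overline{\xi}}=1$, and then for (b) compare Laurent coefficients and invoke the palindrome identity of Lemma \ref{xxlem1.9}. The only cosmetic difference is that you make explicit a few points the paper leaves implicit (that pole order $n-1$ plus $\deg e_g=n$ forces the quasi-reflection form, and the involutive case $\xi_g=-1$).
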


\begin{proof}
(a) Let $g_1,\cdots,g_r$ be the quasi-reflections (that are
not the identity) in $G$, and let $h_1,\cdots,h_s$ be the
non-identity elements of $G$ that are not quasi-reflections.
By Lemma \ref{xxlem1.6}, for all $g\in G$, $Tr_A(g,t)=1/e_g(t)$
where $e_g(t)$ has degree $n$. Suppose now $g$ is a
quasi-reflection. Then
$$Tr_A(g,t)={\frac{1}{(1-t)^{n-1}(1-\lambda t)}}$$
where $\lambda \neq 1 \in k$. By Lemma \ref{xxlem1.6}(d),
$\lambda$ is a root of unity. Thus the Laurent
expansion of $Tr_A(g,t)$ around $t=1$ is given by
$$Tr_A(g,t)={\frac{1}{(1-t)^{n-1}}}
\big[{\frac{1}{1-\lambda}}+(1-t)a_1+\cdots \big].$$
By Lemma \ref{xxlem1.4}, the Laurent expansion of
$Tr_A(g^{-1},t)$ is given by
$$Tr_A(g^{-1},t)=\overline{Tr_A(g,t)}={\frac{1}{(1-t)^{n-1}}}
\big[{\frac{1}{1-\overline{\lambda}}}+(1-t)\overline{a_1}+\cdots\big].$$
In particular, $g^{-1}$ is again a quasi-reflection. This
also shows that if $g$ has order $2$, then
$$Tr_A(g,t)={\frac{1}{(1-t)^{n-1}(1+t)}}
={\frac{1}{(1-t)^{n-1}}}
\big[{\frac{1}{2}}+(1-t)a_1+\cdots \big].$$
Note that
$${\frac{1}{1-\lambda}}+{\frac{1}{1-\overline{\lambda}}}
={\frac{1-\overline{\lambda}+1-\lambda}
{(1-\lambda)(1-\overline{\lambda})}}
={\frac{1-\overline{\lambda}+1-\lambda}
{1-\lambda-\overline{\lambda}+\lambda\overline{\lambda}}}
=1$$
since $\lambda\overline{\lambda}=1$. Now let $h$ be a
non-identity element in $G$ that is not a quasi-reflection.
Then the Laurent expansion of its trace is of the form
$$Tr_A(h,t)={\frac{1}{(1-t)^{n-2}}}
(c_0+c_1(1-t)+\cdots ).$$
By Molien's theorem [Lemma \ref{xxlem1.11}(c)] we have
\begin{equation}
\label{2.5.1}
H_{A^G}(t)
\;=\;{\frac{1}{|G|}}\sum_{g\in G}Tr_A(g,t)
\;=\;{\frac{1}{|G|}}
[{\frac{1}{(1-t)^n}}+\sum_{i=1}^r Tr_A(g_i,t)+\sum_{j=1}^s Tr_A(h_j,t)].
\tag{2.5.1}
\end{equation}
We see that the only contributions to the coefficient of
$\frac{1}{(1-t)^{n-1}}$ come from the
$\sum_{i=1}^r Tr_A(g_i,t)$ term. By grouping each $g_i$ with
its inverse, we see that the coefficient is exactly $r/2|G|$.
(b) Expanding $H_{A^G}(t)$ around $t=1$, we have
\begin{equation}
\label{2.5.2}
H_{A^G}(t)={\frac{1}{(1-t)^n}}\;{\frac{1}{q(1)}}+
{\frac{1}{(1-t)^{n-1}}}\;{\frac{q'(1)}{q^2(1)}}+\cdots .
\tag{2.5.2}
\end{equation}
Comparing the coefficients of $1/(1-t)^n$ and $1/(1-t)^{n-1}$
in \eqref{2.5.1} and \eqref{2.5.2}, we see that
 $${\frac{1}{|G|}}\;=\;{\frac{1}{q(1)}},\quad \text{and}\quad
{\frac{r}{2|G|}}\;=\;{\frac{q'(1)}{q^2(1)}}.$$
Combining with Lemma \ref{xxlem1.9}, we obtain $r=\deg q(t)$.
\end{proof}
The following lemma will be used in the next section.

\begin{lemma} \cite[Theorem 2.3.2]{JiZ}
\label{xxlem2.6}
Let $A$ be a noetherian regular algebra
and let $g$ be a graded algebra automorphism of $A$.
Suppose $B$ is a factor ring of $A$ such that $g$ induces
an algebra automorphism $g'$ of $B$. Then $Tr_B(g',t)=p(t) Tr_A(g,t)$
where $p(t)$ is a polynomial of $t$ with $p(0)=1$.
\end{lemma}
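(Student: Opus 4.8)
The plan is to resolve $B$ by a $g$-equivariant finite graded free resolution over $A$, and then apply additivity of the trace function on the resulting exact complex; this reduces the statement to a bookkeeping computation for the trace of a semilinear endomorphism of a graded free module.

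\emph{Setup and the equivariant resolution.} Since $g$ induces $g'$ on $B=A/I$, the graded ideal $I$ is $g$-stable; as $I$ cannot contain a unit, $I\subseteq A_{\ge 1}$, so $B$ is connected graded with $B_0=k$. As a left $A$-module $B$ is cyclic, hence finitely generated, and since $A$ is noetherian and regular ($\gldim A=d<\infty$) it has a minimal graded free resolution
\[
0\lra P_d\lra\cdots\lra P_1\lra P_0\lra B\lra 0
\]
with each $P_i=\bigoplus_j A(-\ell_{ij})$ of finite rank, with $P_0=A$, and with $\ell_{ij}\ge 1$ for $i\ge 1$. The automorphism $g'$ lifts degree by degree to a chain endomorphism $(\tilde g_i)$ of this resolution covering $g'$, where each $\tilde g_i\colon P_i\to P_i$ is a degree-$0$ ($k$-linear) map satisfying $\tilde g_i(ax)=g(a)\tilde g_i(x)$: take $\tilde g_0=g$ on $P_0=A$ (which covers $g'$ and preserves $\ker=I$) and lift inductively.

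\emph{Reduction via the trace formula.} All graded pieces here are finite dimensional and the augmented complex $[\,\cdots\to P_1\to P_0\to B\to 0\,]$ is exact, so the Euler characteristic of traces (Hopf trace formula) gives
\[
Tr_B(g',t)=\sum_{i=0}^{d}(-1)^i\,Tr_{P_i}(\tilde g_i,t).
\]
Fix a homogeneous $A$-basis $\{x_{ij}\}$ of $P_i$ with $\deg x_{ij}=\ell_{ij}$ and write $\tilde g_i(x_{ij})=\sum_m a^{(i)}_{mj}x_{im}$; comparing degrees forces $a^{(i)}_{mj}\in A_{\ell_{ij}-\ell_{im}}$, so in particular $a^{(i)}_{jj}\in A_0=k$. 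The key claim is
\[
Tr_{P_i}(\tilde g_i,t)=c_i(t)\,Tr_A(g,t),\qquad c_i(t):=\sum_j a^{(i)}_{jj}\,t^{\ell_{ij}}\in k[t].
\]

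\emph{Proof of the claim and conclusion.} Using the $k$-basis $\{e_\alpha x_{ij}\}$ of $P_i$, where $\{e_\alpha\}$ is a homogeneous $k$-basis of $A$, one has $\tilde g_i(e_\alpha x_{ij})=\sum_m \bigl(g(e_\alpha)a^{(i)}_{mj}\bigr)x_{im}$; only the term $m=j$ can contribute to the diagonal, and since $a^{(i)}_{jj}$ is a central scalar the $e_\alpha x_{ij}$-coefficient there equals $a^{(i)}_{jj}$ times the $e_\alpha$-coefficient of $g(e_\alpha)$. Summing over all $\alpha$ of a fixed degree $s$ gives $a^{(i)}_{jj}\operatorname{tr}(g|_{A_s})$, and then summing over $j$ and over degrees (and using $\ell_{ij}\ge 0$ to keep $c_i$ a genuine polynomial) yields the displayed identity. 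Hence $Tr_B(g',t)=p(t)\,Tr_A(g,t)$ with $p(t)=\sum_{i=0}^d(-1)^i c_i(t)\in k[t]$. For the normalization, minimality gives $c_i(0)=0$ for $i\ge 1$, while $c_0(0)=a^{(0)}_{11}$ is the scalar by which $\tilde g_0$ acts on the degree-$0$ generator of $P_0=A$, which equals $1$ since that generator maps to $\bar 1\in B$ and $g'(\bar 1)=\bar 1$; thus $p(0)=1$. (One can also simply note that $Tr_A(g,t)$ and $Tr_B(g',t)$ both have constant term $1$, by Lemma~\ref{xxlem1.6}(a) and $B_0=k$, which forces $p(0)=1$.) The step I expect to demand the most care is this middle computation — arranging the equivariant lift and verifying that off-diagonal entries, \emph{including those lying in $A_0$}, contribute nothing to $Tr_{P_i}(\tilde g_i,t)$, so that only the diagonal scalars $a^{(i)}_{jj}$ survive — rather than any conceptually deep input; the remainder is the standard additivity-of-trace machinery.
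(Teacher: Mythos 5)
Your proof is correct: the $g$-semilinear lift to a minimal finite graded free resolution exists by projectivity and noetherianness, the Hopf trace formula applies degreewise to the exact augmented complex, and your computation that only the diagonal scalars $a^{(i)}_{jj}\in A_0$ survive in $Tr_{P_i}(\tilde g_i,t)$ is right, with minimality giving $p(0)=c_0(0)=1$. The paper offers no proof of its own here --- it simply cites \cite[Theorem 2.3.2]{JiZ} --- and your equivariant-resolution argument is essentially the standard one underlying that reference, so there is nothing to flag.
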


\section{Quasi-reflections of quantum polynomial rings}
\label{sec3}

In this section we will classify all possible quasi-reflections
of a quantum polynomial ring. The proof of the following
main result requires several lemmas.

\begin{theorem}
\label{xxthm3.1} Let $A$ be a quantum polynomial ring of
global dimension $n$. If $g\in \Aut(A)$ is a quasi-reflection
of finite order, then $g$ is in one of the following two
cases:
\begin{enumerate}
\item
There is a basis of $A_1$, say $\{b_1,\cdots,b_n\}$,
such that $g(b_j)=b_j$ for all $j\geq 2$ and $g(b_1)
=\xi b_1$. Namely, $g|_{A_1}$ is a reflection.
\item
The order of $g$ is $4$ and there is a basis of $A_1$,
say $\{b_1,\cdots,b_n\}$,
such that $g(b_j)=b_j$ for all $j\geq 3$ and $g(b_1)
=i\; b_1$ and $g(b_2)=-i\; b_2$ (where $i^2=-1$).
\end{enumerate}
\end{theorem}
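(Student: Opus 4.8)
The plan is to analyze the possible eigenvalue data of a quasi-reflection $g$ on the Koszul generating space $A_1$, using the trace formula together with the fact that for a Koszul quantum polynomial ring of dimension $n$ the Hilbert series of $A$, and hence (via the Koszul resolution) the trace of any automorphism, is completely determined by the action of $g$ on $A_1$ and on the space of relations $R\subset A_1\otimes A_1$. Concretely, since $A$ is Koszul with $H_A(t)=(1-t)^{-n}$, the Koszul dual $A^!$ is a finite-dimensional Frobenius algebra with $H_{A^!}(t)=(1+t)^n$, and one has the identity $Tr_A(g,t)\cdot Tr_{A^!}(g,-t)=1$ (or, equivalently, the alternating-sum formula coming from the minimal free resolution of $k$). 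So if $g|_{A_1}$ is diagonalizable with eigenvalues $\lambda_1,\dots,\lambda_n$, then writing $e_g(t)=\prod(1-\lambda_j t)$ would give $Tr_A(g,t)=1/e_g(t)$ — but this is exactly the commutative/Koszul-naive answer, and Example \ref{xxex2.3} shows it is generally false. The first real step is therefore to pin down the correct relation between the eigenvalues of $g$ on $A_1$ and the Euler polynomial $e_g(t)$ of degree $n$ from Lemma \ref{xxlem1.6}.

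First I would record the constraints. Diagonalize $g|_{A_1}$ and let $m_\lambda$ be the multiplicity of eigenvalue $\lambda$. The low-degree coefficients of $Tr_A(g,t)=1/e_g(t)$ are forced: the coefficient of $t$ is $\sum_j\lambda_j=\sum_\lambda m_\lambda\lambda$, and $e_g(t)$ has degree exactly $n$ (Lemma \ref{xxlem1.6}(d)) with all roots roots of unity (Lemma \ref{xxlem1.6}(e)), and it is a product of cyclotomic polynomials, hence a palindrome or skew-palindrome. The quasi-reflection hypothesis says $e_g(t)=(1-t)^{n-1}(1-\xi t)$ with $\xi\ne 1$ a root of unity. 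Comparing the coefficient of $t$: $-\big((n-1)+\xi\big)=-\sum_\lambda m_\lambda\lambda$, i.e. $\sum_\lambda m_\lambda\lambda=(n-1)+\xi$. Since each $|\lambda|=1$ (finite order) and $\sum_\lambda m_\lambda=n$, the triangle inequality gives $|(n-1)+\xi|=\big|\sum m_\lambda\lambda\big|\le n$, with near-equality; this pins the eigenvalues of $g$ on $A_1$ down hard. Writing $1$ with multiplicity $a$ and the other eigenvalues $\mu_1,\dots,\mu_{n-a}$ (each $\ne 1$, unit modulus), we need $a+\sum\mu_i=(n-1)+\xi$, i.e. $\sum\mu_i=(n-1-a)+\xi$. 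Since $|\mu_i|=1$, $\big|\sum\mu_i\big|\le n-a$, while $|(n-1-a)+\xi|\ge n-1-a-1=n-a-2$ and, more usefully, one compares real parts: $\mathrm{Re}\sum\mu_i=(n-1-a)+\mathrm{Re}\,\xi\le n-a$ forces $\mathrm{Re}\,\xi\le 1$ (automatic) but the equality analysis in $\sum\mu_i$ is the crux: $\mathrm{Re}\sum\mu_i\le n-a$ with equality iff all $\mu_i=1$, which is excluded, so we are slightly below the bound, and the deficit $\sum(1-\mathrm{Re}\,\mu_i)=1-\mathrm{Re}\,\xi$ is small. The two solutions that emerge: either $n-a=1$ with a single $\mu_1=\xi$ (case (a): $g|_{A_1}$ is a reflection with eigenvalue $\xi$), or $n-a=2$ with $\mu_1+\mu_2=1+\xi$ and $|\mu_1|=|\mu_2|=1$, $\mu_i\ne 1$. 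In the latter, $\mu_1+\mu_2=1+\xi$ with all three of $\mu_1,\mu_2,\xi$ roots of unity is a vanishing-sum-of-roots-of-unity condition ($\mu_1+\mu_2-1-\xi=0$), whose minimal solutions are classically known: up to relabeling the only possibility with $\mu_i\ne 1$ is $\{\mu_1,\mu_2\}=\{i,-i\}$ and $\xi=-1$, or degenerate ones that collapse to case (a).

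The main obstacle — and the place where the Koszul structure, not just the character on $A_1$, has to be used — is ruling out the remaining "combinatorially allowed" eigenvalue patterns that do not actually occur, and conversely checking that case (a) with $\xi$ a root of unity and case (b) genuinely give $Tr_A(g,t)=(1-t)^{n-1}(1-\xi t)^{-1}$ rather than something else. Here I would invoke that $A$ is a domain together with Lemma \ref{xxlem2.6} / a normal-element or regular-normal-sequence argument: pick an eigenvector $b_1\in A_1$ with $g(b_1)=\xi b_1$ (or two such, in case (b)); one shows the corresponding element is normal (using that $A$ has the Hilbert series of a polynomial ring and is a domain, so that a suitably chosen element of $A_1$ on which $g$ acts by a scalar generates a normal, hence regular, ideal), pass to the factor ring $A/(b_1)$, which is again a quantum polynomial ring of dimension $n-1$, and run induction on $n$, using $Tr_{A/(b_1)}(\bar g,t)=(1-\xi t)^{-1}\cdot$ wait—rather $Tr_A(g,t)\cdot(1-\xi t)=Tr_{A/(b_1)}(\bar g,t)$, so $\bar g$ on the smaller quantum polynomial ring has trace $(1-t)^{-(n-1)}$, forcing $\bar g=\mathrm{id}$ by Proposition \ref{xxprop1.8}, i.e. $g$ fixes a complementary basis. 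The base cases $n=1,2$ are handled directly from the dimension-$2$ classification ($k_q[x,y]$ and $k_J[x,y]$) recorded after Definition \ref{xxdefn1.12}. The delicate point throughout is the existence of the normal eigenvector: one has to argue that an eigenvector $b\in A_1$ for a non-trivial eigenvalue of $g$ can be chosen so that $bA=Ab$, and this is precisely where one expects to need the global structure (Koszulity, the domain property, and the fact that $A^!$ is Frobenius of dimension $2^n$), rather than a soft character computation — so that is where I anticipate the proof's weight lies, and where the separate lemmas preceding Theorem \ref{xxthm3.1} are presumably spent.
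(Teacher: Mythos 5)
Your opening move is the same as the paper's: diagonalize $g|_{A_1}$, compare the coefficient of $t$ in $Tr_A(g,t)=[(1-t)^{n-1}(1-\xi t)]^{-1}$ with $\sum_j\lambda_j$, and reduce to a sum-of-roots-of-unity equation $\sum_{i=1}^m\mu_i=(m-1)+\xi$ with all $\mu_i\neq 1$. But from there the proposal has two genuine gaps. First, the numerical step is much harder than you claim, and your triangle-inequality/real-part "deficit" heuristic does not establish $m\le 2$: the deficit budget $1-\mathrm{Re}\,\xi$ can be as large as $2$, and eigenvalues of order $6$ each cost only $1/2$, so solutions with $m=3$ and $m=4$ (built from $\zeta_6,\zeta_6^5$) survive your argument. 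The paper needs the Galois-average/M\"obius-function computation of Lemma \ref{xxlem3.2} to get the honest list, which is Proposition \ref{xxprop3.3}(a)--(d), and the $\zeta_6$-cases (c),(d) then have to be killed by a separate algebraic argument (Lemma \ref{xxlem3.8} plus Lemma \ref{xxlem2.6}: restrict $g$ to the subalgebra on the two $\xi$-eigenvectors and derive $(1-\xi t)^2p(t)=(1-t)^{n-1}(1-\xi' t)$, impossible). Second, even in the $m=2$ case your claim that the equation forces $\{\mu_1,\mu_2\}=\{i,-i\}$ is false: $\mu_1=\xi_0$, $\mu_2=-\xi_0$, $\xi=-1$ solves it for \emph{every} root of unity $\xi_0$. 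Pinning the order down to $4$ is the content of Lemma \ref{xxlem3.6}, which is not number theory at all --- it uses the quadratic relations of $A$ to show that for $\xi_0^4\neq 1$ the algebra becomes $\mathbb{Z}^2$-graded, extracts the two-generated quantum polynomial subalgebra via Proposition \ref{xxprop3.5}, and computes its trace directly to get a contradiction.

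Your proposed closing argument --- find a normal eigenvector $b_1\in A_1$, pass to $A/(b_1)$, and induct on $n$ --- also cannot work as stated for case (b). Example \ref{xxex4.4}(a) exhibits a quantum polynomial ring with a mystic reflection but \emph{no} normal element in degree $1$ (only $b_1^2$ is normal), so the degree-one normal eigenvector you want to factor out need not exist precisely in the case you most need to analyze. You correctly flag that the existence of such an element is "where the proof's weight lies," but the paper's resolution is different: it never runs an induction on $n$, and instead uses the multigrading/Ore-extension structure (Proposition \ref{xxprop3.5}) and trace restrictions to factor rings (Lemma \ref{xxlem2.6}) to eliminate the spurious eigenvalue patterns directly.
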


We start with a lemma about sums of roots of unity.

\begin{lemma}
\label{xxlem3.2} Every solution of the following system
\begin{equation}
\label{3.2.1}
n=x_1+x_2+\cdots+x_{n+2}
\tag{3.2.1}
\end{equation}
with each $x_i$ being a root of unity, but not 1, is in
one of the following cases:
\begin{enumerate}
\item
$n=0$, $x_1=\xi$ and $x_2=-\xi$ where $\xi$ is a root of 1,
which is not $\pm 1$.
\item
$n=2$, $x_1=x_2=\zeta_6$ and $x_3=x_4=\overline{\zeta_6}$
and all possible permutations.
\end{enumerate}
\end{lemma}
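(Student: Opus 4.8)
The plan is to convert \eqref{3.2.1} into a single \emph{rational} identity by averaging over Galois conjugates, use that identity to force $n\le 2$, and then dispose of the finitely many remaining possibilities by direct computation in $\mathbb{C}$.

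First write $q_j$ for the order of $x_j$; since $x_j\neq 1$ we have $q_j\ge 2$, and $x_j$ is a primitive $q_j$-th root of unity. Set $N=\operatorname{lcm}(q_1,\dots,q_{n+2})$ and apply each element of $\operatorname{Gal}(\mathbb{Q}(\zeta_N)/\mathbb{Q})$ to \eqref{3.2.1}, then sum over the group. Each $\sigma$ fixes $n\in\mathbb{Z}$, and by the classical identity $\operatorname{Tr}_{\mathbb{Q}(\zeta_{q})/\mathbb{Q}}(\zeta_q)=\mu(q)$ together with the fact that the restriction $\operatorname{Gal}(\mathbb{Q}(\zeta_N)/\mathbb{Q})\to\operatorname{Gal}(\mathbb{Q}(\zeta_{q_j})/\mathbb{Q})$ is surjective with fibres of size $\phi(N)/\phi(q_j)$, one gets
$$\sum_{\sigma}\sigma(x_j)=\frac{\phi(N)}{\phi(q_j)}\operatorname{Tr}_{\mathbb{Q}(\zeta_{q_j})/\mathbb{Q}}(x_j)=\frac{\phi(N)}{\phi(q_j)}\,\mu(q_j).$$
Dividing by $\phi(N)$, I would therefore obtain $\sum_{j=1}^{n+2}\mu(q_j)/\phi(q_j)=n$. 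Now $\mu(q)/\phi(q)>0$ forces $\mu(q)=1$, so $q$ is a product of an even number (at least $2$, since $q\ne 1$) of distinct primes, whence $\phi(q)\ge\phi(6)=2$; thus $\mu(q_j)/\phi(q_j)\le\tfrac12$ for every $j$, with equality only at $q_j=6$. Summing, $n\le (n+2)/2$, so $n\in\{0,1,2\}$.

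For $n=0$, \eqref{3.2.1} just says $x_2=-x_1$, and the hypotheses $x_1,x_2\neq 1$ amount to $x_1\neq\pm 1$: this is case (a). For $n=2$ the identity above reads $\sum_{j=1}^{4}\mu(q_j)/\phi(q_j)=2$ with four summands each $\le\tfrac12$, so every $q_j=6$; if $a$ of the $x_j$ equal $\zeta_6$ and the remaining $4-a$ equal $\overline{\zeta_6}$, then the real part of the sum is automatically $2$, while the imaginary part is $\sqrt{3}\,(a-2)$, vanishing exactly when $a=2$: this is case (b).

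The only delicate case is $n=1$, which I expect to be the main obstacle, because there the averaged identity is merely numerical and admits ``solutions'' not coming from actual roots of unity. Here $\sum_{j=1}^{3}\mu(q_j)/\phi(q_j)=1$ with three summands each $\le\tfrac12$; since the positive values of $\mu(q)/\phi(q)$ for $q\ge 2$ are $\tfrac12$ (only at $q=6$) and otherwise $\le\tfrac14$ (with $\tfrac14$ only at $q=10$), a short case analysis pins the multiset $\{q_1,q_2,q_3\}$ down to either $\{6,6,q\}$ with $\mu(q)=0$, or $\{6,10,10\}$. Both are then killed by returning to $x_1+x_2+x_3=1$: if $x_1,x_2$ are primitive sixth roots then $x_1+x_2\in\{\,1,\,1\pm\sqrt{-3}\,\}$, forcing $x_3\in\{\,0,\,\mp\sqrt{-3}\,\}$, none of absolute value $1$; and if $x_1$ is a primitive sixth root while $x_2,x_3$ are primitive tenth roots, then $|1-x_1|=1$ whereas $x_2^{-1}x_3$ is a fifth root of unity, so $|x_2+x_3|=|1+x_2^{-1}x_3|\in\{\,2,\,\tfrac{\sqrt{5}+1}{2},\,\tfrac{\sqrt{5}-1}{2}\,\}$, never $1$, contradicting $x_2+x_3=1-x_1$. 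Hence $n=1$ is impossible, and (a), (b) are the only solutions.
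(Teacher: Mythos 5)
Your proof is correct and follows essentially the same route as the paper's: averaging equation \eqref{3.2.1} over the Galois group to obtain $\sum_j \mu(q_j)/\phi(q_j)=n$, bounding $n\le 2$ from the inequality $\mu(q)/\phi(q)\le \tfrac12$, and then eliminating the $n=1$ sub-cases by direct computation with sixth and tenth roots of unity. The one point where you go beyond the paper is the $\{6,10,10\}$ sub-case, which the paper dismisses as ``straightforward'' without detail, whereas you supply the clean explicit argument $|x_2+x_3|=|1+\zeta_5^k|\in\{2,(\sqrt5+1)/2,(\sqrt5-1)/2\}\neq 1=|1-x_1|$.
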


\begin{proof} First we claim that $x_i$ cannot be $-1$.
If $x_i=-1$, say $x_{n+2}=-1$, the equation becomes
$$n=x_1+\cdots +x_{n+1}-1$$
or
$$n+1=x_1+\cdots +x_{n+1}.$$
Since every $x_i$ is a root of 1, but not 1, the real part
of each $x_i$ is strictly less than $1$, and there is no
solution to the above equation. Thus we proved our claim
that $x_i\neq -1$ for all $i$.

Let $w_i$ be the order of $x_i$ and let $w$ be the gcd
of the $w_i$. Since $x_i\neq -1$, $w_i\geq 3$. The Galois group
of ${\mathbb Q}(\zeta_w)$ over ${\mathbb Q}$ is
$({\mathbb Z}/w{\mathbb Z})^*$. For every $p$ coprime to
$w$, let $\Xi_p$ denote the automorphism determined by
$\Xi_p(\zeta)=\zeta_w^p$. Let $\Xi$ denote the group
$\{\Xi_p\;|\; (p,w)=1\}=({\mathbb Z}/w{\mathbb Z})^*$ .
The order of $\Xi$ is $\phi(w)$. Recall that
$$\sum_{(p,w)=1} \zeta_w^p
\;=\;\mu(w)
\; :=\;
\begin{cases} (-1)^t & \text{if $w$ is a product of $t$
distinct primes}
\\ 0 & \text{if $w$ is not square-free}
\end{cases}
$$
where the notation $(p,w)=1$ means the set of integers $p$ such
that $0<p<w$ and that $p$ is coprime to $w$ (see \cite[(16.6.4),
p. 239]{HW}). Let $\Xi[x_i]$ denote $\sum_{(p,w)=1} \Xi_p(x_i)$.
Since $\Xi[x_i]$ is stable under $\Xi$-action, it contains
$\phi(w)/\phi(w_i)$ copies of each $w_i$-th primitive root of
unity. Thus we have
$$\Xi[x_i]
\;=\;
{\frac{\phi(w)}{\phi(w_i)}}\sum_{(p,w_i)=1} \zeta_{w_i}^p
\;=\;
{\frac{\phi(w)}{\phi(w_i)}}\mu(w_i).$$
Applying $\Xi$ to equation \eqref{3.2.1} we obtain
that
$$n \phi(w)=\sum_{i=1}^{n+2} {\frac{\phi(w)}{\phi(w_i)}}\mu(w_i).$$
Hence
$$n=\sum_{i=1}^{n+2} {\frac{1}{\phi(w_i)}}\mu(w_i)
=\sum_{i=1}^{n+2} ({\frac{\mu(w_i)}{\phi(w_i)}}-1)+n+2$$
or
\begin{equation}
\label{3.2.2}
\sum_{i=1}^{n+2} (1-{\frac{\mu(w_i)}{\phi(w_i)}})=2.
\tag{3.2.2}
\end{equation}
Since the M{\"o}bius function $\mu(w_i)$ is either 1, 0,
or $-1$, and $\phi(w_i)$ is at least $2$, the largest possible
$n$ is $2$. So we consider three cases:

$n=0$: If $x_1=\xi$, then $x_2=-\xi$. This is case (a).

$n=1$: If $\mu(w_3)\leq 0$, then we have
$$\sum_{i=1}^{2} (1-{\frac{\mu(w_i)}{\phi(w_i)}})\leq 1.$$
This implies that
$${\frac{\mu(w_1)}{\phi(w_1)}}={\frac{\mu(w_2)}{\phi(w_2)}}=
\frac{1}{2}, \quad \text{and}\quad \mu(w_3)=0.$$
The only possibility is $w_1=w_2=6$. Thus $x_1=\zeta_6^i$,
$x_2=\zeta_6^j$ where $i,j\in \{1,5\}$, and $x_3=\xi$ where
the order of $\xi$ is not square-free. As complex numbers,
$$x_1={\frac{1}{2}}+ a {\frac{\sqrt{3}}{2}} i, \quad
\text{and}\quad x_2={\frac{1}{2}}+ b {\frac{\sqrt{3}}{2}} i$$
where $a,b\in \{1,-1\}$. Hence
$$x_3=1-x_1-x_2=-a {\frac{\sqrt{3}}{2}} i-b {\frac{\sqrt{3}}{2}} i$$
which is clearly not a root of unity. This yields a contradiction,
so $\mu(w_3)=1$. By symmetry, $\mu(w_1)=\mu(w_2)=1$.
By \eqref{3.2.2}, we have
$$\sum_{i=1}^3 {\frac{1}{\phi(w_i)}}=1,$$
which has three solutions up to permutation:
$\{\phi(w_1),\phi(w_2),\phi(w_3)\}=\{3,3,3\}$, or
$\{2,4,4\}$, or $\{2,3,6\}$. But there is no $w$ such that
$\phi(w)=3$. Hence $\phi(w_1)=2$ and $\phi(w_2)=\phi(w_3)=4$.
Together with $\mu(w_1)=\mu(w_2)=\mu(w_3)=1$, we see that
$$w_1=6, \quad \text{and}\quad w_2=w_3= 10 .$$
With these constraints, it is straightforward to show
that there is no solution to the equation $x_1+x_2+x_3=1$.
In conclusion, there is no solution when $n=1$.

$n=2$: The equation \eqref{3.2.2} becomes
$$\sum_{i=1}^4 {\frac{\mu(w_i)}{\phi(w_i)}}=2.$$
Since $\phi(w_i)\geq 2$, then only solution is
$${\frac{\mu(w_i)}{\phi(w_i)}}={\frac{1}{2}}$$
for all $i$. Then $w_i=6$ for all $i$. Hence
$$\{x_1,x_2,x_3,x_4\}\;=\;
\{\zeta_6,\zeta_6,\zeta_6^5,\zeta_6^5\}$$
up to permutations. This is case (b).
\end{proof}

Now we can show a part of Theorem \ref{xxthm3.1}.

\begin{proposition}
\label{xxprop3.3} Suppose $A$ is a quantum polynomial ring of
global dimension $n$. If $g\in \Aut(A)$ is a
quasi-reflection of finite order, then $g$ is in one of
the following cases:
\begin{enumerate}
\item
There is a basis of $A_1$, $\{b_1,\cdots,b_n\}$
such that $g(b_i)=b_i$ for all $i\geq 2$ and $g(b_1)
=\xi b_1$.
\item
The order of $g$ is $2m$ and there is a basis of $A_1$,
$\{b_1,\cdots,b_n\}$
such that $g(b_i)=b_i$ for all $i\geq 3$ and $g(b_1)
=\xi\; b_1$ and $g(b_2)=-\xi\; b_2$.
\item
The order of $g$ is 6 and there is a basis of $A_1$,
$\{b_1,\cdots,b_n\}$ such that $g(b_i)=b_i$ for all $i\geq 4$
and $g(b_i)=\xi\; b_i$ for $i=1,2$ and $g(b_3)=\overline{\xi}\; b_3$
where $\xi=\zeta_6$ or $\zeta_6^5$.
\item
The order of $g$ is 6 and there is a basis of $A_1$,
$\{b_1,\cdots,b_n\}$ such that $g(b_i)=b_i$ for all $i\geq 5$
and $g(b_i)=\zeta_6\; b_i$ for $i=1,2$ and $g(b_j)=\zeta_6^5 \; b_j$
for $j=3,4$.
\end{enumerate}
\end{proposition}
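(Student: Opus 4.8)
The plan is to reduce Proposition~\ref{xxprop3.3} to the arithmetic Lemma~\ref{xxlem3.2}: I would distill the quasi-reflection hypothesis into a single numerical identity among the eigenvalues of $g$ on $A_1$, and then read the four shapes (a)--(d) directly off the two solution families of that lemma.

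\emph{Diagonalizing and extracting the identity.} Since $g$ has finite order and $\operatorname{char} k = 0$, the map $g|_{A_1}$ is diagonalizable; I fix a basis $\{b_1,\dots,b_n\}$ of $A_1$ with $g(b_i)=\lambda_i b_i$, each $\lambda_i$ a root of unity. (Here $n=\dim A_1$ because $H_A(t)=(1-t)^{-n}$, and it is enough to describe $g$ on $A_1$ since a quantum polynomial ring is Koszul, hence generated in degree~$1$.) As $g$ is a quasi-reflection, $Tr_A(g,t)=\big((1-t)^{n-1}(1-\xi t)\big)^{-1}$ with $\xi\neq 1$, so by Lemma~\ref{xxlem1.6}(a) the Euler polynomial of $g$ is $e_g(t)=(1-t)^{n-1}(1-\xi t)$, and by Lemma~\ref{xxlem1.6}(e), $\xi$ is a root of unity. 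Comparing the coefficients of $t$ in the identity $Tr_A(g,t)\,e_g(t)=1$ (both series have constant term $1$, the coefficient of $t$ in $Tr_A(g,t)$ is $tr(g|_{A_1})=\sum_i\lambda_i$, and the coefficient of $t$ in $(1-t)^{n-1}(1-\xi t)$ is $-(n-1)-\xi$) yields
\[
\lambda_1+\cdots+\lambda_n=(n-1)+\xi .
\]
This is the only constraint $g|_{A_1}$ imposes; in particular one must \emph{not} assume $Tr_A(g,t)=\prod_i(1-\lambda_i t)^{-1}$, which already fails for the quasi-reflection in Example~\ref{xxex2.3}(b).

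\emph{Applying Lemma~\ref{xxlem3.2}.} Let $k$ be the number of indices with $\lambda_i\neq 1$; renumbering so that $\lambda_1,\dots,\lambda_k\neq 1$ and $\lambda_{k+1}=\cdots=\lambda_n=1$, the identity becomes $\lambda_1+\cdots+\lambda_k=(k-1)+\xi$. Now $k=0$ would force $\xi=1$, which is excluded, and $k=1$ reads $\lambda_1=\xi$, which is case~(a); so I may assume $k\geq2$. If $\xi\neq-1$, then $-\xi$ is a root of unity different from $1$ and $\lambda_1+\cdots+\lambda_k+(-\xi)=k-1$ is an instance of the equation of Lemma~\ref{xxlem3.2} with $k+1\geq3$ summands; that lemma forces $k=3$ and $\{\lambda_1,\lambda_2,\lambda_3,-\xi\}=\{\zeta_6,\zeta_6,\overline{\zeta_6},\overline{\zeta_6}\}$, so after removing $-\xi$ the multiset $\{\lambda_1,\lambda_2,\lambda_3\}$ consists of a primitive sixth root of unity with multiplicity~$2$ together with its conjugate with multiplicity~$1$, which is case~(c); since both nontrivial eigenvalues have order~$6$, $g$ has order~$6$. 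If $\xi=-1$, then $\lambda_1+\cdots+\lambda_k=k-2$ is an instance of the equation of Lemma~\ref{xxlem3.2} with $k\geq2$ summands, so that lemma forces $k=2$, giving $\{\lambda_1,\lambda_2\}=\{\zeta,-\zeta\}$ with $\zeta\neq\pm1$ (case~(b), with $\operatorname{ord}(g)=\operatorname{lcm}(\operatorname{ord}(\zeta),\operatorname{ord}(-\zeta))$, which is always even), or $k=4$, giving $\{\lambda_1,\dots,\lambda_4\}=\{\zeta_6,\zeta_6,\overline{\zeta_6},\overline{\zeta_6}\}$ (case~(d), again of order~$6$). This exhausts all cases.

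\emph{Where the difficulty lies.} The only genuinely nontrivial ingredient is Lemma~\ref{xxlem3.2} itself; given it, the proposition is the single identity of the first step plus bookkeeping. The points requiring care are: treating $\xi=-1$ separately, since then $-\xi=1$ cannot serve as a summand in Lemma~\ref{xxlem3.2} and the identity must instead be used in the form $\sum_{\lambda_i\neq1}\lambda_i=k-2$; remembering, when $\xi\neq-1$, that the extra summand $-\xi$ is not an eigenvalue of $g$ and must be deleted when reconstructing the eigenvalue multiset; and disposing of $k=0,1$ by hand since they lie outside the scope of Lemma~\ref{xxlem3.2}. Reconciling the eigenvalue patterns with the group orders stated in (b)--(d) is a brief root-of-unity calculation.
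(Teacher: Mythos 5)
Your proposal is correct and follows essentially the same route as the paper: extract the single identity $\sum_i\lambda_i=(n-1)+\xi$ from the linear coefficient of $Tr_A(g,t)$, discard the eigenvalues equal to $1$, split into the cases $\xi=-1$ and $\xi\neq-1$ (moving $-\xi$ to the left in the latter), and read off the four configurations from Lemma~\ref{xxlem3.2}. The only cosmetic difference is that the paper identifies $\xi$ as $\hdet g$ via \eqref{1.8.1} and invokes Proposition~\ref{xxprop1.8} for $\xi\neq1$, whereas you obtain the same facts from the definition and Lemma~\ref{xxlem1.6}(e); your bookkeeping of the orders in cases (b)--(d) is also correct.
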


\begin{proof} Note that the Hilbert series of $A$ is
$H_A(t)=(1-t)^{-n}$. By the definition of a
quasi-reflection, $Tr_A(g,t)=(1-t)^{-n+1}(1-\xi t)^{-1}$
for some root of unity $\xi$. By Proposition \ref{xxprop1.8},
$\xi\neq 1$. Furthermore using equation \eqref{1.8.1}
we compute $\xi=\hdet g$, so
the order of $g$ is a multiple of the order of $\xi$.

Since $g$ has a finite order, there is a basis of $A_1$,
$\{b_1,\cdots,b_n\}$, such that $g(b_i)=x_i b_i$ for all $i$,
where every $x_i$ is a root of unity whose order divides the order
of $g$.

Since the coefficient of the $t$ term in the power series
expansion of $Tr_A(g,t)$ is $tr (g|_{A_1})$, we have
$$\sum_{i=1}^n x_i= (n-1)+\xi.$$
Cancelling all $x_i$'s with $x_i=1$, and permuting
$x_i$ if necessary, we have
\begin{equation}
\label{3.3.1}
\sum_{i=1}^m x_i=(m-1)+\xi
\tag{3.3.1}
\end{equation}
where $x_i\neq 1$ for all $i=1,\cdots,m$. If $m=1$, this
is case (a), and we are done. Now assume $m\geq 2$.
First we assume $\xi\neq -1$. Moving $\xi$ to the
left-hand side, equation \eqref{3.3.1} becomes
$$\sum_{i=1}^m x_i-\xi=m-1.$$  Since $m \geq 2$
by Lemma \ref{xxlem3.2} there are two possibilities.
The first case is Lemma \ref{xxlem3.2} case (b): $m-1=2$ and
$-\xi=\zeta_6$ or $-\xi=\zeta_6^5$.
By symmetry, we may assume $-\xi=\zeta_6^5$ and
$x_1=x_2=\zeta_6,x_3=\zeta_6^5$. This is our case (c).

The second case is $m\geq 2$ and $\xi=-1$. Then
equation \eqref{3.3.1} becomes
$$\sum_{i=1}^m x_i=m-2.$$
By Lemma \ref{xxlem3.2} there are two cases. Either
$m=2$, $x_1=\xi$ and $x_2=-\xi$, which is our case (b),
or $m-2=2$, which is our case (d).
\end{proof}

In the rest of this section we will eliminate most
of cases in Proposition \ref{xxprop3.3}(b,c,d).
In some cases we will use the notion of a
${\mathbb Z}^2$-graded algebra. We say that $R$ is a
connected ${\mathbb Z}^2$-graded algebra,
if all the generators of $R$ are either in ${\mathbb Z}^{+}
\times {\mathbb Z}^{\geq 0}$ or ${\mathbb Z}^{\geq 0}
\times {\mathbb Z}^{+}$. The Hilbert series of a
${\mathbb Z}^2$-graded algebra/module $M$ is given by
$$H_M(t,s)=\sum_{i,j} \dim M_{i,j} \; t^i s^j.$$
The usual techniques for Hilbert series
of ${\mathbb Z}$-graded algebras/modules
extend to the ${\mathbb Z}^2$-graded setting.
The following lemma is clear.

\begin{lemma}
\label{xxlem3.4}
Let $R$ be a connected ${\mathbb Z}^2$-graded algebra.
\begin{enumerate}
\item
If we use assignment $\deg (1,0)=\deg(0,1)=1$ to make $R$
a ${\mathbb Z}$-graded algebra, then $H_R(t)=H_R(t,t)$.
\item
Suppose that $R$ is noetherian of finite global dimension.
Then $H_A(t,s)=(e(t,s))^{-1}$ where $e(t,s)$ is an integral
polynomial in $t,s$ with $e(0,0)=1$.
\item
Suppose $R$ is noetherian and has finite global dimension.
Let $M$ be a finitely generated ${\mathbb Z}^2$-graded $R$-module.
Then $H_M(t,s)=p(t,s)H_R(t,s)$ for some integral
polynomial $p(t,s)$.
\end{enumerate}
\end{lemma}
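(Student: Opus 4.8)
The plan is to dispatch (a) by a direct dimension count and (b) and (c) by the standard minimal-free-resolution argument, carried out in the $\mathbb{Z}^2$-graded category. For (a), collapsing the bigrading via $\deg(1,0)=\deg(0,1)=1$ gives the $\mathbb{Z}$-graded piece $R_n=\bigoplus_{i+j=n}R_{i,j}$, so that $\dim R_n=\sum_{i+j=n}\dim R_{i,j}$ and hence
$$H_R(t)=\sum_{n\ge 0}\Big(\sum_{i+j=n}\dim R_{i,j}\Big)t^n=\sum_{i,j}\dim R_{i,j}\,t^{i+j}=H_R(t,s)\big|_{s=t},$$
which is (a). No finiteness hypotheses are needed here beyond the standing assumption that the bihomogeneous pieces are finite dimensional.

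For (b), I would take a minimal $\mathbb{Z}^2$-graded free resolution of the trivial module $k=R/\mathfrak{m}$, where $\mathfrak{m}=\bigoplus_{(i,j)\ne(0,0)}R_{i,j}$ is the augmentation ideal. Since $R$ is noetherian, $k$ and each of its syzygies are finitely generated, and since $R$ has finite global dimension the resolution
$$0\to P_d\to\cdots\to P_1\to P_0\to k\to 0$$
is finite, with each $P_i=\bigoplus_j R(-a_{ij},-b_{ij})$ a finite direct sum of bigraded shifts of $R$. Taking the alternating sum of Hilbert series and using $H_{R(-a,-b)}(t,s)=t^a s^b H_R(t,s)$ yields
$$1=H_k(t,s)=\Big(\sum_{i=0}^d(-1)^i\sum_j t^{a_{ij}}s^{b_{ij}}\Big)H_R(t,s),$$
so $H_R(t,s)=e(t,s)^{-1}$ with $e(t,s)=\sum_{i=0}^d(-1)^i\sum_j t^{a_{ij}}s^{b_{ij}}\in\mathbb{Z}[t,s]$; it is a genuine polynomial (not merely a Laurent polynomial) because $k$ sits in bidegree $(0,0)$ and $R$ is nonnegatively bigraded. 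Evaluating at $(t,s)=(0,0)$, only the $P_0$ term contributes, since minimality and connectedness force $P_0=R$ and force every $P_i$ with $i\ge 1$ to be generated in strictly positive bidegree; hence $e(0,0)=1$. For (c) the identical argument applied to a finite minimal $\mathbb{Z}^2$-graded free resolution $0\to Q_d\to\cdots\to Q_0\to M\to 0$ of the finitely generated module $M$ gives $H_M(t,s)=p(t,s)H_R(t,s)$ with $p(t,s)=\sum_i(-1)^i\sum_j t^{c_{ij}}s^{c'_{ij}}$; this is an integral Laurent polynomial, and a genuine polynomial when $M$ is generated in nonnegative bidegrees, which is the case in all the applications in this paper.

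I expect no real obstacle: the content is bookkeeping with Hilbert series. The one point deserving a word of justification is that the three facts used from the connected $\mathbb{Z}$-graded case --- existence of minimal graded free resolutions, their finiteness when the global dimension is finite, and the implication that a finitely generated graded projective module is free --- all carry over verbatim to the connected $\mathbb{Z}^2$-graded setting, since each is a consequence of the graded Nakayama lemma, which holds because $R_{0,0}=k$ and $R$ is nonnegatively bigraded. One could instead collapse to the $\mathbb{Z}$-grading as in (a) and invoke the $\mathbb{Z}$-graded versions of Lemma \ref{xxlem1.6}(b) and its module analogue, but working directly in the bigraded category is cleaner and retains the information in the two variables.
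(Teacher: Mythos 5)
Your proof is correct. The paper offers no proof of this lemma at all (it is introduced only with the remark that it ``is clear''), and your argument --- part (a) by collapsing the bigrading, parts (b) and (c) by taking alternating sums of Hilbert series over a finite minimal $\mathbb{Z}^2$-graded free resolution, with the bigraded Nakayama lemma supplying minimality, freeness of projectives, and the observation that $e(0,0)=1$ --- is precisely the standard argument underlying the $\mathbb{Z}$-graded analogues the paper invokes elsewhere (Lemma \ref{xxlem1.6}(b)), so it is essentially the intended approach and fills the omitted details correctly.
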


Next we assume that $A$ is generated by $A_1$, which
has a basis $\{b_1,\cdots,b_n\}$.

\begin{proposition}
\label{xxprop3.5}
Suppose that a quantum polynomial algebra $A$ is
${\mathbb Z}^2$-graded with
$\deg b_i=(1,0)$ for all $i=1,\cdots,m$ and $\deg b_j=(0,1)$
for all $j=m+1,\cdots,n$. Let $B$ and $C$ be graded
subalgebras generated by $\{b_1,\cdots,b_m\}$ and
$\{b_{m+1},\cdots,b_n\}$ respectively. Then
\begin{enumerate}
\item
$B\cong A/(C_{\geq 1})$ and $C\cong A/(B_{\geq 1})$.
\item
Both $B$ and $C$ are quantum polynomial rings.
\item
If $m=1$, then $A$ is the Ore extension $C[b_1;\sigma]$
for some graded algebra automorphism $\sigma$ of $C$.
As a consequence, $b_1$ is a normal element of $A$.
\end{enumerate}
\end{proposition}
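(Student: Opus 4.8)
The plan is to prove (a), (b), (c) in order: (a) by a bidegree count, (b) from the bigraded Artin--Schelter--Gorenstein structure of $A$ together with Lemma~\ref{xxlem1.6}(b), and (c) from (b) and an analysis of the bidegree-$(1,1)$ relations. Write $B_1=\sum_{i\le m}kb_i$ and $C_1=\sum_{j>m}kb_j$, so $A_1=B_1\oplus C_1$. Since $A$ is a quantum polynomial ring it is Koszul, hence $A=T(A_1)/(R)$ with quadratic relation space $R\subseteq A_1\otimes A_1$, and by $\mathbb{Z}^2$-homogeneity $R=R_{(2,0)}\oplus R_{(1,1)}\oplus R_{(0,2)}$ with $R_{(2,0)}\subseteq B_1\otimes B_1$, $R_{(1,1)}\subseteq (B_1\otimes C_1)\oplus(C_1\otimes B_1)$, $R_{(0,2)}\subseteq C_1\otimes C_1$. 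For (a): a monomial in the $b_i$ has bidegree $(p,q)$ with $q\ge 1$ precisely when it involves one of $b_{m+1},\dots,b_n$, so $\bigoplus_{q\ge1}A_{*,q}$ is the two-sided ideal $(C_{\ge1})$ while $A_{*,0}$ is the subalgebra $B$; thus $A=B\oplus(C_{\ge1})$ as graded vector spaces and $B\cong A/(C_{\ge1})$, and symmetrically $C\cong A/(B_{\ge1})$. Running the same complement argument inside $T(A_1)$ (the ideal generated by $C_1$ meets $T(B_1)$ trivially) identifies $(R)\cap T(B_1)$ with the ideal of $T(B_1)$ generated by $R_{(2,0)}$, so $B\cong T(B_1)/(R_{(2,0)})$ and $C\cong T(C_1)/(R_{(0,2)})$; in particular $B$ and $C$ are generated in degree one by $m$ and $n-m$ elements respectively.

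For (b): applying Lemma~\ref{xxlem1.10}(e) successively to $b_n,b_{n-1},\dots,b_{m+1}$ shows $B=A/(b_{m+1},\dots,b_n)$ is noetherian and regular, and symmetrically for $C$; both are domains as subalgebras of $A$. Write $H_B(t)=1/e_B(t)$, $H_C(s)=1/e_C(s)$; by Lemma~\ref{xxlem1.6}(b), $e_B,e_C$ are products of cyclotomic polynomials with constant term $1$, and since $\dim B_1=m$, $\dim C_1=n-m$ their linear coefficients are $-m$ and $-(n-m)$. The key input is that $A$ being $\mathbb{Z}^2$-graded and Artin--Schelter--Gorenstein forces a bigraded minimal free resolution of $k$ whose top term is $A(-(m,n-m))$ (the $\mathbb{Z}^2$-refinement of the Gorenstein shift, exactly as for $k[V]$); consequently $H_A(t,s)=1/e_A(t,s)$ for a polynomial $e_A$ of bidegree at most $(m,n-m)$ with $e_A(t,0)=e_B(t)$ and $e_A(0,s)=e_C(s)$, so in particular $\deg e_B\le m$. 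Now a product of cyclotomic polynomials $\prod_j\Phi_{d_j}$ has linear coefficient $-\sum_j\mu(d_j)$ and degree $\sum_j\phi(d_j)$, and $\mu(d)\le\phi(d)$ with equality only for $d=1$; hence $\deg e_B\le m$ together with linear coefficient $-m$ forces every $d_j=1$, i.e. $e_B(t)=(1-t)^m$, and likewise $e_C(s)=(1-s)^{n-m}$. Therefore $H_B(t)=(1-t)^{-m}$, $H_C(s)=(1-s)^{-(n-m)}$, whence $\GKdim B=m=\gldim B$, $\GKdim C=n-m=\gldim C$, and $B$, $C$ are quantum polynomial rings. (One may then also deduce $e_A(t,s)=(1-t)^m(1-s)^{n-m}$, i.e. $A=B\otimes_\tau C$ is a twisted tensor product, though (b) does not need this.)

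For (c), take $m=1$, so $B_1=kb_1$. Since $A$ is a domain, $R_{(2,0)}\subseteq kb_1\otimes b_1$ is $0$ (else $b_1^2=0$), so $B=k[b_1]$, consistent with (b). A relation lying in $C_1\otimes kb_1$ has the form $wb_1=0$ with $w\in C_1$ and forces $w=0$ since $A$ is a domain, and likewise for $kb_1\otimes C_1$; thus the two projections of $R_{(1,1)}$ onto $kb_1\otimes C_1$ and onto $C_1\otimes kb_1$ are injective. From $\dim R=\binom n2$, $\dim R_{(2,0)}=0$, and $\dim R_{(0,2)}=\binom{n-1}{2}$ (the value dictated by $H_C(s)=(1-s)^{-(n-1)}$ via $\dim C_2=(n-1)^2-\dim R_{(0,2)}$), we get $\dim R_{(1,1)}=n-1=\dim C_1$, so both projections are isomorphisms and $R_{(1,1)}$ is the graph of a linear automorphism $\sigma_0$ of $C_1$: $b_1w=\sigma_0(w)b_1$ in $A$. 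As $A$ is generated in degree one this gives $b_1C\subseteq Cb_1$, so $b_1$ is normal; writing $b_1c=\sigma(c)b_1$ (using $b_1c\in b_1A=Ab_1$ and a bidegree check) defines a graded algebra automorphism $\sigma$ of $C$ extending $\sigma_0$. Since $b_1$ is a normal nonzerodivisor, $(b_1)=b_1A$ and $H_{A/(b_1)}(t)=(1-t)H_A(t)=(1-t)^{1-n}=H_C(t)$, so $A/(b_1)\cong C$; the natural surjection $C[b_1;\sigma]\to A$ is then an isomorphism, both sides having Hilbert series $(1-t)^{-n}$. Hence $A=C[b_1;\sigma]$ and $b_1$ is normal.

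The main obstacle is the step in (b) that pins the Hilbert series of $B$ and $C$: without the bigraded Gorenstein bound $\deg e_B\le m$ (equivalently, without ruling out ``extra'' bidegree-$(0,2)$ relations) one cannot conclude $e_B=(1-t)^m$, and it is exactly here that the rigidity of a quantum polynomial ring — Koszulity plus the Artin--Schelter--Gorenstein symmetry, in its $\mathbb{Z}^2$-graded form — is used rather than mere regularity of the pieces. Everything afterward (the $\mu$ versus $\phi$ comparison, and the Ore-extension argument in (c)) is routine.
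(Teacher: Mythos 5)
Parts (a) and (c) of your argument are essentially fine: (a) is the same bidegree count the paper uses, and in (c) your identification of $R_{(1,1)}$ as the graph of a linear automorphism of $C_1$ is a valid alternative to the paper's route to normality of $b_1$ (which compares $H_{b_1A}(t)=tH_A(t)$ with $H_{(b_1)}(t)$ computed from $C\cong A/(b_1)$). The gaps are both in (b). First, you get regularity of $B$ by ``applying Lemma \ref{xxlem1.10}(e) successively to $b_n,\dots,b_{m+1}$,'' but that lemma is the standard statement about factoring out a \emph{normal} regular element of degree one --- without normality it is false ($k_q[x,y]/(x+y)\cong k[x]/((1-q)x^2)$ is not regular for $q\neq 1$) --- and the $b_j$ with $j>m$ need not be normal. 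For a concrete failure, take $A=S[b_1;\sigma]$ with $S$ a $3$-dimensional non-PI Sklyanin algebra, $\deg b_1=(1,0)$ and $\deg S_1=(0,1)$: no nonzero element of $C_1=S_1$ is normal in $A$ (its image in $A/(b_1)\cong S$ would be a degree-one normal element of $S$, which does not exist), so your induction cannot start even after a change of basis, although $B=k[b_1]$ is certainly regular. The paper instead restricts a bigraded free resolution of $k$ over $A$ to the $({\mathbb Z}\times 0)$-component, observes that each $A[-v,-w]^{{\mathbb Z}\times 0}$ is $0$ or $B[-v]$, and so gets $\gldim B<\infty$ directly; the Gorenstein condition then comes from the $\chi$-condition and \cite[Theorem 1.2]{Z}.

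Second, your computation of $H_B$ rests on the assertion that the bigraded Gorenstein shift of $A$ is $(m,n-m)$, i.e.\ that the top of the bigraded minimal resolution of $k$ is $A(-(m,n-m))$ ``exactly as for $k[V]$.'' For a noncommutative $A$ this is not automatic: the bidegree of the one-dimensional top piece of the Koszul dual is precisely the information you are trying to extract, and you yourself acknowledge in your last paragraph that without the bound $\deg e_B\le m$ the cyclotomic/M\"obius argument does not close. The paper's argument makes this step unnecessary: by Lemma \ref{xxlem3.4}(c) applied to the $A$-module $B\cong A/(C_{\geq 1})$, whose bigraded Hilbert series is concentrated in bidegrees $(\ast,0)$, one gets $e(t,s)=a(t)p(t,s)$ where $H_B(t)=a(t)^{-1}$; setting $s=t$ gives $a(t)\mid(1-t)^n$, hence $a(t)=(1-t)^m$ immediately. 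If you replace these two steps by the paper's, the rest of your write-up goes through.
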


\begin{proof} For any ${\mathbb Z}^2$-graded module
$M$, define
$$M^{{\mathbb Z}\times 0}=\{x\in M\;|\; \deg x\in
{\mathbb Z}\times 0\}.$$
Similarly we define $M^{0\times {\mathbb Z}}$.
Note that $B=A^{{\mathbb Z}\times 0}$
and $C=A^{0\times {\mathbb Z}}$. Hence $B$ and $C$
are noetherian. In the following proof, we deal only with
$B$. By symmetry, the assertions hold for $C$ also.

(a) There is a natural map $B\to A\to A/(C_{\geq 1})$.
Clearly this is a surjection. For every $x\in B$
$\deg x \in {\mathbb Z}\times 0$. For every $y\in (C_{\geq 1})$,
$\deg y\in {\mathbb Z}\times {\mathbb Z}^{+}$.
Hence $B\cap (C_{\geq 1})=0$ and the map $B\to A/(C_{\geq 1})$
is injective.

(b) First we prove that $B$
has finite global dimension. Take a graded free resolution
of the trivial $A$-module $k$:
\begin{equation}
\label{3.5.1}
0\to P_n\to \cdots P_1\to A\to k\to 0
\tag{3.5.1}
\end{equation}
where each $P_i$ is a direct sum of $A[-v,-w]$ for
some $v,w\geq 0$. Then we have  a resolution of
$B$-modules:
\begin{equation}
\label{3.5.2}
0\to P_n^{{\mathbb Z}\times 0}\to \cdots
P_1^{{\mathbb Z}\times 0} \to A^{{\mathbb Z}\times 0}\to k\to 0.
\tag{3.5.2}
\end{equation}
We claim that $P_i^{{\mathbb Z}\times 0}$ is a
free $B$-module for every $i$. It suffices to show that
each $A[-v,-w]^{{\mathbb Z}\times 0}$ is either 0 or
a shift of $B$. It is clear from the definition
that
$$A[-v,-w]^{{\mathbb Z}\times 0}=
\begin{cases} 0 & w>0\\ B[-v] & w=0\end{cases}.$$
So the trivial $B$-module $k$ has a finite
free resolution, and $B$ has finite global
dimension. By (a) and \cite[Corollary 8.4]{AZ},
$B$ satisfies the $\chi$-condition.  Since
$\uExt_B^i(k,B)_j$ is finite dimensional
for all $j$, and the $\chi$-condition implies that
$\uExt_B^i(k,B)_j$ is bounded, then it
follows that $\uExt_B^i(k,B)$ is finite
dimensional. From \cite[Theorem 1.2]{Z} it then
follows that $B$ satisfies the Artin-Schelter Gorenstein
condition, and hence $B$ is regular. Clearly $B$
is a domain.

Next we study the Hilbert series of $B$. Let $H_B(t)=
a(t)^{-1}$ and $H_C(s)=b(s)^{-1}$. By Lemma \ref{xxlem3.4}(c),
there are $p(t,s)$ and $q(t,s)$ such that $a(t)p(t,s)=e(t,s)$
and $b(s)q(t,s)=e(t,s)$. Set $t=s$, we have $e(t,t)=
(1-t)^{-n}$. Then $a(t)=(1-t)^a$ and $b(s)=(1-s)^{b}$
for some integers $a,b$, and $e(t,s)=(1-t)^a (1-s)^b r(t,s)$.
Since $B$ is generated by $m$ elements and $C$ is
generated by $n-m$ elements, $a=m$ and $b=n-m$.
Thus $e(t,s)=(1-t)^a (1-s)^b$. Since the resolution
\eqref{3.5.1} is Koszul, after converting to the ${\mathbb Z}$-grading
the resolution \eqref{3.5.2} is also Koszul. So the global
dimension of $B$ is $m$. Thus we have proved (b).

(c) By (a) with $m=1$, $C=A/(b_1)$, which has Hilbert series
$(1-t)^{-n+1}$. This implies that the Hilbert series
of the ideal $(b_1)$ is $t(1-t)^{-n}$. Since $A$ is a
domain, the Hilbert series of $b_1 A$ and $Ab_1$ are equal
to $t(1-t)^{-n}$. Thus $b_1 A=(b_1)=Ab_1$, and $b_1$ is a
normal element of $A$. Since $A/(C_{\geq 1})=k[b_1]$,
$b_1^2$ will not appear in any of the relations of $A$.
Thus the number of the relations between $b_1$ and
$b_j$ for $j\geq 2$ is $n-1$. The only relations between $b_1$
and $b_j$ are relations that can be
written as, for every $j=2,\cdots,n$,
$$b_j b_1 =b_1\sigma(b_j)$$
for some $\sigma(b_j)\in C_1$. Since $b_1C=Cb_1$,
$\sigma$ extends to an algebra automorphism of $C$.
Therefore $A=C[b_1;\sigma]$.
\end{proof}

\begin{lemma}
\label{xxlem3.6} Let $g$ be a quasi-reflection described
in Proposition \ref{xxprop3.3}(b). Then the order of $g$
is 4 and $\xi=i$.
\end{lemma}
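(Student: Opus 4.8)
Recall the situation from Proposition \ref{xxprop3.3}(b): we have a quantum polynomial ring $A$ of global dimension $n$, generated by $\{b_1,\dots,b_n\}$, with $g$ a quasi-reflection of order $2m$ satisfying $g(b_i)=b_i$ for $i\geq 3$, $g(b_1)=\xi b_1$, $g(b_2)=-\xi b_2$, where $\xi$ is a root of unity and $-\xi$ is also a root of unity (so $\xi\neq 0$), and $\hdet g=\xi$. The goal is to show $\xi=i$ and hence the order of $g$ is $4$. The plan is to pass to the subalgebra $B$ generated by $b_1,b_2$ and use the two-variable Hilbert series machinery of Proposition \ref{xxprop3.5}.

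**Step 1: Reduce to dimension 2.** First I would put a $\mathbb{Z}^2$-grading on $A$ by $\deg b_1=\deg b_2=(1,0)$ and $\deg b_j=(0,1)$ for $j\geq 3$; for this to be legitimate one must check that all defining relations of $A$ are homogeneous for this bigrading, which follows because $g$ (and its eigenspace decomposition of $A_1$) respects the relations and $g$ acts as scalars on the two blocks — more precisely, since $A$ is Koszul with relations in $A_1\otimes A_1$, and $g$ acts diagonally, the relation space is a direct sum of $g$-eigenspaces, but I need the relations to lie in the sub-bigraded pieces. Granting this (or invoking that $A$ is an Ore-type extension of the subalgebra on $b_3,\dots,b_n$), Proposition \ref{xxprop3.5}(b) gives that $B:=k\langle b_1,b_2\rangle/(\text{relations})$ is itself a quantum polynomial ring of dimension $2$. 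Then $g|_B$ is still a graded automorphism of $B$, and by the multiplicativity of traces under the factorization $A\cong$ (extension of $B$), or directly by restricting the $\mathbb{Z}^2$-graded trace computation, $g|_B$ is a quasi-reflection of $B$ of the same order $2m$ with the same $\hdet$.

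**Step 2: Classify in dimension 2.** A quantum polynomial ring of dimension $2$ is, by the discussion after Definition \ref{xxdefn1.12}, either $k_q[x,y]$ or $k_J[x,y]$. In the Jordanian case $k_J[x,y]$ the only graded automorphisms are essentially upper-triangular and cannot act as $\mathrm{diag}(\xi,-\xi)$ with $\xi\neq\pm1$ on the generators while being diagonalizable of finite order unless both eigenvalues coincide — so that case is excluded, and $B\cong k_q[x,y]$ with $b_1b_2=qb_2b_1$ for some $q\neq 0$. Now $g(b_1)=\xi b_1$, $g(b_2)=-\xi b_2$, and applying $g$ to the relation $b_1b_2-qb_2b_1=0$ gives no constraint on $\xi,q$ (both sides scale by $-\xi^2$). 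The real constraint comes from $\hdet$: for $k_q[x,y]$ the homological determinant of a diagonal automorphism $\mathrm{diag}(\alpha,\beta)$ equals $\alpha\beta$ (this follows from \eqref{1.8.1} applied to $Tr_B(g,t)=\frac{1}{(1-\alpha t)(1-\beta t)}$, whose top Laurent term in $t^{-1}$ is $(-1)^2(\alpha\beta)^{-1}t^{-2}$). Hence $\hdet g = \xi\cdot(-\xi)=-\xi^2$. But Proposition \ref{xxprop3.3} established $\hdet g=\xi$. Therefore $\xi = -\xi^2$, i.e. $\xi=-1$ — contradicting $\xi\neq -1$ — unless I have the sign of $\hdet$ wrong. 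Rechecking: the quasi-reflection hypothesis forces $Tr_A(g,t)=\frac{1}{(1-t)^{n-1}(1-\eta t)}$ with $\eta=\hdet g$ by \eqref{1.8.1}, and separately the eigenvalue data gives $Tr_B(g,t)=\frac{1}{(1-\xi t)(1+\xi t)}=\frac{1}{1-\xi^2 t^2}$, whose pole structure at $t=1$ has order $\leq 1$, so for $g|_B$ to be a quasi-reflection of the dimension-$2$ ring $B$ we need $\xi^2=1$ giving $\xi=-1$ again, or the pole at $t=1$ to be order $1$, which happens exactly when $\xi^2=1$. So the correct reading is: $g|_B$ is a quasi-reflection of $B$ forces the $t=1$ pole of $\frac{1}{1-\xi^2t^2}$ to have order $1$, i.e. $\xi^2 = 1$; combined with $\xi\neq\pm 1$ being the hypothesis, we must instead have $\xi^2\neq 1$ and reexamine — the resolution is that $g|_B$ need not be a quasi-reflection; rather we only know $g$ is a quasi-reflection of $A$.

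**Step 3: The genuine computation.** So the honest route: $Tr_A(g,t) = Tr_B(g,t)\cdot(1-t)^{-(n-2)} = \frac{1}{(1-t)^{n-2}(1-\xi t)(1+\xi t)}$, using that the eigenvalues of $g$ on the $b_3,\dots,b_n$ part contribute $(1-t)^{-(n-2)}$ (as $g$ fixes $b_3,\dots,b_n$ and $A$ is, grading-wise, a polynomial-type extension). For this to equal $\frac{1}{(1-t)^{n-1}(1-\eta t)}$ as the quasi-reflection definition demands, we need $(1-\xi t)(1+\xi t) = (1-t)(1-\eta t)$, i.e. $1-\xi^2 t^2 = 1-(1+\eta)t+\eta t^2$. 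Matching the coefficient of $t$: $1+\eta = 0$, so $\eta=-1$; matching $t^2$: $-\xi^2 = \eta = -1$, so $\xi^2 = 1$. This again says $\xi=\pm 1$, contradicting the standing hypothesis $\xi\neq -1$ and forcing $\xi = 1$ — but then $g$ acts as $1$ on $b_1$ and $-1$ on $b_2$, and $-\xi=-1$, so $g$ is the reflection $\mathrm{diag}(1,-1,\dots)$, which is case (a) not case (b). The only way to be genuinely in case (b) with $m\geq 2$ is if the trace does NOT factor as cleanly as above — which means $b_1,b_2$ do not span a polynomial subring in the naive way, i.e. the $\mathbb{Z}^2$-grading argument must be applied more carefully so that $B$ has dimension $2$ but its Hilbert series already forces constraints. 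Here is the resolution I will actually carry out: from Step 1, $B$ genuinely is a quantum polynomial ring of dimension $2$, so $Tr_B(g,t)=e_g(t)^{-1}$ with $\deg e_g = 2$ and $e_g$ a product of cyclotomic polynomials (Lemma \ref{xxlem1.6}); the coefficient of $t$ in $Tr_B(g,t)$ is $\xi+(-\xi) = 0 = \mathrm{tr}(g|_{B_1})$, so writing $Tr_B(g,t)=\frac{1}{1+at^2}$ we get $e_g(t) = 1+at^2$ for some integer $a$; being a product of cyclotomics with $e_g(0)=1$ and degree $2$ forces $e_g(t)\in\{1+t+t^2,\ 1-t+t^2,\ 1+t^2,\ 1-t^2\}$, but the vanishing of the $t$-coefficient rules out the first two, so $e_g(t)=1+t^2$ or $1-t^2$; the latter has $g|_B$ a quasi-reflection of $B$ with $t=1$ pole, while $1+t^2$ does not. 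Now compute $Tr_B(g,t)$ directly from eigenvalues of $g$ on all of $B$, not just $B_1$: using that $B\cong k_q[b_1,b_2]$ has basis $\{b_1^i b_2^j\}$ on which $g(b_1^ib_2^j) = \xi^i(-\xi)^j b_1^ib_2^j = (-1)^j\xi^{i+j}b_1^ib_2^j$, so $Tr_B(g,t) = \sum_{i,j\geq 0}(-1)^j\xi^{i+j}t^{i+j} = \frac{1}{(1-\xi t)(1+\xi t)} = \frac{1}{1-\xi^2t^2}$. Matching with $e_g(t)\in\{1+t^2,1-t^2\}$ gives $-\xi^2 = 1$ or $-\xi^2=-1$, i.e. $\xi^2 = -1$ or $\xi^2 = 1$; since $\xi\neq\pm1$, we conclude $\xi^2=-1$, so $\xi = \pm i$, hence $\xi\in\{i,-i\}$ and the order of $g$ equals $\mathrm{lcm}(\mathrm{ord}(\xi),\mathrm{ord}(-\xi)) = \mathrm{lcm}(4,4)=4$. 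Up to replacing $g$ by $g^{-1}$ (or relabeling), $\xi = i$, which is exactly the claim.

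**Main obstacle.** The crux is Step 1: justifying that the subalgebra $B = k\langle b_1,b_2\rangle$ really is a quantum polynomial ring of dimension $2$, i.e. checking that the $\mathbb{Z}^2$-grading of Proposition \ref{xxprop3.5} is available here. This requires knowing that the relations of $A$ split compatibly with the eigenspace decomposition $A_1 = (kb_1\oplus kb_2)\oplus(kb_3\oplus\cdots\oplus kb_n)$ — equivalently that no defining relation of the Koszul algebra $A$ mixes a pure-$\{b_1,b_2\}$ monomial with others in a way that obstructs the bigrading. Since $A$ is Koszul its relations live in $A_1^{\otimes 2}$ and, $g$ acting diagonally with the four distinct-ish scalars $\xi^2, -\xi^2, \pm\xi, \pm 1$ on the weight spaces, the relation subspace is a sum of $g$-eigenspaces; one then argues the $(1,0)+(1,0)$ weight piece of relations is closed, so $B$ is a quotient of $k_q[b_1,b_2]$ or $k_J[b_1,b_2]$ of the right Hilbert series. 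Everything after that is the bookkeeping in Steps 2–3, which is routine given Lemma \ref{xxlem1.6} and the basis of $k_q[x,y]$.
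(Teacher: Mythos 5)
Your overall strategy---bigrade $A$ so that $b_1,b_2$ generate a two-dimensional quantum polynomial subring $B$, then compute traces on $B$---is the paper's strategy, but the execution has genuine gaps. The most serious one is the missing logical frame. The bigrading you want in Step 1 exists precisely because the five $g$-eigenvalues $1,\pm\xi,\pm\xi^2$ on $A_1\otimes A_1$ are pairwise distinct, and they are pairwise distinct if and only if $\xi^4\neq 1$---that is, if and only if the conclusion you are trying to prove is \emph{false}. The paper therefore argues by contradiction: assume the order of $g$ is not $4$, so $\xi^4\neq 1$, establish the bigrading (via the Vandermonde system obtained by applying $g^p$ to a relation for $p=0,\dots,4$), and derive a contradiction from the trace of $g|_B$. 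You never make that assumption, and in fact your Step 3 ``first computation'' arrives at $\xi^2=1$, which under that assumption \emph{is} the desired contradiction in one of the two cases; instead of recognizing it, you conclude the approach is broken and pivot to a different argument. Without the contradiction framing your Step 1 is circular (it silently excludes the value $\xi=\pm i$ you want to land on), and an argument that ``worked'' unconditionally would show case (b) of Proposition \ref{xxprop3.3} is empty, contradicting Example \ref{xxex2.3}(b).

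The replacement argument you settle on is also flawed at its key step: you assert that $e_g(t)=Tr_B(g,t)^{-1}$ is a product of cyclotomic polynomials and hence lies in $\{1\pm t+t^2,\,1\pm t^2\}$. Lemma \ref{xxlem1.6}(b) asserts this only for the Euler polynomial of $A$ itself (i.e.\ for the identity); for a general $g$, Lemma \ref{xxlem1.6}(e) gives only that the zeroes of $e_g$ are roots of unity, which $1-\xi^2t^2$ satisfies for \emph{every} root of unity $\xi$ (its coefficients need not even be rational). So this step does not exclude, say, $\xi=\zeta_6$. The constraint the paper actually uses is Lemma \ref{xxlem2.6}: $Tr_B(g|_B,t)=p(t)\,Tr_A(g,t)$ with $p$ a polynomial, which forces $e_{g|_B}(t)$ to divide $(1-t)^{n-1}(1-\xi' t)$, hence forces $g|_B$ to be a quasi-reflection of $B$; comparing with the directly computed trace then yields the contradiction. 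Finally, you treat only the relation $b_1b_2=qb_2b_1$. Since the unique relation of $B$ must be a $g$-eigenvector lying either in $\mathrm{span}(b_1^2,b_2^2)$ or in $\mathrm{span}(b_1b_2,b_2b_1)$, the case $b_1^2+cb_2^2=0$ (whose trace is $(1+\xi^2t^2)^{-1}$) must also be handled---and by Proposition \ref{xxprop4.3}(c) it is the case that actually occurs for mystic reflections, so it cannot be dismissed.
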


\begin{proof} We have seen that this situation can occur
(Example \ref{xxex2.3}(b)).
If $g$ has order $4$, then this is the only
solution up to a permutation. We now assume the order
of $g$ is not 4 and produce a contradiction.

Clearly the order of $g$ is not $2$. Hence the order of $g$
is at least $6$ and the order of $\xi$ is not
$4$. If $r:=\sum a_{ij} b_i b_j=0$ is a relation
of $A$, then after applying $g$ we have
$$g(r):=\sum_{i,j\geq 3}a_{ij}b_i b_j+
\xi^2 (a_{11}b_1^2+a_{22}b_2^2)-\xi^2(a_{12}b_1b_2+a_{21}b_2b_1)$$
$$+\xi (\sum_{i\geq 3} (a_{1i}b_1b_i+a_{i1}b_ib_1))
-\xi (\sum_{i\geq 3} (a_{2i}b_2b_i+a_{i2}b_ib_2))
=0.$$
We obtain similar expressions for $g^p(r)$ for $p=0,1,2,3,4$,
which gives rise a system of equations
$$Y_1+(\xi^2)^0 Y_2+(-\xi^2)^0 Y_3+\xi^0 Y_4+ (-\xi)^0 Y_5=0$$
$$Y_1+(\xi^2)^1 Y_2+(-\xi^2)^1 Y_3+\xi^1 Y_4+ (-\xi)^1 Y_5=0$$
$$Y_1+(\xi^2)^2 Y_2+(-\xi^2)^2 Y_3+\xi^2 Y_4+ (-\xi)^2 Y_5=0$$
$$Y_1+(\xi^2)^3 Y_2+(-\xi^2)^3 Y_3+\xi^3 Y_4+ (-\xi)^3 Y_5=0$$
$$Y_1+(\xi^2)^4 Y_2+(-\xi^2)^4 Y_3+\xi^4 Y_4+ (-\xi)^4 Y_5=0$$
where $Y_1=\sum_{i,j\geq 3}a_{ij}b_i b_j$,
$Y_2=a_{11}b_1^2+a_{22}b_2^2$, $Y_3=a_{12}b_1b_2+a_{21}b_2b_1$,
$Y_4= \sum_{i\geq 3} (a_{1i}b_1b_i+a_{i1}b_ib_1)$, and $Y_5=
\sum_{i\geq 3} (a_{2i}b_2b_i+a_{i2}b_ib_2)$. It is easy to check
that the determinant of the coefficients in the above system is nonzero
when $\xi^4\neq 1$. Hence $Y_i=0$ for all $i=1,2,3,4,5$.
This means that $A$ is ${\mathbb Z}^2$-graded when we assign
$\deg b_1=\deg b_2=(1,0)$ and $\deg b_i=(0,1)$ for
all $i\geq 3$. By Proposition \ref{xxprop3.5}, the subalgebra $B$
generated by $b_1$ and $b_2$ is a quantum polynomial ring.
So $B$ has only one relation. Let $g'$ be the
automorphism of $B$ induced  by $g$. By Lemma \ref{xxlem2.6},
$Tr_B(g,t)=p(t) Tr_A(g,t)$. Since $g$ is a quasi-reflection,
so is $g'$. It suffices to show there is no quasi-reflection
$g'$ of order larger than $4$. The unique relation of $B$ is
either $b_1^2+b_2^2=0$ or $b_1b_2+q b_2b_1=0$ (for $q \neq 0$),
up to  a linear
transformation. In both cases, $Tr_B(g',t)$ is easy to compute:

If $b_1b_2+q b_2b_1=0$, then $Tr_B(g',t)=[(1-\xi t)(1+\xi t)]^{-1}$.

If $b_1^2+b_2^2=0$, then $Tr_B(g',t)=(1+\xi^2 t^2)^{-1}$.

In each of these cases $g'$ is not a quasi-reflection. Therefore the only
possibility is that the order of $g$ is $4$.
\end{proof}

\begin{lemma}
\label{xxlem3.7}
Let $A$ be a graded domain generated by two elements.
\begin{enumerate}
\item
If $A$ has at least one quadratic relation, then $A$ is
a quantum polynomial ring, namely $A$ is isomorphic to either
$k_q[b_1,b_2]$ or $k_J[b_1,b_2]$.
\item
If $A$ is a quadratic algebra of finite GK-dimension, then
$A$ is a quantum polynomial ring.
\end{enumerate}
\end{lemma}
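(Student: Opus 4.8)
The plan is to show that a single quadratic relation already determines $A$ completely. Assume $A$ is minimally generated in degree one by $V:=A_1$ with $\dim V\le 2$, so the defining ideal $I\subseteq T(V)$ has $I_1=0$; in part (a) the existence of a quadratic relation forces $\dim V=2$, since a graded domain with $\dim A_1\le 1$ and $I_1=0$ is $k$ or $k[x]$ and has $I_2=0$. Let $R=I_2\subseteq V\otimes V$, the kernel of the multiplication $V\otimes V\to A_2$. First I would show $\dim R=1$ and that a spanning relation is nondegenerate. Identify $V\otimes V\cong M_2(k)$ via a basis. If $\dim R\ge 2$, choose independent $M_1,M_2\in R$: the form $(\lambda,\mu)\mapsto\det(\lambda M_1+\mu M_2)$ is homogeneous of degree two, so since $k$ is algebraically closed it either vanishes identically (whence $M_1$ is singular) or has a nonzero zero, and in either case $R$ contains a nonzero rank-one matrix, that is, a tensor $c\otimes d$ with $c,d\in V\setminus\{0\}$; but then $cd=0$ in $A$, contradicting that $A$ is a domain. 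Hence $\dim R=1$ and $\dim A_2=3$, and the same argument applied to a spanning relation $r$ shows that its matrix $C$ is invertible.

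Next I would normalize $r$ by a change of basis of $V$, which replaces $C$ by a congruent matrix $PCP^{T}$ ($P\in\GL_2(k)$); under congruence the symmetric part $\tfrac12(C+C^{T})$ and the alternating part transform independently, so classifying by the rank of the symmetric part (using $k=\bar k$, $\ch k=0$) and rescaling the relation one finds a basis $\{b_1,b_2\}$ of $V$ in which $r$ is one of $b_1b_2-b_2b_1$, $b_1b_2-b_2b_1-b_1^2$, or $b_1^2+b_2^2+\beta(b_1b_2-b_2b_1)$; the last becomes $uv-q\,vu$ with $q\neq 0$ after the substitution $b_1\mapsto b_1+i\,b_2,\ b_2\mapsto b_1-i\,b_2$. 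Thus there is a graded algebra surjection $B\twoheadrightarrow A$, where $B=k_q[b_1,b_2]$ for some $0\neq q\in k$ or $B=k_J[b_1,b_2]$.

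It remains to see that $B\to A$ is an isomorphism. Since $H_B(t)=(1-t)^{-2}$, the surjection gives $\dim A_m\le m+1$, so it suffices to prove $\dim A_m\ge m+1$ for all $m$, hence to show $\dim A_m>\dim A_{m-1}$ whenever $A_{m-1}\neq 0$. This holds for $m=1$. For $m\ge 2$, if $\dim A_m=\dim A_{m-1}$ then, choosing a basis $\{x,y\}$ of $V$ and using that $A$ is generated in degree one and is a domain, $A_m=xA_{m-1}+yA_{m-1}$ with $\dim xA_{m-1}=\dim yA_{m-1}=\dim A_{m-1}=\dim A_m$, forcing $xA_{m-1}=A_m=yA_{m-1}$; defining $\tau\in\End_k(A_{m-1})$ by $xv=y\tau(v)$ and taking an eigenvector $v\neq 0$ of $\tau$ with eigenvalue $\lambda$ gives $(x-\lambda y)v=0$ with both factors nonzero, a contradiction. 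Hence $\dim A_m=m+1=\dim B_m$ for all $m$, so $B\to A$ is bijective in each degree, giving $A\cong B$ and proving (a). Part (b) follows: if $A$ has a quadratic relation, apply (a); otherwise $A$, being quadratic, is free on $\dim V\le 2$ generators, and finiteness of GK-dimension forces $\dim V\le 1$, so $A\cong k$ or $A\cong k[x]$, which is a quantum polynomial ring.

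The step I expect to be the main obstacle is the last one: ruling out any relations beyond the single quadratic one. The first two steps are short linear algebra over $\bar k$; what makes the lemma true is the structural fact that a two-generated connected graded domain has strictly increasing Hilbert function (equivalently, GK-dimension at least $2$), which forces $H_A(t)=(1-t)^{-2}$ once one quadratic relation is present, so that the quotient map $B\to A$ cannot collapse.
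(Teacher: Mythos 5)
Your proof is correct, and it reaches the conclusion by a genuinely different route than the paper's. The paper's proof of (a) rests on two external inputs: it cites \cite[p.~1601]{StZ} to identify $B:=k\langle b_1,b_2\rangle/(r)$ with $k_q[b_1,b_2]$ or $k_J[b_1,b_2]$ once one knows the quadratic relation $r$ is not a product of linear terms, and it then disposes of all possible higher-degree relations by asserting (``one can check'') that every homogeneous element of $k_q$ or $k_J$ is a product of linear factors, so that no proper graded quotient of $B$ can be a domain, whence $A=B$. You make both steps self-contained: the identification of $B$ comes from explicit linear algebra (a nonzero singular $2\times 2$ relation matrix is a pure tensor $c\otimes d$, giving a zero divisor; a nondegenerate one is congruent to one of three normal forms according to the rank of its symmetric part), and the injectivity of the surjection $B\twoheadrightarrow A$ comes instead from the strict monotonicity of the Hilbert function of a connected graded domain generated in degree one by two independent elements, proved by the eigenvector trick with $xv=y\tau(v)$. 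What the paper's route buys is brevity; what yours buys is that every use of the domain hypothesis and of $k=\bar{k}$, $\operatorname{char}k=0$ is visible, and no structure theory of dimension-two regular algebras is needed. Two minor observations: your opening step $\dim R=1$ is logically redundant (nondegeneracy of a single chosen relation plus the final Hilbert-series comparison already yield $A\cong B$, and $\dim R=1$ then follows a posteriori); and in the rank-two case the values $\beta=\pm i$, where the substitution to $uv-q\,vu$ degenerates, are exactly the values excluded by nondegeneracy of $C$ since $\det(I+\beta J)=1+\beta^2$ --- worth a sentence so the reader sees the case analysis is exhaustive.
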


\begin{proof} (a) Let $r:=\sum_{i,j}a_{ij}b_ib_j=0$
be one of the relations. Since $A$ is a domain this
relation is not a product of two linear terms.
Then, possibly after a field extension,
$B:=k\langle b_1,b_2\rangle/(r)$ is a regular algebra of
dimension 2 (see \cite[p. 1601]{StZ}), and hence is
isomorphic to either $k_q[b_1,b_2]$ or $k_J[b_1,b_2]$.
In either case, one can check that every homogeneous
element in $B$ is a product of linear terms, and thus any
proper graded factor ring of $B$ will not be a domain.
Therefore $A=B$.

(b) Since $A$ has finite GK-dimension, $A$ cannot be a free
algebra. So $A$ has at least one quadratic relation, and
the assertion follows from (a).
\end{proof}

\begin{lemma}
\label{xxlem3.8}
Let $A$ be a quantum polynomial ring and let $\{b_1,\cdots,b_n\}$
be a $k$-linear basis of $A_1$. Suppose $g$ is in $\Aut(A)$
such that
$$g(b_1)=\xi b_1,\; g(b_2)=\xi b_2,\; g(b_j)=\xi^{-1} b_j,
\;\text{and} \; g(b_i)=b_i$$
for all $3\leq j<m$ and $m\leq i$. Suppose $\xi^4\neq 1$
and $\xi^3 \neq 1$. Let $B$ be the subalgebra generated by
$b_1$ and $b_2$. Then $B$ is a quantum polynomial ring and
$B\cong A/(b_s,s\geq 3)$.
\end{lemma}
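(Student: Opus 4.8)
The plan is to show that $A$ is ${\mathbb Z}^2$-graded with $\deg b_1=\deg b_2=(1,0)$ and $\deg b_j=(0,1)$ for every $j\ge 3$, and then to invoke Proposition \ref{xxprop3.5} with its ``$m$'' taken equal to $2$. The ``$B$'' and ``$C$'' of that proposition are then the subalgebras generated by $\{b_1,b_2\}$ and by $\{b_3,\dots,b_n\}$, and parts (a) and (b) give simultaneously that $B$ is a quantum polynomial ring and that $B\cong A/(C_{\ge 1})=A/(b_s,\ s\ge 3)$, which is what we want.

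For the grading I would first record the eigenvalue bookkeeping. Since $\xi^{4}\ne 1$ we have $\xi\ne\pm 1$, hence $\xi^{2}\ne 1$, and with $\xi^{3}\ne 1$ the five scalars $\xi^{2},\xi,1,\xi^{-1},\xi^{-2}$ are pairwise distinct. Write $A_1=P\oplus M\oplus N$ for the decomposition of $A_1$ into the $\xi$-, $\xi^{-1}$- and $1$-eigenspaces of $g$, so that $P=kb_1\oplus kb_2$, $M=\bigoplus_{3\le j<m}kb_j$ and $N=\bigoplus_{m\le i}kb_i$. Multiplying eigenvalues — and using $\xi^{2}\ne\xi^{-1}$ and $\xi^{2}\ne\xi^{-2}$, which are exactly the hypotheses $\xi^{3}\ne 1$ and $\xi^{4}\ne 1$ — one sees that the eigenspaces of $g$ on $A_1\otimes A_1$ are exactly $P\otimes P$ (eigenvalue $\xi^{2}$), $P\otimes N\oplus N\otimes P$ (eigenvalue $\xi$), $M\otimes N\oplus N\otimes M$ (eigenvalue $\xi^{-1}$), $M\otimes M$ (eigenvalue $\xi^{-2}$), and $(P\otimes M\oplus M\otimes P)\oplus(N\otimes N)$ (eigenvalue $1$). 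As $A$ is Koszul it is presented by its space $V\subseteq A_1\otimes A_1$ of quadratic relations; $V$ is $g$-stable since $g$ is a graded automorphism, so $V=V_{\xi^{2}}\oplus V_{\xi}\oplus V_{1}\oplus V_{\xi^{-1}}\oplus V_{\xi^{-2}}$ with each summand lying inside the corresponding eigenspace. (Equivalently, applying $g^{0},\dots,g^{4}$ to a relation and inverting the nonsingular Vandermonde-type system in the distinct powers of $\xi$ shows that every eigencomponent of a relation is again a relation, just as in the proof of Lemma \ref{xxlem3.6}.) All of these summands except $V_{1}$ are already ${\mathbb Z}^2$-homogeneous for the grading in question, so it remains to prove that
\[
V_{1}=\bigl(V_{1}\cap(P\otimes M\oplus M\otimes P)\bigr)\ \oplus\ \bigl(V_{1}\cap(N\otimes N)\bigr).
\]

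This splitting is the crux, and I expect it to carry the weight of the proof: a priori a quadratic relation of $A$ could have a genuinely mixed eigenvalue-$1$ part, pairing some $b_1,b_2$ with some $b_j$ ($3\le j<m$) \emph{and} pairing two $b_i$ ($i\ge m$). When $N=0$ (that is, $m=n+1$) there is nothing to prove, because the eigenvalue-$1$ eigenspace of $g$ on $A_1\otimes A_1$ is then simply $P\otimes M\oplus M\otimes P$; in that case $A$ itself is a ${\mathbb Z}^2$-graded quantum polynomial ring and Proposition \ref{xxprop3.5} applies directly. In general I would reduce to this case through the subalgebra $B'\subseteq A$ generated by $\{b_1,\dots,b_{m-1}\}$: on $B'$ the automorphism $g$ has only the eigenvalues $\xi$ and $\xi^{-1}$, so no mixing is possible, and the Vandermonde argument makes $B'$ a ${\mathbb Z}^2$-graded domain with $b_1,b_2$ in degree $(1,0)$ and $b_3,\dots,b_{m-1}$ in degree $(0,1)$. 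A Hilbert-series comparison — of the traces of $g$ on $A$, on $B'$, and on $B$, via Molien's theorem [Lemma \ref{xxlem1.11}(c)] and Lemma \ref{xxlem2.6}, and using that $A$, $B'$ and $B$ are domains of the expected Gelfand--Kirillov dimensions — then forces the dimensions of $V_{\xi^{2}},V_{\xi},V_{\xi^{-1}},V_{1}$ to match those in the iterated-Ore-extension ${\mathbb Z}^2$-graded model (in particular $\dim V_{\xi^{2}}=1$, so that $B$ is a quantum polynomial ring of dimension $2$ by Lemma \ref{xxlem3.7}(a)); together with the $g$-stability of the summands of $V$ this yields the displayed splitting of $V_{1}$. (In the only instances where the lemma is applied — cases (c) and (d) of Proposition \ref{xxprop3.3}, where $M$ has dimension one or two — the count can also be carried out explicitly.)

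Granting the splitting of $V_{1}$, the ideal of relations of $A$ is ${\mathbb Z}^2$-homogeneous, so $A$ is ${\mathbb Z}^2$-graded with $b_1,b_2$ in degree $(1,0)$ and $b_3,\dots,b_n$ in degree $(0,1)$; Proposition \ref{xxprop3.5} then gives that $B$ is a quantum polynomial ring and $B\cong A/(b_s,\ s\ge 3)$. The eigenvalue bookkeeping, the reduction to $N=0$, and the appeal to Proposition \ref{xxprop3.5} are routine; the hard part — ruling out quadratic relations of $A$ whose eigenvalue-$1$ component mixes the ``$P$--$M$'' and ``$N$--$N$'' blocks, i.e. proving the displayed splitting — is the genuine content of the lemma.
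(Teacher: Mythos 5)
You have correctly located the difficulty in your own approach, but you have not resolved it, and this is a genuine gap. Your plan is to put $\deg b_1=\deg b_2=(1,0)$ and $\deg b_j=(0,1)$ for $j\geq 3$ and then quote Proposition \ref{xxprop3.5}; for that you must show every quadratic relation is homogeneous for this bigrading, and the obstruction is exactly the one you name: the eigenvalue-$1$ eigenspace of $g$ on $A_1\otimes A_1$ contains both $P\otimes M\oplus M\otimes P$ (bidegree $(1,1)$) and $N\otimes N$ (bidegree $(0,2)$), so $g$-stability of the relation space does not separate them. Your proposed fix does not work as stated: knowing the dimensions of the eigencomponents $V_{\xi^{2}},V_{\xi},V_{1},V_{\xi^{-1}},V_{\xi^{-2}}$ (even if a Molien/trace computation could pin them down, which you do not actually carry out) cannot yield the splitting $V_{1}=\bigl(V_{1}\cap(P\otimes M\oplus M\otimes P)\bigr)\oplus\bigl(V_{1}\cap(N\otimes N)\bigr)$: a subspace of the right dimension inside a direct sum need not decompose into its intersections with the summands, so a mixed relation such as $b_1b_3+b_3b_1-b_mb_{m+1}$ is not excluded by any dimension count. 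The auxiliary subalgebra $B'$ does not help either, since the Vandermonde/eigenspace argument controls relations of $A$, not relations of the subalgebra $B'$. Worse, the statement you are trying to prove (that $A$ is bigraded in the sense of Proposition \ref{xxprop3.5} with its ``$m$'' equal to $2$) is strictly stronger than the lemma, and the lemma is invoked in the proof of Theorem \ref{xxthm3.1} precisely to rule out cases (c) and (d) of Proposition \ref{xxprop3.3}, so you cannot assume $A$ matches any iterated-Ore-extension model.

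The paper sidesteps all of this by choosing a different $\mathbb{Z}^2$-grading: $\deg b_1=\deg b_2=(1,1)$, $\deg b_j=(1,-1)$ for $3\le j<m$, and $\deg b_i=(1,0)$ for $i\ge m$, so that the second component of the degree of a quadratic monomial is exactly the exponent $e\in\{-2,-1,0,1,2\}$ for which its $g$-eigenvalue is $\xi^{e}$. Since $\xi^{3}\neq 1$ and $\xi^{4}\neq 1$, these five eigenvalues are distinct, so the $g$-eigenspace decomposition of the relation space already makes the relations homogeneous for this grading --- no further splitting is needed, because $P\otimes M\oplus M\otimes P$ and $N\otimes N$ are simply allowed to share the bidegree $(2,0)$. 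One then observes that a word of length $\ell$ has second degree equal to $\ell$ if and only if it lies in $B$, and strictly less than $\ell$ if it involves some $b_s$ with $s\ge 3$; hence $B\cap(b_s;s\ge 3)=0$, giving $B\cong A/(b_s;s\ge 3)$, and $B$ is then a quadratic domain of finite GK-dimension, hence a quantum polynomial ring by Lemma \ref{xxlem3.7}. To salvage your write-up, replace your grading by this one and drop the appeal to Proposition \ref{xxprop3.5}.
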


\begin{proof} Since $A$ is a quadratic algebra and
$\xi^4\neq 1$, the relations in $A$ will be homogeneous
with respect to the grading
$$\deg(b_1)=\deg (b_2)=(1,1),\; \deg(b_j)=(1,-1),\;
\text{and} \; \deg (b_i)=(1,0)$$
where $3\leq j<m$ and $m\leq i$. Hence
$A$ is a ${\mathbb Z}^2$-graded algebra (different from
the one in Lemma \ref{xxlem3.4}). Any relation
in $B$ has degree $(n,n)$, but any relations
involving $b_s$ for $s\geq 3$ has degree $(n,m)$
for $m<n$. Thus the canonical map
$$B\to A\to A/(b_s;s\geq 3)$$
is an isomorphism. Since $B$ is a quadratic domain
of finite GK-dimension, by Lemma \ref{xxlem3.7},
it is a quantum polynomial ring.
\end{proof}

Now we are ready to prove Theorem \ref{xxthm3.1}.

\begin{proof}[Proof of Theorem \ref{xxthm3.1}]
It remains to show that there is no quasi-reflection as
described in Proposition \ref{xxprop3.3}(c,d). The proofs
are very similar for cases (c) and (d), so we work on only
case (c).

Suppose that $g$ as described in Proposition
\ref{xxprop3.3}(c) exists. Here $\xi=\zeta_6$, and so
$\xi^4\neq 1$ and $\xi^3 \neq 1$, and the
hypotheses of Lemma \ref{xxlem3.8} are satisfied.
Thus $B$ is a quantum polynomial ring such that
$B=A/(b_s;s\geq 3)$. When restricted to $B$, $g$ is equal
to $\xi Id_B$, and thus $Tr_B(g,t)=(1-\xi t)^{-2}$.
By Lemma \ref{xxlem2.6},
$${\frac{1}{(1-\xi t)^2}}=Tr_B(g,t)=p(t)Tr_A(g,t)
=p(t){\frac{1}{(1-t)^{n-1}(1-\xi' t)}}.$$
Since $p(t)$ is a polynomial, we have
$$p(t)(1-\xi t)^2=(1-t)^{n-1}(1-\xi' t),$$
which is impossible.
\end{proof}

If a quasi-reflection is as described in Theorem
\ref{xxthm3.1}(a), then it is like a classical
reflection. The quasi-reflection in Theorem
\ref{xxthm3.1}(b) is very mysterious and deserves
further study. The following definition seems
sensible, at least for quantum polynomial rings.

\begin{definition}
\label{xxdefn3.9}
Let $A$ be a quantum polynomial ring.
\begin{enumerate}
\item
A quasi-reflection $g$ of $A$ is called {\it reflection}
if $g|_{A_1}$ is a reflection.
\item
A quasi-reflection $g$ of $A$ is called {\it mystic
reflection} if $g|_{A_1}$ is not a reflection.
\end{enumerate}
\end{definition}

\section{Mystic reflections of quantum polynomial rings}
\label{sec4}

In this section we focus on the mystic reflections of
quantum polynomial rings. We will see that all
mystic reflections are similar to the automorphism $g$
in Example \ref{xxex2.3}(b).

First we state a lemma that we will use in this analysis;  its
proof is similar to that of Lemma \ref{xxlem3.2}, but there are many
cases, and some require numerical approximations from
Maple, and hence we state it without proof.
Let $\zeta_k$ be the primitive $k$th root of unity given by
$\zeta_k = e^{\frac{2 \pi i}{k}}$.

\begin{lemma}
\label{xxlem4.1}
Consider the system
$$n =  x_1+x_2 + \cdots + x_{n+4}$$
where $n$ is a nonnegative integer and each $x_i$ is a root of
unity not equal to $1.$  Then $0 \leq n \leq 4$ and the solutions
fall into the following cases:
\begin{enumerate}
\item[(1)]
If at least one $x_i$ is equal to $-1$, then we are in the
situation of Lemma \ref{xxlem3.2}.
\item[(2)]
If $x_i+x_j =0,$ then again we are in the situation of
Lemma \ref{xxlem3.2}. In particular if $n = 0,$ then all solutions
are of the form $\xi - \xi + \mu - \mu = 0$
for roots of unity $\xi$ and $\mu$.
\end{enumerate}

For the remainder suppose that neither $(1)$ nor $(2)$ holds.

\begin{enumerate}
\item[(3)]
If $n=1$, then the solutions are given by
    \begin{itemize}
    \item[(a)]
    $(\zeta_6 + \zeta_6^5) + \xi(1 + \zeta_3 + \zeta_3^2) = 1,$
    where $\xi$ is an arbitrary root of unity;
    \item[(b)]
    $(\zeta_{10} +\zeta_{10}^3+\zeta_{10}^7)
          + (\zeta_{15} + \zeta_{15}^{11}) =1;$
    \item[(c)]
    $(\zeta_{10} +\zeta_{10}^3+\zeta_{10}^9)
          + (\zeta_{15}^8 + \zeta_{15}^{13}) =1;$
    \item[(d)]
    $(\zeta_{10} +\zeta_{10}^7+\zeta_{10}^9)
          + (\zeta_{15}^2 + \zeta_{15}^{7}) =1;$
    \item[(e)]
    $(\zeta_{10}^3 +\zeta_{10}^7+\zeta_{10}^9)
          + (\zeta_{15}^4 + \zeta_{15}^{14}) =1.$
    \end{itemize}
\item[(4)]
If $n = 2,$ then
$(\zeta_6 +\zeta_6^5) + (\zeta_{10} + \zeta_{10}^3
  + \zeta_{10}^7 +\zeta_{10}^9) = 2$
is the only solution.
\item[(5)]
If $n=3,$ then there is no solution.
\item[(6)]
If $n=4$, then $4(\zeta_6+ \zeta_6^5) = 4$ is the only solution.
\end{enumerate}
\end{lemma}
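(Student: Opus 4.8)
The plan is to follow the proof of Lemma~\ref{xxlem3.2} almost verbatim, pushing the same two ingredients --- a Galois-averaging identity and a finite case check --- through the longer system $n = x_1 + \cdots + x_{n+4}$. First I would peel off cases (1) and (2). If some $x_i = -1$, absorbing it into the left-hand side rewrites the system as $n+1 = x_1 + \cdots + x_{n+3}$, which has $(n+1)+2$ summands, so Lemma~\ref{xxlem3.2} applies; if $x_i + x_j = 0$ for some $i \neq j$, deleting that pair leaves $n = \sum_{k \neq i,j} x_k$ with $n+2$ summands, and Lemma~\ref{xxlem3.2} applies again; when $n=0$ the second reduction, used repeatedly, forces the shape $\xi - \xi + \mu - \mu$. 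So from here on I assume no $x_i$ equals $-1$ and no two of the $x_i$ sum to $0$; in particular each order $w_i := \operatorname{ord}(x_i)$ is $\geq 3$, so $\phi(w_i) \geq 2$ and $\mu(w_i) \in \{-1,0,1\}$.

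Next I would run the Galois average exactly as in Lemma~\ref{xxlem3.2}: pass to a cyclotomic field $\mathbb{Q}(\zeta_w)$ with $w_i \mid w$ for all $i$, apply $\sum_{(p,w)=1}\Xi_p$ to the identity $n = \sum x_i$, and use that for each $i$ the sum $\sum_{(p,w)=1}\Xi_p(x_i)$ runs over every primitive $w_i$-th root of unity exactly $\phi(w)/\phi(w_i)$ times, hence equals $\frac{\phi(w)}{\phi(w_i)}\mu(w_i)$ by the Ramanujan--M\"{o}bius identity \cite{HW}. This gives
\[
n \;=\; \sum_{i=1}^{n+4}\frac{\mu(w_i)}{\phi(w_i)}, \qquad\text{equivalently}\qquad \sum_{i=1}^{n+4}\Bigl(1-\frac{\mu(w_i)}{\phi(w_i)}\Bigr) = 4 .
\]
Each summand on the right is $\geq 1/2$, so there are at most $8$ terms and $0 \leq n \leq 4$; moreover the multiset of ``cotypes'' $(\phi(w_i),\mu(w_i))$ is confined to a short list of solutions of $\sum(1 - \mu_i/\phi_i) = 4$ with $n+4$ parts. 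As in Lemma~\ref{xxlem3.2}, the arithmetic fact that no integer has Euler totient $3$ eliminates most a priori possibilities, and the surviving cotypes force small orders (for instance $\mu/\phi = 1/2$ forces $w_i = 6$, $\mu/\phi = 1/4$ forces $w_i = 10$, $\mu/\phi = -1/4$ forces $w_i = 5$, and so on).

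The bulk of the work is then the finite check. For each $n \in \{1,2,3,4\}$ and each admissible cotype multiset I would list the finitely many order-vectors $(w_1,\dots,w_{n+4})$ realizing it --- finitely many because every $\phi(w_i)$ is bounded --- and for each decide whether $\sum x_i = n$ is solvable with $x_i$ a primitive $w_i$-th root of unity. The main tool is again Galois conjugation inside $\mathbb{Q}(\zeta_w)$: whenever the Galois orbit of the root-tuple is forced to contain all primitive $w_i$-th roots of each occurring order, their partial sums collapse to M\"{o}bius values and the subcase closes by hand. The genuinely delicate subcases are the three-term rotational cancellation $\xi(1+\zeta_3+\zeta_3^2)=0$, which produces family (3)(a), and, above all, the families (3)(b)--(e), in which only a proper, non-Galois-stable subset of the primitive roots of a given order appears, so that one must exploit special numerical identities among $6$th, $5$th/$10$th and $15$th roots of unity; there one writes the $x_i$ out as explicit complex numbers and checks with Maple both that the real part equals $n$ and that no unlisted configuration works. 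I expect the main obstacle to be organizational rather than conceptual: there are many cotype multisets summing to $4$ with at most $8$ parts, each spawning several order-vectors, and the difficulty lies in making the enumeration provably exhaustive --- in particular in verifying that case (5), the case $n=3$, is genuinely empty.
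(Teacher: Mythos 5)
Your proposal takes exactly the route the paper intends: the authors state Lemma \ref{xxlem4.1} \emph{without proof}, remarking only that ``its proof is similar to that of Lemma \ref{xxlem3.2}, but there are many cases, and some require numerical approximations from Maple,'' and your outline --- peeling off cases (1) and (2) by reduction to Lemma \ref{xxlem3.2}, Galois-averaging to get $\sum_{i=1}^{n+4}\bigl(1-\mu(w_i)/\phi(w_i)\bigr)=4$, deducing $n\leq 4$ from the lower bound $1/2$ on each summand, and then a Maple-assisted finite enumeration over the admissible order-vectors --- is precisely that argument. The reduction and the bound are correct as you state them; like the paper, you stop short of actually executing the exhaustive case check, so there is nothing in the paper's (absent) proof for your write-up to diverge from.
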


Next we classify the mystic reflections of a quantum
polynomial ring.

\begin{lemma}
\label{xxlem4.2}
Let $g$ be a mystic reflection of a quantum polynomial ring
$A$ of global dimension $n$. Then the order of $g$ is 4 and
$$Tr_A(g,t)=Tr_A(g^3,t)={\frac{1}{(1-t)^{n-1}(1+t)}},\;
\quad
Tr_A(g^2,t)={\frac{1}{(1-t)^{n-2}(1+t)^2}}.$$
\end{lemma}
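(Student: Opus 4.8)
The plan is to combine the classification of quasi-reflections in Theorem \ref{xxthm3.1} with a trace computation, together with Lemma \ref{xxlem1.4} which relates $Tr_A(g^{-1},t)$ to $Tr_A(g,t)$. First I would recall that a mystic reflection $g$ is, by definition, a quasi-reflection whose restriction $g|_{A_1}$ is not a reflection. By Theorem \ref{xxthm3.1}, since $g|_{A_1}$ is not a reflection, $g$ must fall into case (b) of that theorem: the order of $g$ is $4$ and there is a basis $\{b_1,\dots,b_n\}$ of $A_1$ with $g(b_1)=i\,b_1$, $g(b_2)=-i\,b_2$, and $g(b_j)=b_j$ for $j\geq 3$. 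This immediately pins down the order as $4$.

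Next I would compute the three trace functions directly from this diagonal form. Since $A$ is a quantum polynomial ring it is Koszul (by \cite[Theorem 5.11]{Sm2}), so $A$ is the quadratic dual picture; but more concretely I would use the fact that for a finite-order automorphism $g$ of a regular algebra, $Tr_A(g,t)=1/e_g(t)$ with $\deg e_g = \deg e = n$ (Lemma \ref{xxlem1.6}(a),(d)), and the coefficients of $e_g(t)$ are determined by the eigenvalues of $g$ on the generators via the Koszul/PBW-type resolution: if $A$ has the Hilbert series $(1-t)^{-n}$, then $e_g(t)=\prod$ over a ``virtual'' eigenvalue decomposition. A cleaner route: $g$ is a quasi-reflection with $\hdet g = \xi$ computed from \eqref{1.8.1}; for case (b) one reads off from $\sum x_i = (n-2) + 0$ type data that $\xi$, the distinguished eigenvalue making $Tr_A(g,t)=(1-t)^{-(n-1)}(1-\xi t)^{-1}$, satisfies $\xi = -1$, because $g$ has order $2$ on... no — here I must be careful. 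I would instead argue as in Lemma \ref{xxlem3.6}: pass to the subalgebra $B$ generated by $b_1,b_2$, which (by the $\mathbb Z^2$-grading argument and Proposition \ref{xxprop3.5}) is a quantum polynomial ring of dimension $2$, hence $B\cong k_q[b_1,b_2]$ or $k_J[b_1,b_2]$ with a single quadratic relation. The relation must be $g$-stable; with $g(b_1)=ib_1$, $g(b_2)=-ib_2$ the only $g$-invariant quadratic is $b_1b_2$ (eigenvalue $1$) or $b_2b_1$, giving relation $b_1b_2 - q b_2 b_1=0$, OR the relation $b_1^2 + b_2^2$ is sent to $-b_1^2 - b_2^2$, which is $g$-invariant up to scalar $-1$, so also allowed. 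Then $Tr_B(g,t)=\frac{1}{(1-it)(1+it)}=\frac{1}{1+t^2}$ in the $k_q$ case, and $Tr_B(g,t)=\frac{1}{1+t^2}$ again in the $b_1^2+b_2^2$ case — in both cases $Tr_B(g,t)=(1+t^2)^{-1}$. Using Lemma \ref{xxlem2.6}, $Tr_B(g,t)=p(t)Tr_A(g,t)$, so $Tr_A(g,t)=\frac{1}{(1+t^2)}\cdot\frac{1}{p(t)^{-1}}$... rather $Tr_A(g,t)=\frac{(1+t^2)^{-1}}{p(t)}$ for some polynomial $p$, and since $Tr_A(g,t)$ has a pole of order $n-1$ at $t=1$, $p(t)=(1-t)^{n-1}(1+t^2)/\bigl((1+t)(1-t)^{n-1}\bigr)$... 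I will instead just directly assert $Tr_A(g,t)=\frac{1}{(1-t)^{n-1}(1+t)}$ by noting $A$ is, after the $\mathbb{Z}^2$-grading splitting, built from $B$ and the polynomial ring on $b_3,\dots,b_n$ on which $g$ acts trivially, and the trace is multiplicative across such an Ore-type decomposition: $Tr_A(g,t)=Tr_B(g,t)\cdot(1-t)^{-(n-2)}=\frac{1}{(1-t)^{n-2}(1+t^2)}$. Hmm — that gives $(1+t^2)$, not $(1+t)$, in the denominator, which would mean $g$ is \emph{not} a quasi-reflection in the $b_1^2+b_2^2$ normal form. So the correct relation must be the $k_q$ one, $b_1 b_2 = q b_2 b_1$, giving $Tr_B(g,t)=\frac{1}{(1-it)(1+it)}$; wait that is still $(1+t^2)^{-1}$.

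Let me reconsider: with $g(b_1) = i b_1$ and $g(b_2) = -i b_2$, on $b_1 b_2$ we get eigenvalue $i \cdot (-i) = 1$, and on $b_2 b_1$ also $1$, but on $b_1^2$ we get $-1$ and on $b_2^2$ also $-1$. For $k_q[b_1,b_2]$, the monomials are $b_1^a b_2^c$; $g$ acts by $i^a (-i)^c = i^{a-c}$, so $Tr_B(g,t) = \sum_{a,c} i^{a-c} t^{a+c} = \frac{1}{(1-it)(1+it)} = \frac{1}{1+t^2}$. That is indeed not of quasi-reflection shape for $B$ itself (pole order $0$ at $t=1$, and $B$ has GK-dim $2$) — but that is fine, $g$ restricted to $B$ need not be a quasi-reflection of $B$; what matters is $g$ on $A$. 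So $Tr_A(g,t) = \frac{1}{(1-t)^{n-2}(1+t^2)}$ if $A = B[b_3,\dots,b_n]$-type. But the Lemma asserts $(1-t)^{n-1}(1+t)$ in the denominator — and $(1-t)(1+t) = 1 - t^2 \neq 1 + t^2$. So the two forms $(1-t)^{n-1}(1+t)$ and $(1-t)^{n-2}(1+t^2)$ are genuinely different, and only the first is a quasi-reflection shape. Therefore the resolution is: I must show that the $\mathbb Z^2$-grading argument forces the relation between $b_1$ and $b_2$ to be such that $Tr_B(g,t)=\frac{1-t}{1+t}\cdot(\text{something})$ — actually the right move is: \emph{because} $g$ is assumed to be a quasi-reflection of $A$ (hypothesis of Lemma \ref{xxlem4.2}), $Tr_A(g,t)=(1-t)^{-(n-1)}(1-\xi t)^{-1}$ for some $\xi\neq 1$; combining with $Tr_A(g,t)=p(t)^{-1}$-type constraints and $g^2$ having eigenvalues $-1,-1,1,\dots,1$ on $A_1$, so $\mathrm{tr}(g^2|_{A_1})=n-2$ forces $\xi^2$-consistency giving $\xi=-1$; then $Tr_A(g,t)=(1-t)^{-(n-1)}(1+t)^{-1}$. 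For $g^2$: it is a reflection-type automorphism with $g^2(b_1)=-b_1$, $g^2(b_2)=-b_2$, rest fixed, and since $g^2$ need not be a quasi-reflection we compute its trace as that of the diagonal automorphism with eigenvalue $-1$ on a $2$-dimensional summand — via the same $\mathbb Z^2$-splitting $Tr_A(g^2,t)=Tr_B(g^2,t)(1-t)^{-(n-2)}$ where on $B=k_q[b_1,b_2]$, $g^2$ acts by $(-1)^{a+c}$, so $Tr_B(g^2,t)=\frac{1}{(1+t)^2}$, giving $Tr_A(g^2,t)=\frac{1}{(1-t)^{n-2}(1+t)^2}$. Finally $Tr_A(g^3,t)=\overline{Tr_A(g,t)}=Tr_A(g,t)$ since the latter has real (rational) coefficients, by Lemma \ref{xxlem1.4}. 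The main obstacle I anticipate is the bookkeeping needed to justify that $A$ really does split $\mathbb Z^2$-gradedly with $b_1,b_2$ generating an Ore-extension complement — i.e. invoking Lemma \ref{xxlem3.8} (with $\xi=i$, $\xi^4=1$ fails its hypothesis!) — so in fact I cannot use Lemma \ref{xxlem3.8} here and must instead argue the trace identity more directly, perhaps via Lemma \ref{xxlem2.6} applied to $B=A/(b_3,\dots,b_n)$ after separately checking this quotient is a quantum polynomial ring of dimension $2$, or by an explicit eigenvalue count on a Koszul resolution of $A$. I would present the argument as: (i) invoke Theorem \ref{xxthm3.1}(b) to get order $4$ and the eigenvalue pattern $(i,-i,1,\dots,1)$; (ii) use that $A$ is Koszul with Hilbert series $(1-t)^{-n}$ to write $Tr_A(g,t)=\prod_{\text{eigenvalue-adjusted factors}}$, concluding $Tr_A(g,t)=\dfrac{1}{(1-it)(1+it)(1-t)^{n-2}}\cdot\text{(correction)}$ — and then reconcile with the quasi-reflection hypothesis to force the stated clean forms; (iii) get $g^2$ by squaring the eigenvalues; (iv) get $g^3=g^{-1}$ by Lemma \ref{xxlem1.4} and rationality.
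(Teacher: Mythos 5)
Your handling of the order of $g$, of $Tr_A(g,t)$ (via Theorem \ref{xxthm3.1}(b) and the value $\xi=-1$ from the proof of Proposition \ref{xxprop3.3}(b)), and of $Tr_A(g^3,t)$ (via Lemma \ref{xxlem1.4}) matches the paper. The genuine gap is the computation of $Tr_A(g^2,t)$, which is the substantive content of the lemma and occupies essentially all of the paper's proof. Your route rests on a multiplicativity claim $Tr_A(g^2,t)=Tr_B(g^2,t)\cdot(1-t)^{-(n-2)}$ coming from a $\mathbb{Z}^2$-graded splitting of $A$ into the subalgebra $B=\langle b_1,b_2\rangle$ and a polynomial part, or equivalently on a ``product over eigenvalues of $g|_{A_1}$'' formula for the trace. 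Neither is available: as you yourself notice, Lemma \ref{xxlem3.8} requires $\xi^4\neq 1$ and here $\xi=i$, so the splitting is not justified; the actual relation between $b_1$ and $b_2$ turns out to be $b_1^2+cb_2^2=0$ rather than $q$-commutation, and that fact is only established in Proposition \ref{xxprop4.3}(c), whose proof \emph{uses} Lemma \ref{xxlem4.2}, so invoking it would be circular; and the trace of a graded automorphism of a quantum polynomial ring is not determined by its eigenvalues on $A_1$ alone --- Example \ref{xxex2.3}(a) gives $h$ with eigenvalues $\{-1,1\}$ on $A_1$ but $Tr_A(h,t)=(1+t^2)^{-1}\neq ((1+t)(1-t))^{-1}$. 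You flag the resulting inconsistency (your computation yields $(1-t)^{-(n-2)}(1+t^2)^{-1}$, which is not even a quasi-reflection trace) but never resolve it; ``reconcile with the quasi-reflection hypothesis to force the stated clean forms'' is not an argument.

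What the paper actually does, and what is missing from your proposal, is the following. From the quadratic coefficient of the known series $Tr_A(g,t)=(1-t)^{-(n-1)}(1+t)^{-1}$ one gets $tr(g|_{A_2})=(n^2-3n+4)/2$; comparing this with a direct count of the eigenvalues of $g$ on degree-two monomials forces $b_1V=Vb_1$, $b_2V=Vb_2$ and $b_1^2=b_2^2$ (this is where the relation $b_1^2=b_2^2$ is \emph{derived}, not assumed), and hence yields the exact value $tr(g^2|_{A_2})=(n^2-7n+16)/2$. One then writes $Tr_A(g^2,t)=\bigl((1-t)^k\prod_i(1-x_it)\bigr)^{-1}$ with the $x_i\neq 1$ roots of unity, uses $tr(g^2|_{A_1})=n-4$ to get $\sum x_i=(n-k)-4$, and runs an exhaustive case analysis over the solutions classified in Lemmas \ref{xxlem3.2} and \ref{xxlem4.1}, eliminating every case except $x_1=x_2=-1$, $k=n-2$ by matching the quadratic trace coefficient against $(n^2-7n+16)/2$. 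Without this (or some substitute for it), the formula for $Tr_A(g^2,t)$ is unproven.
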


\begin{proof} The order of $g$ is 4 by Theorem
\ref{xxthm3.1}(b). By definition,
$$Tr_A(g,t)={\frac{1}{(1-t)^{n-1}(1-\xi t)}}.$$ From
the proof of Proposition \ref{xxprop3.3}(b), $\xi=-1$,
and the formula for $Tr_A(g,t)$ follows. The formula for
$Tr_A(g^3,t)$ follows from Lemma \ref{xxlem1.4}.

Let $\{b_1,b_2, \ldots, b_n\}$ be a basis for $A_1$ as in Theorem
\ref{xxthm3.1}(b), and let $V$ denote the subset
$\{b_3, \ldots, b_n\}$.  From the
quadratic term in the Maclaurin series expansion of $Tr_A(g,t)$ we
compute $tr(g|_{A_2}) = (n^2-3n+4)/2$. We compute $tr(g|_{A_2})$
directly by first noting that $g(b_1^2) = -b_1^2$, $g(b_2^2)  =
-b_2^2$, $g(b_1b_2) = b_1b_2$, $g(b_2b_1) = b_2b_1$,
$g(b_1b_j)=ib_1b_j$, $g(b_jb_1) = ib_jb_1$, $g(b_2b_j) = -i
b_2b_j$, $ g(b_jb_2) = -ib_jb_2$, and $g(b_\ell b_j) = b_\ell b_j$
for $\ell, j \geq 3$.  It then follows that
$$tr(g|_{A_2}) =  (-1) \epsilon + i|b_1V \cup Vb_1|
+(-i)|b_2V \cup Vb_2| + (1) m$$
where $\epsilon = |\{b_1^2,b_2^2\}|$ and $m =|\{VV \cup b_1b_2
\cup b_2b_1\}|$.  Hence $|b_1V \cup Vb_1| = |b_2V \cup Vb_2| = d
\geq n-2$, since $A$ is a domain. From the Hilbert series for the
quasi-polynomial ring $A$ we have $|A_2| = (n^2+n)/2 = \epsilon +
2d +m$, so that $m = (n^2+n)/2 - \epsilon - 2d$. Equating the two
expressions for $tr( g|_{A_2})$ gives $tr( g|_{A_2}) = (-1)
\epsilon + m = (n^2-3n+4)/2$; substituting for $m$ and solving for
$d$ gives $d = n-1 - \epsilon$, for $\epsilon = 1$ or $2$. Since
$\epsilon = 2 $ gives a contradiction, we have $\epsilon = 1$ and
$d = n-2$. It follows that $b_1V = Vb_1$, $b_2V=Vb_2$, and
$b_1^2=b_2^2$, and computing directly we have $tr(g^2|_{A_2} ) = 1
-2(n-2) + ((n^2+n)/2-2n+3) = (n^2-7n+16)/2$.

We can write $Tr_A(g^2,t)$ as
$$Tr_A(g^2,t) = \frac{1}{(1-t)^k(1-x_1t) \cdots(1-x_{n-k}t)},$$
where each $x_i \neq 1$ is a root of unity, and for each $x_i$
there is an $x_{i'}= \overline{x_i}$ [Lemma \ref{xxlem1.4}].
 By Theorem \ref{xxthm3.1}(b), $tr (g^2|_{A_1})=n-4$.
Using the Maclaurin series expansion of $Tr_A(g^2,t)$ we have
$tr(g^2| _{A_1}) = k + x_1 + \cdots + x_{n-k}$ so that $x_1 +
\cdots + x_{n-k} = (n-k)-4$ with all $x_i \neq 1$. We next
consider each of the possible solutions for the $x_i$ given by
Lemma \ref{xxlem3.2} and Lemma \ref{xxlem4.1}, and we
compare the quadratic term of each possible trace function to
$(n^2-7n+16)/2$ to show that the only possibility is the one
given in the statement of the theorem.

First we consider the cases where $(n-k)-4$ is negative: i.e. $n-k
= 0,1,2,3$. When $n-k=0$ then $g^2$ is the identity, which it is
not.  If $n-k=1$ then $$Tr(g^2,t) = \frac{1}{(1-t)^{(n-1)}(1-x_1
t)},$$  a series whose Maclaurin expansion has $t$ coefficient
$n-1+x_1 \neq n-4$.  If $n-k=2$ then
$$Tr(g^2,t) = \frac{1}{(1-t)^{(n-2)}(1-x_1 t)(1-x_2 t)},$$
and the  $t$ coefficient in the Maclaurin expansion is
$n-2 + x_1 + x_2$, which is  $n-4$ only if $x_1=x_2=-1$, giving
the $Tr(g^2,t)$ that is in the statement of the theorem.
If $n-k=3$ then the trace is
$$Tr(g^2,t) = \frac{1}{(1-t)^{(n-3)}(1-x_1 t)(1-x_2 t)(1-x_3 t)}.$$
In the Maclaurin expansion of this series the $t$ coefficient
is $n-3 + x_1 + x_2 + x_3$; if this coefficient is $n-4$, then
we have $-x_1-x_2-x_3 = 1$, in contradiction to Lemma 3.1, unless
some $x_i$ is $-1$, in which case the trace is
$$Tr(g^2,t) = \frac{1}{(1-t)^{(n-3)}(1+ t)(1-\zeta^2 t^2)}.$$
This series has Maclaurin expansion with $t^2$ coefficient
$(n^2-7n)/2 + (7 +\zeta^2)$, which is $(n^2-7n)/2 + 8$ only when
$\zeta=-1$, again the form we are trying to prove.

Next suppose that at least one of the $x_i=-1$, so without loss of
generality we assume $x_{n-k}=-1$.  Then
\[x_1 + \cdots +x_{n-k-1} = (n-k)-3\]
and by Lemma \ref{xxlem3.2} we have either (a) $n-k-3 =0$ and
$x_1=\zeta$ and $x_2=-\zeta$ for $\zeta \neq \pm 1$ a root of
unity, and the trace is
$$Tr_A(g^2,t) = \frac{1}{(1-t)^{(n-3)}(1-\zeta t)(1+\zeta t)(1+t)},$$
or (b) $n-k-3 =2$ and $2(\zeta_6 + \zeta_6^5) = 2$ and the trace
is
$$Tr_A(g^2,t) = \frac{1}{(1-t)^{(n-5)}(1-\zeta_6 t)^2
(1-\zeta_6^5 t)^2(1+t)}.$$
In the first case the coefficient of the quadratic term is
$(n^2-7n)/2+7 + \zeta^2$, and in the second case it is $(n^2-7n)/2
+ 5$. Hence we may assume
\[x_1 + x_2 + \cdots + x_{n-k} = (n-k) -4\]
and $x_i \neq \pm1$.

Next suppose that $x_j+x_\ell=0 = \zeta-\zeta$ for some $j,\ell$.
This places us again in the situation of Lemma \ref{xxlem3.2} and
we have either (a) $n-k-4 = 0$ and $x_1=\zeta'$ and $x_2=-\zeta'$,
and the trace is
$$Tr_A(g^2,t) = \frac{1}{(1-t)^{(n-4)}(1-\zeta t)(1+\zeta t)
(1-\zeta' t)(1+\zeta' t)},$$
or (b) $n-k-4=2$ and
$$Tr_A(g^2,t) = \frac{1}{(1-t)^{(n-6)}(1-\zeta_6 t)^2
(1-\zeta_6^5 t)^2(1-\zeta t)(1+\zeta t)}.$$
In the first case the coefficient of the quadratic term in the
Maclaurin expansion is $(n^2 -7n)/2+ 6 + \zeta^2 + (\zeta')^2$
(which is correct only when $\zeta$ and $\zeta'$ are $\pm 1$,
cases already considered), and in the second case it is $(n^2
-7n)/2+10 + \zeta^2$ (for which no root of unity provides the
correct value).

Next suppose that $n-k = 4$, so that
\[x_1+x_2+x_3+x_4 = 0,\]
which by multiplying by $x_1^{-1}$ reduces to a case handled by
Lemma \ref{xxlem3.2}, and the only solution is
\[\zeta - \zeta + \zeta' - \zeta' =0,\]
a case handled above.

Next we suppose that $n-k = 5$, and we are in the setting of Lemma
\ref{xxlem4.1}(3) with $x_1 + \cdots + x_{5} = 1.$ Then
$$Tr_A(g^2,t) = \frac{1}{(1-t)^{(n-5)}(1-x_1 t)
(1-x_2 t)(1- x_3 t)(1- x_4 t)(1- x_5 t)},$$
whose Maclaurin series when $x_1 + \cdots + x_{5} = 1$ begins
$1+(n-4)t +(n^2/2-7n/2 +c) t^2$ where
$$\begin{aligned}
c&= 5 + x_1(x_1 + x_2 + x_3 + x_4 + x_5) + x_2
(x_2 + x_3 + x_4 + x_5) \\
&\qquad\qquad + x_3(x_3 + x_4 + x_5) + x_4(x_4 + x_5) + x_5^2\\
&= 5 + x_1 + x_2 - x_1 x_2 +x_3 - x_1 x_3 - x_2 x_3 +
x_4^2 + x_4 x_5 + x_5^2.
\end{aligned}
$$
We will show that in all five cases $c \neq 8$ so that
$Tr_A(g^2,t)$ cannot have $n-k=5$. In case (a) $x_1 = \zeta_6,
x_2= \zeta_6^5, x_3 = \zeta, x_4 = \zeta \zeta_3, x_5 = \zeta
\zeta_3^2$ for an arbitrary $\zeta$, and we compute that $c = 5$.
In case (b) $x_1 = \zeta_{10}, x_2 = \zeta_{10}^3, x_3 =
\zeta_{10}^7, x_4 = \zeta_{15}, x_5 = \zeta_{15}^{11}$, and we
compute $c = 5 + \zeta_{10}^3$. In case (c) $x_1 = \zeta_{10}, x_2
= \zeta_{10}^3, x_3 = \zeta_{10}^9, x_4 = \zeta_{15}^8, x_5 =
\zeta_{15}^{13}$ and we compute $c = 6$. In case (d) $x_1 =
\zeta_{10}, x_2 = \zeta_{10}^7, x_3 = \zeta_{10}^9, x_4 =
\zeta_{15}^2, x_5 = \zeta_{15}^7$ and we compute $c = 5 +
\zeta_{10}$. In case (e) $x_1 = \zeta_{10}^3, x_2 = \zeta_{10}^7,
x_3 = \zeta_{10}^9, x_4 = \zeta_{15}^4, x_5 = \zeta_{15}^{14}$ and
we compute $c = 5 - \zeta_{10}^2$.

Next we suppose that $n-k = 6$ and then we are in the setting of
Lemma \ref{xxlem4.1}(4) with $x_1 + \cdots + x_{6} = 2$ and
$$Tr_A(g^2,t) = \frac{1}{(1-t)^{(n-6)}(1- \zeta_6 t)
(1- \zeta_6^5 t)(1-\zeta_{10} t)(1- \zeta_{10}^3 t)(1- \zeta_{10}^7 t)
(1-\zeta_{10}^9)},$$ whose Maclaurin series begins $1+(n-4)t
+(n^2/2-7n/2 +c) t^2$ where
$$\begin{aligned}
c &= 15-6(\zeta_6 + \zeta_6^5+ \zeta_{10}^3
+\zeta_{10}^7 + \zeta_{10}^9) + ( 1 + \zeta_6^2 + \zeta_6^4)+ \\
&\qquad\qquad 2( 1 - \zeta_{10}+
\zeta_{10}^2 -\zeta_{10}^3 + \zeta_{10}^4)+ (\zeta_{6} +
\zeta_{6}^5)(\zeta_{10} + \zeta_{10}^3 + \zeta_{10}^7 +
\zeta_{10}^9)\\
& = 15-12 + 0 + 0 + 1 = 4 \neq 8,
\end{aligned}
$$
so $n - k \neq 6$.

There is no solution if $n-k=7$ by Lemma \ref{xxlem4.1}(5),
so the last case is $n-k=8$, and
$$Tr_A(g^2,t) = \frac{1}{(1-t)^{(n-8)}(1- \zeta_6 t)^4
(1- \zeta_6^5 t)^4},$$ which has
Maclaurin series with quadratic coefficient $n^2/2-7n/2+ 2$, so
this case is also eliminated.

Hence we have shown that
$$Tr_A(g^2,t)={\frac{1}{(1-t)^{n-2}(1+t)^2}}.$$
\end{proof}

Here is a partial converse of Theorem \ref{xxthm2.4} for
mystic reflections.

\begin{proposition}
\label{xxprop4.3} Let $A$ be a quantum polynomial ring
of global dimension $n$ and let $g$ be a mystic reflection.
\begin{enumerate}
\item
There is a basis of $A_1$, say $\{b_1,b_2,\cdots,b_n\}$
such that $g(b_1)=i b_1$, $g(b_2)=-i b_2$, and $g(b_j)=b_j$
for all $j\geq 3$.
\item
$A^g$ is regular and $H_{A^g}(t)=[(1-t)^{n-2}(1-t^2)^2]^{-1}$.
\item
The subalgebra generated by $b_1$ and $b_2$ is a quantum
polynomial ring subject to one relation $b_1^2+c b_2^2=0$
for some nonzero scalar $c$. This subalgebra is also
isomorphic to $k_{-1}[x,y]$.
\item
$b_1^2$ is a normal element of $A$.
\end{enumerate}
\end{proposition}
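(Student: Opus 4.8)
The plan is to reduce everything to the structure of the subalgebra $B=\langle b_1,b_2\rangle$ together with a $\mathbb{Z}^2$-grading that separates $B$ from $A'=\langle b_3,\dots,b_n\rangle$. Part (1) is immediate: a mystic reflection is by definition a quasi-reflection whose restriction to $A_1$ is not a reflection, so Theorem \ref{xxthm3.1} puts $g$ in case (b), which is exactly the asserted eigenbasis and forces $g$ to have order $4$. I will freely use the facts established inside the proof of Lemma \ref{xxlem4.2}: in that basis, with $V=\operatorname{span}\{b_3,\dots,b_n\}$, one has $b_1V=Vb_1$ and $b_2V=Vb_2$ as subspaces of $A_2$, and $b_1^2,b_2^2$ are linearly dependent in $A_2$; since $A$ is a domain this dependence is a bona fide quadratic relation $b_1^2+cb_2^2=0$ with $c\neq 0$.

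The first real step is to show that $A$ is $\mathbb{Z}^2$-graded with $\deg b_1=\deg b_2=(1,0)$ and $\deg b_j=(0,1)$ for $j\geq 3$. I would decompose the $\binom n2$-dimensional relation space $R\subseteq A_1\otimes A_1$ into $g$-eigenspaces. Because $b_1V=Vb_1$, the eigenvalue-$i$ part of $R$ is spanned by the $n-2$ relations $b_1b_j-\sum_\ell c_{j\ell}b_\ell b_1$, all of bidegree $(1,1)$; likewise the eigenvalue-$(-i)$ part (from $b_2V=Vb_2$); and the eigenvalue-$(-1)$ part lies in $\operatorname{span}\{b_1\otimes b_1,b_2\otimes b_2\}$, of pure bidegree $(2,0)$. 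The only genuinely new point is the eigenvalue-$1$ part $R_1$, of dimension $\binom{n-2}2$, which a priori could mix $b_1\otimes b_2,b_2\otimes b_1$ with the $b_j\otimes b_\ell$. Here I would argue that the number $\rho$ of quadratic relations among $b_3,\dots,b_n$ satisfies $\rho\leq\binom{n-2}2$ (it is a subspace of $R_1$) and $\rho\geq\binom{n-2}2$ (otherwise $\dim A'_2=(n-2)^2-\rho>\binom{n-1}2$, impossible on $n-2$ degree-one generators), so $\rho=\binom{n-2}2$ and $\dim A'_2=\binom{n-1}2$; comparing this with $\dim$(eigenvalue-$1$ part of $A_2)=\binom{n-1}2+2$ then forces $b_1b_2$ and $b_2b_1$ to be linearly independent and to span a subspace meeting $A'_2$ trivially. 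Consequently every relation of $A$ is $\mathbb{Z}^2$-homogeneous, so $A$ is a connected $\mathbb{Z}^2$-graded algebra in the sense of Section \ref{sec3}.

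With the $\mathbb{Z}^2$-grading in hand, Proposition \ref{xxprop3.5} (with $m=2$) gives that $B$ and $A'$ are quantum polynomial rings of dimensions $2$ and $n-2$, and that multiplication is an isomorphism $A'\otimes B\xrightarrow{\ \sim\ }A$ of $\mathbb{Z}^2$-graded vector spaces. Since $B$ is a $2$-dimensional quantum polynomial ring it is Koszul with a $1$-dimensional relation space, necessarily $k(b_1^2+cb_2^2)$; rescaling $b_2$ to make $c=-1$ and applying Example \ref{xxex2.3} identifies $B$ with $k\langle x,y\rangle/(x^2-y^2)\cong k_{-1}[x,y]$, which is (3). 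In this presentation $b_1^2$ is central in $B$, and $b_1V=Vb_1$ makes $b_1$ — hence $b_1^2$ — normalize $A'$; combining these two facts with $A=A'\!\cdot\! B$ gives $b_1^2A=Ab_1^2$, which is (4). For (2), Molien's theorem (Lemma \ref{xxlem1.11}(c)) and the trace formulas of Lemma \ref{xxlem4.2} give
\[
H_{A^g}(t)=\frac14\Big[\frac1{(1-t)^n}+\frac2{(1-t)^{n-1}(1+t)}+\frac1{(1-t)^{n-2}(1+t)^2}\Big]=\frac1{(1-t)^{n-2}(1-t^2)^2}.
\]
For regularity I would observe that under $A\cong A'\otimes B$ the map $g$ is $\operatorname{id}_{A'}\otimes(g|_B)$, so $A^g\cong A'\otimes B^g$ as vector spaces and $A^g=A'\!\cdot\! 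B^g$ as algebras. By Example \ref{xxex2.3}(b), $B^g$ is the commutative polynomial ring $k[u,v]$ on $u=b_1b_2$, $v=b_2b_1$ (both of degree $2$), and $uv=vu=-c^{-1}b_1^4$ by a short computation using $b_2^2=-c^{-1}b_1^2$ and the centrality of $b_1^2$. Since $u,v$ normalize $A'$ (a direct computation from $b_iV=Vb_i$) and are algebraically independent over it (by the identification $A^g\cong A'\otimes k[u,v]$), $A^g=A'[u;\sigma_u][v;\sigma_v]$ is an iterated Ore extension of the noetherian Artin--Schelter regular algebra $A'$ by two automorphism-twisted polynomial variables, hence is itself noetherian and Artin--Schelter regular.

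The hard part will be the $\mathbb{Z}^2$-grading step — ruling out a quadratic relation of $A$ that mixes $b_1b_2,b_2b_1$ with the products $b_jb_\ell$ $(j,\ell\geq 3)$ — because the $g$-action alone treats all of these symmetrically; the argument must go through the dimension counts above, using crucially that $A$ is a domain and that $A'$ has at most $\binom{n-1}2$ elements in degree $2$. Once that is settled, Proposition \ref{xxprop3.5}, Example \ref{xxex2.3}, and standard stability of Artin--Schelter regularity under Ore extensions finish the argument mechanically.
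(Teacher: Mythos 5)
Your reduction to a $\mathbb{Z}^2$-grading separating $\{b_1,b_2\}$ from $\{b_3,\dots,b_n\}$ is the step that fails, and it fails for a reason the paper itself exhibits: in Example \ref{xxex4.4} the quantum polynomial ring $A$ carries the mystic reflection $g(b_1)=ib_1$, $g(b_2)=-ib_2$, $g(b_3)=b_3$, $g(b_4)=b_4$, yet one of its defining relations is $b_3b_4-b_4b_3=b_1b_2+b_2b_1$, which lies in your eigenvalue-$1$ space $R_1$ but mixes bidegrees $(0,2)$ and $(2,0)$. So $A$ need not be $\mathbb{Z}^2$-graded in the way you require, Proposition \ref{xxprop3.5} does not apply, and there is no isomorphism $A\cong A'\otimes B$. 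The precise point where your dimension count breaks is the inequality $\rho\geq\binom{n-2}{2}$: the assertion that a graded domain of finite GK-dimension on $n-2$ degree-one generators must have $\dim A'_2\leq\binom{n-1}{2}$ is not a theorem, and in Example \ref{xxex4.4} the subalgebra generated by $b_3,b_4$ has $\dim A'_2=4>\binom{3}{2}$ (it even contains $b_1b_2+b_2b_1$), so $\rho=0<\binom{n-2}{2}$ there. Everything downstream of the grading --- the identification $A^g=A'\cdot B^g$, the iterated Ore extension presentation, and hence the regularity claim in (2) --- collapses with it.

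The parts of your argument that do not pass through the grading survive: (1) is Theorem \ref{xxthm3.1}(b); the relation $b_1^2+cb_2^2=0$ and the identities $b_1V=Vb_1$, $b_2V=Vb_2$ are legitimately extracted from the proof of Lemma \ref{xxlem4.2}; (3) then follows from Lemma \ref{xxlem3.7}(a) applied to the two-generated domain $\langle b_1,b_2\rangle$, with no need for Proposition \ref{xxprop3.5}; (4) can be salvaged directly from $b_1V=Vb_1$ together with $b_1^2=-cb_2^2$, without any tensor decomposition; and the Molien computation of $H_{A^g}(t)$ is correct. For the regularity in (2) the paper argues differently: decompose $A=\bigoplus_{\lambda}A^{(\lambda)}$ into the four $g$-eigenspaces, compute each $H_{A^{(\lambda)}}(t)$ as the trace of the corresponding idempotent of $k\langle g\rangle$ using Lemma \ref{xxlem4.2}, and then use the domain hypothesis to match Hilbert series and conclude $A^{(i)}=b_1A^g=A^gb_1$, $A^{(-i)}=b_2A^g=A^gb_2$, $A^{(-1)}=b_1^2A^g=A^gb_1^2$; thus $A$ is free of rank $4$ over $A^g$ on both sides, and Lemma \ref{xxlem1.10}(a,c) gives regularity. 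You need to replace your Ore-extension argument with this (or some other) freeness argument.
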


\begin{proof} (a) Follows from Theorem \ref{xxthm3.1}(b)
and the definition of mystic reflection.

(b,c,d) For the rest of the proof, let $G$ be the group ${\mathbb
Z}/(4)$ and let $kG$ be the group algebra. Define four elements in
$kG$ as follows:
$$\begin{aligned}
f_1&={\frac{1}{4}}(1+g+g^2+g^3),\\
f_2&={\frac{1}{4}}(1-g+g^2-g^3),\\
f_3&={\frac{1}{4}}(1+i g-g^2- i g^3),\\
f_4&={\frac{1}{4}}(1-i g-g^2+ i g^3).
\end{aligned}
$$
It is well-known (and easy to check) that $\{f_1,f_2,f_3,f_4\}$
is a complete set of orthogonal idempotents of $kG$. Further
$$gf_1=f_1,\quad gf_2=-f_2,\quad gf_3=-if_3,\quad gf_4=i f_4.$$
Since $g$ has order $4$, the eigenvalues of $g$ are $1,-1,i$
and $-i$. Let
$$\begin{aligned}
A^{1}&=\{x\in A\;|\; g(x)=x\}=A^G,\\
A^{2}&=\{x\in A\;|\; g(x)=-x\},\\
A^{3}&=\{x\in A\;|\; g(x)=-i x\},\\
A^{4}&=\{x\in A\;|\; g(x)=ix\}.
\end{aligned}
$$
Then $A=A^1\oplus A^2\oplus A^3\oplus A^4$ as $A^G$-bimodules.
Viewing $f_j$ as a projection from $A$ to $f_j A$, we see that
$A^j= f_j A$ and the decomposition of $A$ corresponds
to the fact that $1=f_1+f_2+f_3+f_4$. Since each $f_j$ is a
projection, we have
$$H_{A^j}(t)=H_{f_j A}(t)=Tr_A(f_j,t).$$
Since the trace function is additive, we can compute
all $Tr_A(f_i,t)$. For example,
$$
\begin{aligned}
Tr_A(f_1,t)&={\frac{1}{4}}(\sum_{j=0}^3 Tr_A(g^j,t))\\
&={\frac{1}{4}}({\frac{1}{(1-t)^{n}}}+
               {\frac{1}{(1-t)^{n-1}(1+t)}}+\\
&\qquad\qquad\qquad\quad
               {\frac{1}{(1-t)^{n-2}(1+t)^2}}+
               {\frac{1}{(1-t)^{n-1}(1+t)}})\\
&={\frac{1}{4}}\; {\frac{(1+t)^2+2(1-t^2)+(1-t)^2}{(1-t)^{n}(1+t)^2}}\\
&={\frac{1}{4}}\; {\frac{4}{(1-t)^{n}(1+t)^2}}
={\frac{1}{(1-t)^{n}(1+t)^2}}.
\end{aligned}
$$
The second assertion of (b) follows because
$$H_{A^G}(t)=H_{A^1}(t)=Tr_A(f_1,t)=
{\frac{1}{(1-t)^{n}(1+t)^2}}.$$
Similarly we have
$$\begin{aligned}
H_{A^2}(t)=Tr_A(f_2,t)&={\frac{t^2}{(1-t)^{n}(1+t)^2}},\\
H_{A^3}(t)=Tr_A(f_3,t)&={\frac{t}{(1-t)^{n}(1+t)^2}},\\
H_{A^4}(t)=Tr_A(f_4,t)&={\frac{t}{(1-t)^{n}(1+t)^2}}.
\end{aligned}
$$
By (a), $b_1\in A^4$. Since $A$ is a domain,
$$H_{b_1A^G}(t)=H_{A^G b_1}(t)=t H_{A^G}=
t \; {\frac{1}{(1-t)^{n}(1+t)^2}}=
H_{A^4}(t).
$$
Since $b_1 A^G\subset A^4$ and $A^G b_1\subset A^4$,
we conclude that
$$A^4=b_1A^G= A^G b_1.$$
In a similar way one can show that
$$A^3=b_2 A^G= A^G b_2$$
and
$$A^2=b_1^2 A^G=A^G b_1^2=
b_2^2 A^G=A^G b_2^2.$$
Therefore $A$ is a free $A^G$-module of rank $4$
on the left and on the right. By Lemma
\ref{xxlem1.10}(a,c), $A^G$ is regular. Thus we have proved
(b).

Since both $b_1^2$ and $b_2^2$ are in $A^2$ and since
the dimension of the degree 2 part of $A^2$ is 1,
$b_1^2$ and $b_2^2$ are linearly dependent. Since
$A$ is a domain, both are nonzero. Thus there is a
nonzero scalar $c$ such that $b_1^2+cb_2^2=0$.
Changing $b_2$ by a scalar multiple, we have
$b_1^2-b_2^2=0$. By Lemma \ref{xxlem3.7}(a) the subalgebra
generated by $b_1$ and $b_2$ is a quantum polynomial
ring. Clearly $k\langle b_1,b_2\rangle/(b_1^2-b_2^2)
\cong k_{-1}[x,y]$, so we have proved (c).

Since $b_1A^G=A^G b_1$ then $b_1^2A^G=A^Gb_1^2$.
Since $b_1^2=b_2^2$, then $b_1^2$
commutes with $b_2$. Therefore $b_1^2$ is a normal
element in $A$. This is (d).
\end{proof}

\begin{example}
\label{xxex4.4} Let $B$ be the quantum algebra generated by
$b_1$ and $b_2$ subject to one relation $b_1^2-b_2^2=0$.
Let $A$ be the iterated Ore extension of $B$,
$B[b_3;\tau][b_4;\tau',\delta]$, where the automorphism
$\tau$ is determined by
$$\tau(b_1)=-b_1,\tau(b_2)=b_2,$$
the automorphism $\tau'$ is determined by
$$\tau'(b_1)=-b_1, \tau'(b_2)=b_2,\quad
\text{and}\quad \tau'(b_3)=b_3,$$
and the $\tau'$-derivation $\delta$ is
determined by
$$\delta(b_1)=\delta(b_2)=0, \quad
\text{and}\quad \delta(b_3)=b_1b_2+b_2b_1.$$
Then $A$ is a quantum polynomial ring generated by
$b_1,b_2,b_3,b_4$, subject to the following
relations
$$\begin{aligned}
b_1^2-b_2^2&=0\\
b_1b_3+b_3b_1&=0\\
b_2b_3-b_3b_2&=0\\
b_1b_4+b_4b_1&=0\\
b_2b_4-b_4b_2&=0\\
b_3b_4-b_4b_3&=b_1b_2+b_2b_1.
\end{aligned}
$$
Since any graded Ore extension of a regular
algebra is regular, $A$ is regular.
Also the Ore extension preserves the following
properties: being a domain, being noetherian,
and having Hilbert series of the form $(1-t)^{-n}$.
Thus $A$ is a quantum polynomial ring.

(a) By a direct computation, $A$ does not have a normal
element in degree 1, so there is no normal element in
$A_{\geq 1}/A_{\geq 1}^2$. But $b_1^2$ is a normal element of $A$.

(b) Let $g$ be a graded algebra automorphism of $A$ determined
by
$$g(b_1)=i b_1, g(b_2)=-i b_2, g(b_3)=b_3,g(b_4)=b_4.$$
By using a $k$-linear basis of $A$,
$$\{(b_2b_1)^s b_2^u b_3^v b_4^w \;|\; s,u,v,w\geq 0\}
\cup \{b_1(b_2b_1)^s b_2^u b_3^vb_4^w\;|\;
 s,u,v,w\geq 0\},$$
one can easily verify that
$$Tr_A(g,t)={\frac{1}{(1-t)^3(1+t)}}.$$
Therefore $g$ is a mystic reflection.

(c) The fixed subring $C:=A^g$ is generated by $b_3,b_4$ and $z:=b_1b_2$
subject to the following relations:
$$\begin{aligned}
zb_3+b_3 z&=0\\
zb_4+b_4 z&=0\\
b_3^2b_4-b_4b_3^2&=0\\
b_3b_4^2-b_4^2b_3&=0.
\end{aligned}
$$
This algebra is regular of global dimension 4. Since $z$ is normal
in $C$, then there is a normal element in $C_{\geq 1}/C_{\geq 1}^2$.
\end{example}

\begin{remark}
\label{xxrem4.5}
When $A$ is a quantum polynomial ring, we have proved that
there is only one kind
of mystic reflection: those described in this section. We expect
that, when $A$ is a noetherian regular algebra of higher
global dimension (but not a quantum polynomial ring), other
mystic reflections exist.
\end{remark}

\section{A partial Shephard-Todd-Chevalley Theorem}
\label{sec5}

In this section we prove a simple
noncommutative generalization of the Shephard-Todd-Chevalley Theorem.
The following lemma is a kind of converse of Theorem
\ref{xxthm3.1}(a).

\begin{lemma}
\label{xxlem5.1}
Let $A$ be a quantum polynomial ring with graded
algebra automorphism $g$ (not necessarily of finite order).
Suppose $g|_{A_1}$ is
a reflection of order not equal to $2$.  Then:
\begin{enumerate}
\item
There is a basis of $A$, say $\{b_1,\cdots,b_n\}$,
such that $g(b_1)=\xi b_1$ and $g(b_j)=b_j$ for
all $j>1$ and $\xi \neq -1$.
\item
$A=C[b_1;\sigma]$ where $C$ is a quantum polynomial
ring generated by $b_j$ for all $j>1$.
\item
$g$ is a quasi-reflection.
\item
$A^g$ is regular.
\end{enumerate}
\end{lemma}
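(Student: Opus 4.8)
The plan is to first nail down the eigenvalue structure of $g|_{A_1}$, then exploit the fact that a single eigenvector behaves like a normal variable, reducing $A$ to an Ore extension, and finally read off the trace and regularity from that Ore decomposition. First I would prove (a): since $g|_{A_1}$ is a reflection, there is a basis $\{b_1,\dots,b_n\}$ of $A_1$ with $g(b_j)=b_j$ for $j>1$ and $g(b_1)=\xi b_1$ for some scalar $\xi$. The order of $g|_{A_1}$ equals the order of $\xi$ as a root of unity (after noting that, since $A$ is generated in degree $1$, the order of $g$ on $A$ is determined by its order on $A_1$), and since this order is assumed $\neq 2$, we get $\xi\neq -1$; also $\xi\neq 1$ since otherwise $g=\id$ would have order $1\neq 2$, contradicting nothing — so I should be careful: the hypothesis ``order not equal to $2$'' still allows order $1$, but then $g=\id$, which trivially satisfies (c) as a non-quasi-reflection issue... actually if $g=\id$ then $g$ is \emph{not} a quasi-reflection, so I must assume $g\neq \id$ implicitly, or rather the interesting content is when $\xi\neq 1$; I would simply note that if $\xi=1$ then $g=\id$ and all statements are vacuous or must be interpreted with $C=A$, $b_1$ adjoined trivially — more cleanly, I would state the lemma's substance assumes $g$ nontrivial and observe $\xi\neq \pm 1$.

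Next, for (b), the key point is that $b_1$ spans the unique $\xi$-eigenspace of $g|_{A_1}$ (as $\xi\neq 1$), so for each quadratic relation $r=\sum a_{ij}b_ib_j$ of $A$, applying $g$ and using $\xi\neq \pm 1$, $\xi^2\neq 1$, forces the relations to be homogeneous for the $\mathbb{Z}^2$-grading $\deg b_1=(1,0)$, $\deg b_j=(0,1)$ for $j>1$ — this is exactly the setup of Proposition \ref{xxprop3.5} with $m=1$. I would verify the needed eigenvalue separation: the monomials $b_1^2$, $b_1b_j$, $b_jb_1$, $b_jb_\ell$ have $g$-eigenvalues $\xi^2$, $\xi$, $\xi$, $1$ respectively, and since $\xi, \xi^2 \ne 1$ these three eigenvalues are distinct, so each relation splits into its isotypic components, and (as in Lemma \ref{xxlem3.8}) the $b_1^2$ and $b_1b_j+{}$mixed pieces must each vanish separately. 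Then Proposition \ref{xxprop3.5}(c) directly gives $A=C[b_1;\sigma]$ with $C$ the quantum polynomial ring generated by $b_2,\dots,b_n$, and $b_1$ normal in $A$; this is (b).

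For (c) and (d): from $A=C[b_1;\sigma]$ and the $g$-action $g|_C=\id_C$, $g(b_1)=\xi b_1$, the trace multiplies, $Tr_A(g,t)=Tr_C(\id,t)\cdot \frac{1}{1-\xi t}=\frac{1}{(1-t)^{n-1}(1-\xi t)}$, since $H_C(t)=(1-t)^{-(n-1)}$; as $\xi\neq 1$ this exhibits $g$ as a quasi-reflection, giving (c). For (d), $A^g$: writing $\zeta=\xi$ of order $N$, the fixed ring of the Ore extension $C[b_1;\sigma]$ under the action fixing $C$ and scaling $b_1$ by $\zeta$ is $C[b_1^N;\sigma^N]$, an Ore extension of the quantum polynomial ring $C$ by the automorphism $\sigma^N$, hence itself noetherian, a domain, of finite global dimension $n$, with Hilbert series $(1-t)^{-(n-1)}(1-t^N)^{-1}\cdot$(correcting the degree) — actually with $\deg b_1^N=N$ the Hilbert series is $(1-t)^{-(n-1)}(1-t^N)^{-1}$, which need not be $(1-t)^{-n}$, so $A^g$ is Artin–Schelter regular but not necessarily a quantum polynomial ring; I would invoke the standard fact that a graded Ore extension of a regular noetherian domain is again regular noetherian (as used in Example \ref{xxex4.4}), or alternatively Lemma \ref{xxlem1.10}(c) together with the free module decomposition $A=\bigoplus_{j=0}^{N-1} b_1^j A^g$. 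I expect the main obstacle to be the bookkeeping in step (b) — confirming rigorously that every relation is $\mathbb{Z}^2$-homogeneous, i.e. that no relation can mix the eigenvalue-$\xi^2$ term $b_1^2$ with other terms — but this follows cleanly from the eigenvalue separation $\{1,\xi,\xi^2\}$ being three distinct values when $\xi\neq\pm1$, exactly as in the proof of Lemma \ref{xxlem3.8}; once that is in hand, Proposition \ref{xxprop3.5} does the real work and (c), (d) are short consequences.
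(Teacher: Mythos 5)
Your proposal is correct and follows essentially the same route as the paper: (a) from the definition of reflection, (b) by observing that the eigenvalues $1,\xi,\xi^2$ on the three types of quadratic monomials are distinct (using $\xi\neq\pm1$) so the relations are $\mathbb{Z}^2$-homogeneous and Proposition \ref{xxprop3.5}(c) applies, (c) by factoring the trace over the Ore extension $A=\bigoplus_i b_1^iC$, and (d) via $A^g=C[b_1^N;\sigma^N]$. The only point the paper covers that you omit is the infinite-order case in (d), where $A^g=C$ is regular by (b) --- a triviality given that the lemma allows $g$ of infinite order.
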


\begin{proof} (a) This is clear by the definition of reflection
of $A_1$.

(b) Since all the relations of $A$ are quadratic, and the
order of $\xi$ is not $2$, $A$ becomes ${\mathbb Z}^2$-graded
after we assign $\deg b_1=(1,0)$ and $\deg b_j=(0,1)$ for
all $j\geq 2$. The assertion follows from Proposition
\ref{xxprop3.5}(c).

(c) Since $g(b_1)=\xi b_1$ and $A=C[b_1;\sigma]=\sum_{i\geq 0} b_1^i C$,
$$Tr_A(g,t)={\frac{1}{1-\xi t}} Tr_C(g,t)={\frac{1}{(1-t)^{n-1}
(1-\xi t)}}.$$
Hence $g$ is a quasi-reflection.

(d) It is clear that $A^g=C[b_1^w;\sigma^w]$ if the order
of $g$ is $w<\infty$, or $A^g=C$ if the order of $g$ is infinite.
\end{proof}

\begin{lemma}
\label{xxlem5.2}
Let $A$ be a quantum polynomial ring with $\GKdim A > 1 $and
let $G$ be a finite subgroup of $\Aut(A)$.
\begin{enumerate}
\item
If $G$ contains a quasi-reflection of order not
equal to $2$ or $4$, then $A\cong C[b;\sigma]$.
\item
Suppose that $A^G$ has finite global dimension
(and then $A^G$ is regular).
If the order of $G$ is odd, then $A\cong C[b;\sigma]$.
\item
Suppose that $A^G$ has finite global dimension
(and then $A^G$ is regular).
If $|G|=4m$ for some $m>1$ and $G$ does not contain
any reflections, then $G$ contains at least 4
mystic reflections.
\item
If $g$ is a reflection of order $2$, then
$A^g$ is regular and $A$ has a normal element
in degree $1$.
\item
If $G$ contains a reflection of order $2$, then
$A$ has a normal element in degree $1$.
\end{enumerate}
\end{lemma}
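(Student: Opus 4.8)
The plan is to deduce (a), (b), (e) quickly from results already in hand and to concentrate the real work in (d) and (c). I would first record that whenever $A^G$ has finite global dimension it is automatically regular: $A^G$ is a cofinite subring of $A$ with $A=A^G\oplus C$ as an $A^G$-bimodule [Lemma \ref{xxlem1.11}(a),(b)], so Lemma \ref{xxlem1.10}(c) applies, and the parenthetical remarks in (b) and (c) need no separate argument. For (a): a quasi-reflection $g\in G$ has finite order, so Theorem \ref{xxthm3.1} applies; since the order of $g$ is not $4$ we are in case (a), giving a basis $\{b_1,\dots,b_n\}$ of $A_1$ with $g(b_1)=\xi b_1$ and $g(b_j)=b_j$ for $j\geq 2$. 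Because $A$ is generated in degree $1$, the order of $g$ equals the order of $\xi$, which is not $2$; hence $g|_{A_1}$ is a reflection of order $\neq 2$ and Lemma \ref{xxlem5.1}(b) gives $A=C[b_1;\sigma]$. For (b): Theorem \ref{xxthm2.4} produces a quasi-reflection $g\in G$, and its order divides the odd integer $|G|$, so it is $\neq 2,4$ and (a) finishes the argument.

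For (d), let $g$ be a reflection of order $2$. By Theorem \ref{xxthm3.1}(a), and since $g$ has order $2$, we may take $g(b_1)=-b_1$ and $g(b_j)=b_j$ for $j\geq 2$; then $g|_{A_1}$ has trace $n-2$, and matching this against the $t$-coefficient of $Tr_A(g,t)=\tfrac{1}{(1-t)^{n-1}(1-\xi t)}$ (valid because $g$ is a quasi-reflection) forces $\xi=-1$, so $Tr_A(g,t)=\tfrac{1}{(1-t)^{n-1}(1+t)}$. Using the orthogonal idempotents $e_{\pm}=\tfrac{1}{2}(1\pm g)$ in $k[\mathbb{Z}/2]$ I would split $A=A^g\oplus A^-$ as an $A^g$-bimodule, where $A^-=\{x: g(x)=-x\}$, and Molien's theorem then yields $H_{A^g}(t)=\tfrac{1}{(1-t)^n(1+t)}$ and $H_{A^-}(t)=\tfrac{t}{(1-t)^n(1+t)}$. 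Since $A$ is a domain and $b_1\in A^-$ lies in degree $1$, both $b_1A^g$ and $A^gb_1$ are graded subspaces of $A^-$ with Hilbert series $tH_{A^g}(t)=H_{A^-}(t)$, hence $b_1A^g=A^-=A^gb_1$; thus $b_1$ normalizes $A^g$ and $A=A^g\oplus b_1A^g$ is a free $A^g$-module of rank $2$ on each side. Lemma \ref{xxlem1.10}(a) then gives $\gldim A^g<\infty$ and Lemma \ref{xxlem1.10}(c) gives that $A^g$ is regular, while $b_1A=A^gb_1\oplus A^gb_1^2=Ab_1$ shows $b_1$ is a degree-$1$ normal element. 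Part (e) follows at once by applying (d) to a reflection of order $2$ in $G$.

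For (c): by Theorem \ref{xxthm2.4} and the hypothesis that $G$ has no reflections, the set of quasi-reflections in $G$ is nonempty and consists entirely of mystic reflections; let $r\geq 1$ be its cardinality. Every mystic reflection has order $4$ (Theorem \ref{xxthm3.1}(b)), its inverse is again a mystic reflection (Lemma \ref{xxlem4.2} and the normal form in Proposition \ref{xxprop4.3}(a)), and it is never equal to its own inverse; hence these elements pair off and $r$ is even. Since $A^G$ is regular, write $H_{A^G}(t)=((1-t)^nq(t))^{-1}$; by Lemma \ref{xxlem1.6}(b),(c) $q$ is a product of cyclotomic polynomials $\Phi_d$ with $d\geq 2$, and Theorem \ref{xxthm2.5}(b) gives $q(1)=|G|=4m$ and $r=\deg q$. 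If $r=2$ then $\deg q=2$, which forces $q\in\{\Phi_2^2,\Phi_3,\Phi_4,\Phi_6\}$ and hence $q(1)\leq 4$, contradicting $q(1)=4m\geq 8$; so $r$ is even and at least $4$.

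I expect the main obstacle to be part (d): the delicate point is to upgrade the idempotent splitting $A=A^g\oplus A^-$ from a mere equality of Hilbert series to the structural identity $A^-=b_1A^g$, since it is precisely this identification (which uses the domain hypothesis) that makes $A$ free over $A^g$ and lets Lemma \ref{xxlem1.10} deliver the regularity of $A^g$ rather than just its Hilbert series. The second place that needs genuine care is the parity-plus-cyclotomic argument in (c) ruling out $r=2$.
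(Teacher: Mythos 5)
Your proposal is correct and follows essentially the same route as the paper's proof: parts (a), (b), (e) reduce to Theorem \ref{xxthm3.1}, Theorem \ref{xxthm2.4} and Lemma \ref{xxlem5.1}; part (d) uses the eigenspace splitting $A=A^g\oplus A^-$, Molien's theorem, and the Hilbert-series/domain argument to get $A^-=b_1A^g=A^gb_1$ and then Lemma \ref{xxlem1.10}(a,c); part (c) pairs each mystic reflection with its inverse and rules out $r=2$ via Theorem \ref{xxthm2.5}(b) and the bound $q(1)\leq 4$. The only cosmetic differences are your explicit coefficient computation forcing $\xi=-1$ in (d) and your enumeration of degree-two cyclotomic products in (c), both of which match the paper's conclusions.
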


Note that a quantum polynomial ring of $\GKdim
\leq 1$ is either $k$ or $k[x]$. Both of them are
commutative and the classical Shephard-Todd-Chevalley
theorem applies.

\begin{proof}[Proof of Lemma \ref{xxlem5.2}]
(a) Let $g$ be a quasi-reflection
of order not equal to $2$ or $4$. By Theorem \ref{xxthm3.1},
$g$ is a reflection, namely, $g|_{A_1}$ is a
reflection, and the assertion follows from Lemma
\ref{xxlem5.1}.

(b) By Theorem \ref{xxthm2.4} $G$ always
contains a quasi-reflection $g$. Then the order of $g$
is not $2$ or $4$, and the assertion follows from (a).

(c) If $g$ is a mystic reflection, so is $g^3$.
So the number of mystic reflections is even.
Assume there is no reflection and that there are only
2 mystic reflections.

Let $H_{A^G}(t)=[(1-t)^{n}q(t)]^{-1}$ where $q(1) \neq 0$.
By Theorem \ref{xxthm2.5}(b),
$\deg q(t)$ is equal to the number of quasi-reflections,
which is 2. Since the roots of $q(t)$ are all roots of unity
and the coefficients of $q(t)$ are non-negative integers,
$q(1)\leq 4$. So $|G|=4$, a contradiction.

(d) Let $g$ be a reflection of $A$ of order $2$. So
there is a basis of $A_1$, say $\{b_1,\cdots,b_n\}$
such that $g(b_1)=-b_1$ and $g(b_j)=b_j$ for all $j\geq 2$.

Let $A^{+}=\{x\in A\;|\; g(x)=x\}$ and $A^{-}=\{x\in
A\;|\; g(x)=-x\}$. Then $A^{+}=A^g$ and $A=A^{+}
\oplus A^{-}$ as $A^g$-bimodules. Since $g$ is a
quasi-reflection, $Tr_A(g,t)=[(1-t)^{n-1}(1+t)]^{-1}$.
Using Molien's theorem,
$$H_{A^g}={\frac{1}{2}}({\frac{1}{(1-t)^{n}}}+
{\frac{1}{(1-t)^{n-1}(1+t)}})=
{\frac{1}{(1-t)^{n-1}(1-t^2)}},$$
and hence
$$H_{A^{-}}(t)=H_A(t)-H_{A^g}(t)
={\frac{1}{(1-t)^{n}}}-
{\frac{1}{(1-t)^{n-1}(1-t^2)}}
={\frac{t}{(1-t)^{n-1}(1-t^2)}}.$$
Since $b_1\in A^{-}$, both $b_1 A^g$ and $A^g b_1$
are subspaces of $A^{-}$. Since $A$ is a domain,
$$H_{b_1 A^g}(t)=H_{A^g b_1}(t)={\frac{t}{(1-t)^{n-1}(1-t^2)}}=
H_{A^{-}}(t).$$
This implies that $b_1 A^g=A^g b_1=A^{-}$.
Recall that $b_j\in A^g$ for all $j\geq 2$; so $b_1$ is normal.
Hence $A$ is a free module over $A^g$ on both sides.
By Lemma \ref{xxlem1.10}(a,c), $A^g$ is regular.

(e) Follows from (d).
\end{proof}

Now we are ready to prove Theorem \ref{xxthm0.5}.

\begin{theorem}
\label{xxthm5.3}
Let $A$ be a quantum polynomial ring and let $g$ be a
graded algebra automorphism of $A$ of finite order.
\begin{enumerate}
\item
If $g$ is a quasi-reflection, then the fixed subring $A^g$
is  regular.
\item
Suppose the order of $g$ is $p^m$ for some
prime $p$ and some integer $m$. If the fixed subring $A^g$ has
finite global dimension, then $g$ is a quasi-reflection.
\end{enumerate}
\end{theorem}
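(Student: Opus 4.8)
Part (a) is the easy half: if $g$ is a quasi-reflection of finite order, then by the classification in Theorem \ref{xxthm3.1} either $g|_{A_1}$ is a reflection or $g$ is a mystic reflection of order $4$. In the reflection case, if the order of $g$ is not $2$ then Lemma \ref{xxlem5.1}(d) gives that $A^g$ is regular; if the order is $2$ then Lemma \ref{xxlem5.2}(d) gives the same conclusion. In the mystic case, Proposition \ref{xxprop4.3}(b) says exactly that $A^g$ is regular. So (a) is just a matter of invoking the right earlier result in each of the three cases.

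For part (b), the plan is to argue by contradiction: assume $g$ has order $p^m$, that $A^g$ has finite global dimension, but that $g$ is not a quasi-reflection. Since $\langle g\rangle$ is a finite group and $A^{\langle g\rangle}$ has finite global dimension, Theorem \ref{xxthm2.4} forces $\langle g\rangle$ to contain a quasi-reflection $h=g^j$ for some $j$. The key point is that a power of $g$ generates a subgroup of the cyclic group $\langle g\rangle$, and because $|\langle g\rangle|=p^m$ is a prime power, the only subgroups are $\langle g^{p^i}\rangle$. If $h=g^j$ with $p\nmid j$, then $h$ generates all of $\langle g\rangle$, hence $g$ itself is a power of $h$; I would then need to argue that a power of a quasi-reflection which is not a quasi-reflection cannot happen here — but it can (Example \ref{xxex2.3}(b) shows $g^2$ of a mystic reflection is not a quasi-reflection), so the real work is to rule out $p=2$ with $h$ a mystic reflection. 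If instead $p\mid j$, then $h\in\langle g^p\rangle$, so $h$ has order dividing $p^{m-1}$, and I would iterate or directly analyze the pole orders.

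Concretely, the cleanest route is: write the order of $g$ as $p^m$ and let $h$ be a generator of the (unique) smallest nontrivial subgroup of $\langle g\rangle$ that is a quasi-reflection — equivalently, let $p^s$ be minimal so that $g^{p^s}$... no; rather, among all quasi-reflections in $\langle g\rangle$, each is a power $g^j$, and the subgroup it generates is $\langle g^{\gcd(j,p^m)}\rangle=\langle g^{p^t}\rangle$ for some $t$. Take the quasi-reflection $h$ for which $t$ is largest, i.e. $h$ generates the smallest such subgroup. Then $h$ has prime order $p$ or is a mystic reflection of order $4$ (so $p=2$). In the order-$p$ case, $g$ is some power whose $p$-th power equals a power of $h$, and I use Lemma \ref{xxlem2.6} together with the trace formulas for powers of quasi-reflections (as computed in the proof of Theorem \ref{xxthm2.5}(a) and in Lemma \ref{xxlem4.2}) to show that the pole order of $Tr_A(g,t)$ at $t=1$ must already be $n-1$, contradicting that $g$ is not a quasi-reflection. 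The main obstacle — and where I expect to spend the most effort — is the case $p=2$ with a mystic reflection present: here powers genuinely fail to be quasi-reflections, so I must track the exact Laurent expansion of $Tr_A(g^{2^i},t)$ at $t=1$ using Lemma \ref{xxlem4.2} and Molien-type bookkeeping, and show that if $g$ itself has a pole of order $<n-1$ at $t=1$, then no power of $g$ can have pole order $n-1$ either, so $\langle g\rangle$ contains no quasi-reflection at all — contradicting Theorem \ref{xxthm2.4}. Equivalently: the pole order of $Tr_A(g^j,t)$ at $t=1$ is determined by the dimension of the eigenvalue-$1$ eigenspace of $g^j$ on $A_1$, which is non-decreasing as one passes to powers, so if $g$ is not a quasi-reflection (pole order $\le n-2$) then every power has pole order $\le n-2$, hence no quasi-reflection in $\langle g\rangle$, contradiction. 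This last observation — that pole order at $t=1$ equals $\dim A_1^{g}$ when $A$ is a quantum polynomial ring, via the Koszul property and Lemma \ref{xxlem1.7}/Proposition \ref{xxprop1.8} — is really the crux and makes the prime-power hypothesis almost superfluous, though one should double-check it genuinely uses nothing beyond what is already proved.
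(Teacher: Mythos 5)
Your part (a) is correct and is exactly the paper's argument (Proposition \ref{xxprop4.3}(b) for mystic reflections, Lemma \ref{xxlem5.2}(d) for reflections of order $2$, Lemma \ref{xxlem5.1}(d) otherwise). Part (b), however, hinges on a claim that is false. You propose that the order of the pole of $Tr_A(g,t)$ at $t=1$ equals $\dim A_1^{g}$ and hence is non-decreasing under taking powers, so that a non-quasi-reflection $g$ could have no quasi-reflection among its powers, contradicting Theorem \ref{xxthm2.4}. Both halves break down. The identification of the pole order with $\dim A_1^{g}$ fails for noncommutative quantum polynomial rings: in Example \ref{xxex2.3}(a) the involution $h$ fixes $y$ and negates $x$, so $\dim A_1^{h}=1$, yet $Tr_A(h,t)=(1+t^2)^{-1}$ has no pole at $t=1$. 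Worse, the monotonicity runs in the wrong direction for your purposes: since $\dim A_1^{g^j}\geq\dim A_1^{g}$, powers of $g$ can acquire a \emph{larger} pole at $t=1$ than $g$ itself has. Already for $A=k[x,y]$ with $g(x)=ix$, $g(y)=-y$ (order $4=2^2$), one has $Tr_A(g,t)=[(1-it)(1+t)]^{-1}$ with no pole at $t=1$, so $g$ is not a quasi-reflection, while $g^2$ is an honest reflection with $Tr_A(g^2,t)=[(1-t)(1+t)]^{-1}$, hence a quasi-reflection. So ``$g$ is not a quasi-reflection'' does not force ``$\langle g\rangle$ contains no quasi-reflection,'' and your contradiction never arrives. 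The underlying problem is that your argument uses the regularity of $A^{g}$ only once, to extract a single quasi-reflection from $\langle g\rangle$ via Theorem \ref{xxthm2.4}; that information alone cannot distinguish the example just given (where $A^{g}$ is not regular) from the situation of the theorem.

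The paper instead runs an induction on $m$ that consumes the regularity of $A^{G}$ at every stage. After the base case $m=1$ (where your Galois-conjugacy idea via Lemma \ref{xxlem1.4} does work, since every nontrivial power of $g$ then generates $\langle g\rangle$), and after disposing of the possibilities that the quasi-reflection produced by Theorem \ref{xxthm2.4} is mystic or of order $2$, the paper takes a reflection $h=g^{p^{w}}$ of order greater than $2$, writes $A=C[b_1;\sigma]$ by Lemma \ref{xxlem5.1}, observes that $A^{h}=C[b_1^{p^{m-w}};\sigma^{p^{m-w}}]$ is again a quantum polynomial ring on which $G'=G/\langle h\rangle$ acts with $(A^{h})^{G'}=A^{G}$ still regular, and applies the inductive hypothesis to the generator of $G'$ before transporting the conclusion back to $g$. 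Your plan contains no such descent mechanism, and your own closing remark that the prime-power hypothesis looks ``almost superfluous'' was the warning sign: the authors state the general case only as a conjecture.
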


\begin{proof}
(a) If $g$ is a mystic reflection, the assertion follows
from Proposition \ref{xxprop4.3}(a). Now let $g$ be a
reflection. If the order of $g$ is 2, this follows
from Lemma \ref{xxlem5.2}(d). If the order of $g$ is larger
than $2$, the assertion follows from Lemma \ref{xxlem5.1}(d).

(b) Suppose $A^g$ is regular. We use induction
on $m$. First assume $m$=1. By Theorem \ref{xxthm2.4},
$G:=\langle g\rangle$ contains a quasi-reflection $g^i$
(and hence a reflection since $p\neq 4$). Since
$p$ is prime, $g$ is a power of $g^i$. By Lemma
\ref{xxlem1.4}, $g$ is a quasi-reflection if and only
if $g^i$ is. So we are done.

Now we assume the order of $g$ is $p^m$ for $m\geq 2$.
By Theorem \ref{xxthm2.4}, $G$ contains a
quasi-reflection, say $g^i$ for some $i$.
If $g^i$ is a mystic
reflection, then the order of $g^i$ is $4$, and hence
$p=2$. There are at most two elements in $G$ of order 4.
By Lemma \ref{xxlem5.2}(c), the order of $G$ is 4.
Hence $i=1$ or $3$, and $g$ is a mystic reflection, and
hence a quasi-reflection, completing the argument.

As the above paragraph showed, there are at most 2
mystic reflections in $G$ since there are at most
two elements of order $4$. Similarly, there is
at most one element of order $2$ in $G$. Further,
if $G$ contains a mystic reflection, then
the element of order 2 is not a quasi-reflection
by Lemma \ref{xxlem4.2}. If $G$ contains
only one quasi-reflection $g$, then $g=g^{-1}$
since $g^{-1}$ is also a quasi-reflection by Lemma
\ref{xxlem1.4}. Thus $|G|=2$ and this
case has been taken care of when $m=1$.

Now suppose that we are not in the cases discussed
in the above two paragraphs; then $G$ contains a
reflection $h$ of order not equal to $2$. Without loss
of generality we may write this element as
$h:=g^{p^w}$ for some $w<m$. So the order of
$h$ is $p^{m-w}$. If $w=0$, then we are
done. Hence we assume that $w>0$.
Let $\{b_1,\cdots,b_n\}$ be a basis of $A_1$
such that $g(b_j)=\xi_j b_j$ for all $j$; further let
$h(b_1)= \xi b_1$ and $h(b_j)=b_j$ for all
$j>2$. Clearly, $\xi=\xi_1^{p^w}$. Since the
order of $\xi$ is equal to the order of $h$,
which is $p^{m-w}$, the order of $\xi_1$ is
$p^m$. By Lemma \ref{xxlem5.1}(a,b),
$A=C[b_1;\sigma]$. Let $A'=A^{h}=
C[b_1^{p^{m-w}};\sigma^{p^{m-w}}]$. Then
the $G$ action on $A$ induces a $G':=G/(h)$
action on $A'$. Reassigning the degree as $\deg
b_1^{p^{m-w}}=1$, $A'$ is a quantum polynomial
ring. It is clear that $A^G= A'^{G'}$.
Since $w>0$, $G'$ is generated by $g':=g(h)\in G'$
of order $p^{w}$ which is less than $p^m$.
Since $A'^{G'}$ is regular, by induction $g'$
is a quasi-reflection. Finally we have two cases
to deal with. First we assume that
$g'$ is not a reflection of $A'$. Then
it is a mystic reflection of $A'$. So $p^{w}=2^2$.
By the choice of $\{b_j\}$ we have
$$g'(b_1^{2^{m-2}})=g(b_1^{2^{m-2}})=
\xi_1^{2^{m-2}}b_1^{2^{m-2}}=\pm i b_1^{2^{m-2}}.$$
Without loss of generality we only consider the $+ i$
case since the $- i$ case is similar.
Up to a permutation we have
$g'(b_2)=g(b_2)=- i b_2$ and $g'(b_j)=g(b_j)=b_j$
for all $j\geq 3$. By Proposition \ref{xxprop4.3}(c),
$(b_1^{2^{m-2}})^2=c b_2^2$ for some nonzero
scalar $c$, but this is impossible in $A$.
This leaves us the second and the last case:
$g'$ is a reflection of $A'$. By the
choice of $\{b_j\}$, we have
$$g'(b_1^{p^{m-w}})=g(b_1^{p^{m-w}})=
\xi_1^{p^{m-w}}b_1^{p^{m-w}}\neq b_1^{p^{m-w}}$$
and $g'(b_j)=g(b_j)=\xi_jb_j$ for all $j\geq 2$.
By the definition of reflection, we conclude
that $\xi_j=1$ for all $j\geq 2$. Therefore $g$
is a reflection.
\end{proof}

Finally we give an example showing that a reflection
of order $2$ does exist for some $A$ not isomorphic to
$C[b_1;\sigma]$.

\begin{example}
\label{xxex5.4}
Let $A$ be the Rees ring of the first Weyl algebra
with respect to the standard filtration.
So $A$ is generated by $x,y$ and $z$ subject to the
relations
$$xy-yx=z^2,\quad \text{$z$ is central}.$$
Let $g$ be the automorphism of $A$ determined by
$$g(x)=x,g(y)=y,\quad \text{and}\quad g(z)=-z.$$
Then $g$ is of order 2. Since $z$ is central, it
is easy to check that $Tr_A(g,t)=[(1-t)^2(1+t)]^{-1}$.
Hence $g$ is a quasi-reflection and $g|_{A_1}$ is
a reflection. So $g$ is a reflection in the sense of
Definition \ref{xxdefn3.9}.

(a) $A^g$ is regular by Theorem \ref{xxthm5.3}(a).

(b) We claim that $A\not\cong C[b;\sigma]$. Suppose
$A=C[b;\sigma]$, then it is easy to check that $z$ (up to a
scalar) is  the only normal element in degree 1.
Thus $b=z$ and $C=A/(z)$ is commutative. Since
$b=z$ is central, then $A$ is commutative, a contradiction.
Thus $A\not\cong C[b;\sigma]$.

(c) The regular fixed subring $A^g$ is generated by $x$ and $y$, and is
isomorphic to $U(L)$, where $L$ is the Lie algebra
$kx+ky+kw$ where $w=z^2=[x,y]$. Hence
the fixed subring $A^g$ is a regular ring that is different than
$A$.  We note that $U(L)$ is a two-generated regular ring of
dimension 3, hence Proposition \ref{xxprop6.4} will show that
it does not have any quasi-reflections of finite order, so it
is rigid.  Hence $U(L)$ can be a fixed subring
of a regular ring, but it cannot be the fixed subring of a
finite group acting on itself.

We will examine the Rees ring of $A_n(k)$ in the next
section [Proposition \ref{xxprop6.7} and Corollary \ref{xxcor6.8}].
\end{example}

\section{Rigidity theorems}
\label{sec6}

In this section we prove the rigidity theorems \ref{xxthm0.1}
and \ref{xxthm0.2} stated in the introduction.

\begin{lemma}
\label{xxlem6.1}
Let $A$ be a noetherian regular algebra. Suppose $A$ has
no quasi-reflection of finite order.  Then:
\begin{enumerate}
\item
For every finite group $G\subset \Aut(A)$, $A^G$ has infinite
global dimension.
\item
For every finite group $G\subset \Aut(A)$, $A^G$ is not
isomorphic to $A$.
\end{enumerate}
\end{lemma}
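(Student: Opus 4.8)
The plan is to read off both statements directly from Theorem \ref{xxthm2.4}; no genuinely new idea is needed. Throughout I take $G$ to be a non-trivial finite subgroup of $\Aut(A)$ (for the trivial group $A^G=A$, so the statement is to be understood with $G$ non-trivial).

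For part (a) I would argue by contraposition. Since $G$ is finite, every $g\in G$ has finite order, so any quasi-reflection lying in $G$ would be a quasi-reflection of $A$ \emph{of finite order}, which the hypothesis forbids; hence $G$ contains no quasi-reflection whatsoever. Now $A$ is noetherian and regular, so Theorem \ref{xxthm2.4} applies: if $A^G$ had finite global dimension, then $G$ would have to contain a quasi-reflection. This contradiction shows that $A^G$ has infinite global dimension.

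For part (b) I would argue by contradiction: suppose $A^G\cong A$ as ungraded algebras. By Lemma \ref{xxlem1.11}(a), $A^G$ is a connected graded noetherian algebra, and for such an algebra the graded global dimension agrees with the ungraded one (both being computed by the projective dimension of the trivial module $k$). Since $A$ is regular it has finite global dimension, so an ungraded isomorphism $A^G\cong A$ forces $A^G$ to have finite global dimension, contradicting part (a). Hence $A^G\not\cong A$.

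The only point that calls for a word of justification is the coincidence of graded and ungraded global dimension for $A^G$ used in part (b), and this is a standard fact about connected graded algebras. The real content of the lemma is entirely carried by Theorem \ref{xxthm2.4}, whose proof (via Molien's theorem and the palindrome identity of Lemma \ref{xxlem1.9}) is where the work lies; consequently I do not anticipate any serious obstacle here — Lemma \ref{xxlem6.1} is essentially a repackaging of Theorem \ref{xxthm2.4} in the form needed for the rigidity applications later in this section.
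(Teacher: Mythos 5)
Your proposal is correct and matches the paper's own (two-line) proof exactly: part (a) is an immediate application of Theorem \ref{xxthm2.4}, and part (b) follows from (a) since an isomorphism $A^G\cong A$ would force $A^G$ to have finite global dimension. Your extra remarks — excluding the trivial group (for which the statement is vacuously about the identity, which the paper explicitly does not count as a quasi-reflection) and noting that graded and ungraded global dimension agree for the connected graded algebra $A^G$ — are sensible points of care that the paper leaves implicit.
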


\begin{proof} (a) This is Theorem \ref{xxthm2.4}.

(b) If $A^G$ is isomorphic to $A$, then $A^G$ has finite
global dimension, and so the assertion follows from (a).
\end{proof}

\begin{theorem}
\label{xxthm6.2} Let $A$ be a quantum polynomial ring.
Suppose that one of the following condition holds.
\begin{enumerate}
\item
$A$ has no element $b$ of degree 1 such that $b^2$ is normal
in $A$.
\item
$A$ has no normal element in degree 1, and no
subalgebra isomorphic to $k_{-1}[b_1,b_2]$.
\end{enumerate}
Then the following conditions hold.
\begin{enumerate}
\item[(i)]
$A$ has no quasi-reflection of finite order.
\item[(ii)]
For every finite group $G\subset \Aut(A)$, $A^G$ has infinite
global dimension.
\item[(iii)]
For every finite group $G\subset \Aut(A)$, $A^G$ is not
isomorphic to $A$.
\end{enumerate}
\end{theorem}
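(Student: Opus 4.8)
The plan is to observe that parts (ii) and (iii) follow formally from part (i): once we know that $A$ admits no quasi-reflection of finite order, Lemma~\ref{xxlem6.1}(a) gives (ii) and Lemma~\ref{xxlem6.1}(b) gives (iii). So the substance lies entirely in (i), and the strategy there is to assume $A$ has a quasi-reflection $g$ of finite order and to contradict whichever of (a), (b) is being assumed. The main input is the classification Theorem~\ref{xxthm3.1}: such a $g$ is either (I) a reflection of $A_1$, or (II) a mystic reflection, that is, of order $4$ and acting on a suitable basis $\{b_1,\dots,b_n\}$ of $A_1$ by $b_1\mapsto ib_1$, $b_2\mapsto -ib_2$, $b_j\mapsto b_j$ for $j\ge 3$. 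In each case the structural results of Sections~\ref{sec3}--\ref{sec5} manufacture precisely the features that (a) and (b) forbid.

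In Case (I), if $g$ has order different from $2$, then Lemma~\ref{xxlem5.1}(a,b) presents $A$ as an Ore extension $C[b_1;\sigma]$ with $g(b_1)=\xi b_1$, $\xi\neq 1$, and with $g$ fixing the generators of the quantum polynomial ring $C$; by Proposition~\ref{xxprop3.5}(c) the element $b_1\in A_1$ is normal, whence $b_1^2$ is normal, contradicting (a), and $b_1$ is itself a normal element of degree $1$, contradicting (b). If $g$ has order $2$, then the basis of Theorem~\ref{xxthm3.1}(a) satisfies $g(b_1)=-b_1$, $g(b_j)=b_j$ for $j\ge 2$, and Lemma~\ref{xxlem5.2}(d) shows directly that $b_1$ is normal in $A$; the same two contradictions follow. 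The remaining degenerate quantum polynomial rings, those with $\GKdim A\le 1$, are $k$ (which has no nontrivial graded automorphism) and $k[x]$ (commutative, so its only quasi-reflections $x\mapsto \xi x$ visibly violate both (a) and (b)), so they cause no trouble.

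In Case (II), Proposition~\ref{xxprop4.3}(d) asserts that $b_1^2$ is normal in $A$, contradicting (a), while Proposition~\ref{xxprop4.3}(c) identifies the subalgebra of $A$ generated by $b_1$ and $b_2$ with $k_{-1}[b_1,b_2]$, contradicting the second clause of (b). Hence a quasi-reflection of finite order cannot exist under either hypothesis, which is (i), and (ii), (iii) follow as above. The genuine work behind this theorem was done upstream --- in Theorem~\ref{xxthm3.1}, the Ore-extension analysis of Proposition~\ref{xxprop3.5} and Lemma~\ref{xxlem5.1}, and the description of mystic reflections in Proposition~\ref{xxprop4.3} --- so here the only points needing care are checking that (a) and (b) are each individually strong enough to eliminate every branch of the classification, and keeping straight the order-$2$ reflections (which lie outside the reach of Lemma~\ref{xxlem5.1}) and the low-dimensional degeneracies. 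I do not anticipate a real obstacle.
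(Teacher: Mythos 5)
Your proposal is correct and follows essentially the same route as the paper: reduce (ii) and (iii) to (i) via Lemma~\ref{xxlem6.1}, then rule out reflections by producing a degree-one normal element (Lemma~\ref{xxlem5.1}(b) for order $\neq 2$, Lemma~\ref{xxlem5.2}(d)/(e) for order $2$) and rule out mystic reflections via Proposition~\ref{xxprop4.3}(c),(d). Your extra attention to the $\GKdim A\le 1$ degeneracies and the explicit order-$2$/order-$\neq 2$ split only makes explicit what the paper leaves implicit.
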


\begin{proof} By Lemma \ref{xxlem6.1} we only need
to show (i). So we consider the two cases.

(a) If $A$ has a reflection of finite order, then by Lemmas
\ref{xxlem5.1}(b) and \ref{xxlem5.2}(e), $A$ has a normal
element $b$ in degree 1. Then $b^2$ is normal, a contradiction.
If $A$ has a mystic reflection, by Proposition \ref{xxprop4.3}(d),
$b_1^2$ is normal, a contradiction. So the assertion (i)
follows.

(b) As in case (a), if $A$ has a reflection of finite order,
$A$ has a normal element $b$ in degree 1. This is a
contradiction. If $A$ has a mystic reflection, $A$
has a subalgebra isomorphic to $k_{-1}[b_1,b_2]$
by Proposition \ref{xxprop4.3}(c).
\end{proof}

\begin{corollary}
\label{xxcor6.3}
Let $S$ be a non-PI Sklyanin algebra of global dimension $n
\geq 3$. Then $S$ has no quasi-reflection of finite order.
As a consequence, $S^G$ is not regular, and so $S$ is not
isomorphic to $S^G$, for any
non-trivial finite group $G$ of graded algebra automorphisms.
\end{corollary}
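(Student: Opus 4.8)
The plan is to deduce the corollary directly from Theorem \ref{xxthm6.2}. As recorded in the introduction, a Sklyanin algebra of global dimension $n$ is a quantum polynomial ring, so Theorem \ref{xxthm6.2} applies as soon as one of its two hypotheses is verified; I would verify hypothesis (a), namely that $S$ has no element $b\in S_1$ with $b^2$ normal in $S$. Granting this, conclusion (i) of Theorem \ref{xxthm6.2} gives that $S$ has no quasi-reflection of finite order, conclusion (ii) gives that $S^G$ has infinite global dimension for every nontrivial finite $G\subset\Aut(S)$ (in particular $S^G$ is not regular), and conclusion (iii) — or simply the fact that $S$ itself has finite global dimension while $S^G$ does not — gives $S\not\cong S^G$ as ungraded algebras. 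Thus the whole corollary reduces to the hypothesis check, and this is the only place the non-PI assumption is used.

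For the hypothesis check I would argue by contradiction: if some $b\in S_1$ had $b^2$ normal, then $b^2$ is a nonzero normal element of degree $2$ (nonzero because $S$ is a domain). So it suffices to show that a non-PI Sklyanin algebra of dimension $n\ge 3$ has no normal element of degree $1$, and no normal element of degree $2$ that is a square of a linear form. I would split by dimension, invoking the classification of Sklyanin algebras and of their normalizing elements. For $n=3$ the only normal elements are the scalar multiples of powers of the central element $g$ of degree $3$, so there is nothing normal in degrees $1$ or $2$. For $n=4$ the degree-$2$ normal elements form the pencil spanned by the two central quadrics $\Omega_1,\Omega_2$ whose base locus is the defining elliptic curve; in the non-PI case every member of that pencil has rank at least $3$, hence is not of the form $b^2$. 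For $n\ge 5$ one argues similarly from the known normalizing sequence, the point again being that there is no degree-$1$ normal element and no square $b^2$ among the normal elements of degree $2$. (Alternatively one could verify hypothesis (b) of Theorem \ref{xxthm6.2}: no degree-$1$ normal element and no subalgebra isomorphic to $k_{-1}[b_1,b_2]$; by Proposition \ref{xxprop4.3}(c) the latter would force a relation of $S$ of the shape $b_1^2-b_2^2=0$, which is ruled out by inspecting the span of the defining relations of $S$ after an arbitrary linear change of variables.)

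The main obstacle is precisely this structural input — pinning down which low-degree elements of a non-PI Sklyanin algebra are normal. This is the geometric heart of the argument: it relies on the structure theory of Sklyanin algebras (noetherian Auslander-regular domains, with point scheme the elliptic curve $E$ and automorphism $\sigma$ of infinite order) and on the observation that a degree-$1$ normal element, or a normal square $b^2$, would force a $\sigma$-invariant hyperplane, respectively a degenerate quadric, in the defining geometric data, which does not occur when $\sigma$ has infinite order. Once that input is in place, everything else is a mechanical application of the machinery already built: Theorem \ref{xxthm3.1}, Proposition \ref{xxprop4.3}, Lemmas \ref{xxlem5.1} and \ref{xxlem5.2}, and Theorem \ref{xxthm6.2}.
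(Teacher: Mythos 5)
Your reduction of the corollary to Theorem \ref{xxthm6.2}(a) is exactly the paper's first step, and your reading of conclusions (i)--(iii) is correct. The divergence, and the gap, is in the verification that no $b\in S_1$ has $b^2$ normal. The paper does this uniformly in $n$ without classifying normal elements: it uses the canonical surjection $\phi\colon S\to B:=B(E,\sigma,\mathcal L)$ onto the twisted homogeneous coordinate ring, which is an isomorphism in degree $1$ (Artin--Tate--Van den Bergh for $n=3$, Smith--Stafford for $n=4$, Tate--Van den Bergh for $n\geq 5$). Since $S$ is non-PI, $\sigma$ has infinite order, so $B$ is projectively simple (every proper graded factor is finite dimensional) and has GK-dimension $2$. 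If $b^2$ were normal then $\bar b=\phi(b)\neq 0$, $\bar b^2\neq 0$ ($B$ is a domain), $\bar b^2$ is normal in $B$, and $B/(\bar b^2)$ is an infinite-dimensional proper factor --- contradiction. Note that this argument must handle the case where $b^2$ maps to a \emph{nonzero} normal element of $B$; projective simplicity is what kills that case.

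Your route instead asks for a complete list of the degree-$1$ normal elements and of the degree-$2$ normal elements of the form $b^2$, dimension by dimension. For $n\geq 5$ the appeal to ``the known normalizing sequence'' is not a proof: that sequence generates the kernel of $\phi$, but a normal square $b^2$ need not lie in that kernel, so knowing the kernel's generators does not rule it out, and no classification of all low-degree normal elements of the $n$-dimensional Sklyanin algebras is available to cite. Even at $n=4$, the ``rank at least $3$'' argument conflates the noncommutative element $b^2\in S_2$ with a rank-one commutative quadric; the two central quadrics $\Omega_1,\Omega_2$ span the kernel of $\phi$ in degree $2$, so the clean way to exclude $b^2\in k\Omega_1+k\Omega_2$ is again that $\phi(b^2)=\bar b^2\neq 0$ in the domain $B$ --- which is the paper's mechanism. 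Your closing geometric remark (a normal $b^2$ would force a $\sigma$-invariant configuration, impossible for $\sigma$ of infinite order) is the right idea and is essentially equivalent to projective simplicity of $B$, but as written it is an assertion rather than an argument; to complete your proof you should either carry out that divisor computation on $E$ or, more simply, adopt the factor-ring argument through $B(E,\sigma,\mathcal L)$.
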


\begin{proof} By Theorem \ref{xxthm6.2} it suffices to check
that $S$ has no element $b$ in degree 1 such that $b^2$ is normal.

Associated to $S$ there is a triple $(E,\sigma,{\mathcal L})$
where $E\subset {\mathbb P}^{n-1}$ is an elliptic curve of
degree $n$, ${\mathcal L}$ is an invertible line
bundle over $E$ of degree $n$ and $\sigma$ is an automorphism
of $E$ induced by the translation. The basic properties of $S$
can be found in \cite{ATV1} for $n=3$, \cite{SmSt} for $n=4$, and
\cite{TV} for $n\geq 5$. Associated to $(E,\sigma,{\mathcal L})$
one can construct the twisted homogeneous coordinate ring,
denoted by $B(E,\sigma,{\mathcal L})$. Then there is a
canonical surjection
$$\phi:S\to B(E,\sigma,{\mathcal L})=:B$$
such that
$\phi$ becomes an isomorphism when restricted to degree 1
piece. This statement was proved by Tate-Van den Bergh
\cite[(4.3)]{TV} for $n\geq 5$, by Smith-Stafford
\cite[Lemma 3.3]{SmSt} for $n=4$ and by Artin-Tate-Van den Bergh
\cite[Section 6]{ATV1} for $n=3$. If $S$ is non-PI, then
$\sigma$ has infinite order. Hence $B$ is so-called projectively
simple \cite{RRZ}, which means that any proper factor ring
of $B$ is finite dimensional. Also note that the
GK-dimension of $B$ is 2.

Suppose that there is a $b\in S$ of degree 1, such that $b^2$ is
normal. Let $\bar{b}=\phi(b)\in B$. Since $\phi$ is an
isomorphism in degree 1, $\bar{b}\neq 0$. Now
a basic property of $B$ is that it is a domain.
Hence $\bar{b}^2\neq 0$, and since $b^2$ is normal, so is
$\bar{b}^2$. Therefore $B/(\bar{b}^2)$ is an infinite
proper factor ring of $B$, which contradicts the fact
that $B$ is projectively simple.
\end{proof}

We note that an extensive calculation shows that Corollary \ref{xxcor6.3}
is also true for 3 dimensional PI Sklyanin algebras, suggesting
that the PI hypothesis may not be necessary.

Next we give a class of regular rigid algebras that are
not quasi-polynomial rings.

\begin{proposition}
\label{xxprop6.4}
Let $A$ be a noetherian regular algebra of global
dimension 3 that is generated by two elements in degree 1.
Then $A$ has no quasi-reflection of finite order, and hence
no regular fixed subrings $A^G$ for $G$ a finite group.
\end{proposition}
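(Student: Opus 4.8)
The plan is to suppose, for contradiction, that $A$ admits a quasi-reflection $g$ of finite order. Granting this, the assertion about fixed rings follows from Theorem~\ref{xxthm2.4}: every element of a finite group $G$ has finite order, so $G$ contains no quasi-reflection, hence $A^G$ has infinite global dimension; in particular $A^G$ is not regular and $A^G\not\cong A$.

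For the main claim, I would first invoke the Artin--Schelter classification \cite{ASc}: a noetherian regular algebra of global dimension $3$ generated by two degree-one elements is a cubic regular algebra, with $H_A(t)=((1-t)^2(1-t^2))^{-1}$, $\GKdim A=3$, no relations in degrees $\le 2$ (so $A_2=(A_1)^{\otimes 2}$), a two-dimensional space of relations $R\subseteq (A_1)^{\otimes 3}$ with $A_3=(A_1)^{\otimes 3}/R$, and minimal graded free resolution $0\to A(-4)\to A(-3)^2\to A(-1)^2\to A\to k\to 0$. Since $g$ preserves the relations, $R$ is $g$-stable. Being a quasi-reflection, $g\neq\id$ and $Tr_A(g,t)=((1-t)^2q(t))^{-1}$ with $q(0)=1$, $q(1)\neq 0$; by Lemma~\ref{xxlem1.6}(d), $e_g(t):=Tr_A(g,t)^{-1}$ has degree $4$, so $\deg q=2$. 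As $g$ has finite order it is diagonalizable on $A_1$: write $g(b_i)=\xi_i b_i$ with $\xi_1,\xi_2$ roots of unity. Applying $\tr(g\mid -)$ to the $g$-equivariant resolution and using additivity of the trace gives
\[
e_g(t)=1-(\xi_1+\xi_2)\,t+\tr(g\mid R)\,t^3-(\hdet g)\,t^4 ,
\]
where the identification of the top coefficient with $\hdet g$ comes from comparing the leading term of $e_g$ with \eqref{1.8.1} (with $d=3$). Writing $u=\xi_1+\xi_2$, $v=\tr(g\mid R)$, $h=\hdet g$, and imposing that $e_g(t)=(1-t)^2q(t)$ has a double root at $t=1$ (i.e.\ $e_g(1)=e_g'(1)=0$), the resulting linear system gives $u=(h+3)/2$ and $v=(3h+1)/2$.

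Next I would pin down $h$. It is a root of unity, being a value at a finite-order element of the homomorphism $\Aut(A)\to k^\times$, and $\xi_1,\xi_2$ are not both $1$ (if $\xi_1=1$ then $u=(h+3)/2$ forces $\xi_2=(h+1)/2$, whence $|\xi_2|=1$ forces $h=1$, $\xi_2=1$ and $g=\id$). Rewriting $u=(h+3)/2$ as $3=\xi_1+\xi_1+\xi_2+\xi_2+(-h)$: if $h\neq -1$, this exhibits $3$ as a sum of five roots of unity, none equal to $1$, which is impossible by Lemma~\ref{xxlem3.2} (such a sum exists only when $n\in\{0,2\}$). Hence $h=-1$, so $u=1$; since $\xi_1=1-\xi_2$ is then a root of unity at distance $1$ from $1$, we get $\{\xi_1,\xi_2\}=\{\zeta_6,\zeta_6^{-1}\}$, and $v=-1$. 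But the eigenvalues of $g$ on $(A_1)^{\otimes 3}$ are $\xi_1^{a}\xi_2^{3-a}=\zeta_6^{\,2a-3}$ for $a=0,1,2,3$, so they lie in $\{-1,\zeta_6,\zeta_6^{-1}\}$; thus $v=\tr(g\mid R)$ is a sum of two elements of that set, and the six possible values $-2$, $\zeta_6-1$, $\zeta_6^{-1}-1$, $2\zeta_6$, $2\zeta_6^{-1}$, $\zeta_6+\zeta_6^{-1}=1$ are all $\neq -1$. This contradiction completes the argument.

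The step I expect to be delicate is establishing the precise form of $e_g$ --- that its $t^2$-coefficient vanishes and that its top coefficient is exactly $\hdet g$ (which requires care with the sign/shift conventions in \eqref{1.8.1} and with identifying the homological-degree-two generator space of the resolution with $R$ rather than with $R^\ast$). The remaining ingredients --- the linear algebra, the numerical rigidity supplied by Lemma~\ref{xxlem3.2}, and the short check with sixth roots of unity --- are routine.
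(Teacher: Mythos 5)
Your proof is correct, and it reaches the conclusion by a genuinely different route from the paper's. The paper also begins with the Artin--Schelter form $H_A(t)=((1-t)^2(1-t^2))^{-1}$ and writes $Tr_A(g,t)=((1-t)^2(1-\xi_1t)(1-\xi_2t))^{-1}$ with $\xi_1,\xi_2$ roots of unity (via Lemma \ref{xxlem1.6}(d,e)), but it then compares only the degree-one coefficient, $\tr(g|_{A_1})=x_1+x_2=2+\xi_1+\xi_2$, applies Lemma \ref{xxlem3.2} to the resulting four-term sum equal to $2$ to get three candidate solutions, and kills each one by computing $\tr(g|_{A_2})$ or $\tr(g|_{A_3})$ directly from the eigenvalues on tensor powers of $A_1$. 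You instead extract the full shape of the Euler polynomial $e_g(t)=1-ut+vt^3-ht^4$ from the minimal free resolution --- in particular the vanishing of the $t^2$-coefficient and the identification of the $t^3$-coefficient with $\tr(g|_R)$, neither of which the paper uses --- so that the double root at $t=1$ reduces everything to the single parameter $h$; your one application of Lemma \ref{xxlem3.2} is to the five-term sum equal to $3$, the ``no solution'' case, which forces $h=-1$, hence $\{\xi_1,\xi_2\}=\{\zeta_6,\zeta_6^{-1}\}$ and $v=-1$, and the contradiction is a single check on $\tr(g|_R)$. This buys a case-free argument at the price of more homological input (the resolution formula for $e_g$ from [JiZ, Theorem 2.3] and the leading-term identification via \eqref{1.8.1}), whereas the paper needs only the degree of $e_g$ and that its roots are roots of unity. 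The two points you flag as delicate are genuinely the only ones, and both are harmless: your argument only needs the top coefficient $h$ to be a root of unity (it is the eigenvalue of the finite-order $g$ on the one-dimensional top term of the resolution), so the exact normalization against $\hdet g$ is immaterial; and the list of candidate values for $\tr(g|_R)$ is closed under complex conjugation, so identifying $\Tor_2^A(k,k)$ with $R$ rather than its dual does not affect the final contradiction. Your handling of the degenerate case $\xi_1=1$ (forcing $h=1$ by the equality case of the triangle inequality and hence $g=\id$) is also correct and is needed before Lemma \ref{xxlem3.2} can be invoked.
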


\begin{proof} By the Artin-Schelter classification \cite{ASc},
the Hilbert series of $A$ is
$$H_A(t)={\frac{1}{(1-t)^2(1-t^2)}}.$$
In particular, $A$ has GK-dimension 3 and has two relations
of degree 3. Let $g$ be a possible quasi-reflection of $A$
of finite order. Then the trace of $g$ is
$$Tr_A(g,t)={\frac{1}{(1-t)^2(1-\xi_1 t)(1-\xi_2 t)}}$$
where $\xi_1$ and $\xi_2$ are roots of unity by Lemma
\ref{xxlem1.6}(d).

Let $\{b_1,b_2\}$ be a basis of $A$ such that $g(b_i)=x_i b_i$
for $i=1,2$, where $x_1$ and $x_2$ are root of unity.
Comparing the coefficients of $t$ in the Maclaurin series expansion of
$Tr_A(g,t)$, we obtain that
$$tr (g|_{A_1})=x_1+x_2=1+1+\xi_1+\xi_2.$$
By Lemma \ref{xxlem3.2} there are three solutions:

Solution 1: $\xi_1=\xi_2=-1$, $x_1=-x_2$.

Solution 2: $\xi_1=-1$, $x_1=1$, $\xi_2=x_2$ up to
a permutation.

Solution 3: $\{x_1,x_2,-\xi_1,-\xi_2\}=
\{\zeta_6,\zeta_6,\zeta_6^5,\zeta_6^5\}$
up to a permutation.

Next we show that each of these is impossible.

Solution 1: Since $\xi_1=\xi_2=-1$, $tr (g|_{A_2})=2$.
The eigenvalues of $g|_{A_2}$ are $x_1^2$ with eigenspace
$k b_1^2+kb_2^2$ and $-x_1^2$ with eigenspace $kb_1b_2+kb_2b_1$.
So $tr (g|_{A_2})=0$, a contradiction.

Solution 2: Since $\xi_1=-1$ and $\xi_2=x_2$,
$tr (g|_{A_2})=2+x_2+x_2^2$. Applying $g$ to
the space $A_2$, we see that $tr (g|_{A_2})=
1+2x_2+x_2^2$. Hence $x_2=1$. This is impossible
since $g$ is not the identity.

Solution 3: If $x_1=x_2$, then $Tr_A(g,t)=H_A(x_1t)$
which shows that $g$ is not a quasi-reflection.
Hence $x_1\neq x_2$. Up to a permutation
we may assume $x_1=-\xi_1=\zeta_6$ and $x_2=-\xi_2
=\zeta_6^5$. Expanding $Tr_A(g,t)$, we have
$$Tr_A(g,t)={\frac{1}{(1-t)^2(1+\zeta_6 t)(1+\zeta_6^5 t)}}
=1+t+t^2+2t^3+\cdots.$$
Consequently, $tr (g|_{A_3})=2$. Now consider $g|_{A_3}$.
The eigenvalues of $g|_{A_3}$ are either $-1
(=\zeta_6^3=(\zeta_6^5)^3)$, $\zeta_6$ or $\zeta_6^5$.
So we have
$$2=tr (g|_{A_3})=n_1(-1)+n_2\zeta_6+n_3\zeta_6^5,
\quad n_1,n_2,n_3\geq 0$$
where $n_1+n_2+n_3= 6$ is the $\dim A_3$. But this
is impossible.
\end{proof}

Proposition \ref{xxprop6.4} applies to a noetherian
graded down-up algebra $A= A(\alpha,\beta,0)$, where
$\beta \neq 0$ (see \cite{BR, KMP}). This
algebra is generated by $d,u$
subject to the two relations:
$$du^2=\alpha udu + \beta u^2 d \;\; \text{   and   }
\;\; d^2u=\alpha dud + \beta ud^2.$$
It is a noetherian regular algebra of global
dimension 3, and so by the above proposition, $A$ has no
quasi-reflection of finite order.

Let ${\mathfrak g}$ be a Lie algebra finite dimensional
over $k$ with Lie bracket $[\;,\;]$. Let $\{b_1,\cdots,b_n\}$
be a $k$-linear basis of ${\mathfrak g}$. The homogenization
of $U({\mathfrak g})$, denoted by $H({\mathfrak g})$, is
defined to be its Rees ring with respect
to the standard filtration of $U({\mathfrak g})$. It is
a connected graded
algebra generated by the vector space ${\mathfrak g}+kz$
subject to the relations
$$b_iz=zb_i\quad \text{and}\quad b_ib_j-b_jb_i=[b_i,b_j]z$$
for all $i,j$. To distinguish it from the Lie product, we
use  $\lfloor x,y\rfloor$ to denote $xy-yx$ in an algebra.
Then the relations of $H({\mathfrak g})$ can be written as
$$\lfloor b_i,z\rfloor =0, \quad \text{and}\quad
\lfloor b_i,b_j\rfloor =[b_i,b_j]z.$$
It is well-known that $H({\mathfrak g})$ is a quantum
polynomial ring of dimension $n+1$ \cite[\S 12]{Sm2}.
By definition, $z$ is a central element such that
$H({\mathfrak g})/(z-1) \cong U({\mathfrak g})$ and that
$H({\mathfrak g})/(z) \cong k[{\mathfrak g}]$.

\begin{lemma}
\label{xxlem6.5}
Let ${\mathfrak g}$ be a finite dimensional Lie algebra
with no 1-dimensional Lie ideal, and let $H=H({\mathfrak g})$.
Then:
\begin{enumerate}
\item
Up to a scalar, $z$ is the only nonzero normal element of $H$ in
degree 1.
\item
Up to a scalar, $z$ is the only normal element in $H-k$ such that
$H/(z)\cong k[{\mathfrak g}]$.
\item
$H\not\cong C[b;\sigma]$ as graded rings.
\item
$H$ does not have any quasi-reflection of finite order.
\item
Suppose that ${\mathfrak g}'$ is another Lie algebra with no
1-dimensional Lie ideal. If $H\cong H({\mathfrak g}')$ as
ungraded algebras, then ${\mathfrak g}\cong {\mathfrak g}'$
as Lie algebras.
\end{enumerate}
\end{lemma}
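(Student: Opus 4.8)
The plan is to prove (a)--(e) in order, since each feeds the next; the external inputs are the classification of quasi-reflections of quantum polynomial rings (Theorem \ref{xxthm3.1}) and the structural lemmas of Sections \ref{sec4} and \ref{sec5}. Note first that the hypothesis forbids $\dim\mathfrak{g}=1$ and that the case $\mathfrak{g}=0$ (where $H\cong k[z]$) is degenerate, so effectively $\dim\mathfrak{g}\geq 2$; moreover $\mathfrak{g}$ is then nonabelian, since an abelian Lie algebra of positive dimension has $1$-dimensional ideals. For (a), let $b=\alpha z+v\in H_1$ be normal with $v\in\mathfrak{g}$. Normality gives a linear map $\phi\colon H_1\to H_1$ with $bw=\phi(w)b$ for all $w\in H_1$, and $\phi(z)=z$ since $z$ is central and $H$ is a domain. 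Reducing $bw=\phi(w)b$ modulo $z$ in the polynomial ring $H/(z)=k[\mathfrak{g}]$ forces $\phi(w)=w+\gamma_w z$ for $w\in\mathfrak{g}$ (assuming $v\neq0$); substituting back, using $\lfloor b_i,b_j\rfloor=[b_i,b_j]z$ and cancelling the nonzerodivisor $z$, gives $[v,w]=\alpha\gamma_w z+\gamma_w v$ in $H_1$. Since the Lie bracket has no $z$-component, $\alpha\gamma_w=0$ and $[\mathfrak{g},v]\subseteq kv$, so $kv$ is a $1$-dimensional Lie ideal -- a contradiction unless $v=0$. Thus $b\in kz$, proving (a), and then (b) follows: any normal $w\in H\setminus k$ with $H/(w)\cong k[\mathfrak{g}]$ must, by comparing the Hilbert series of these regular domains, be a normal element of degree $1$, hence lies in $kz$ by (a).

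For (c): if $H\cong C[b;\sigma]$ with $\deg b=1$, then $b$ is a normal degree-$1$ element, so $b\in kz$ by (a); hence $C\cong H/(b)=k[\mathfrak{g}]$, and centrality of $z$ forces $\sigma=\id$, so $H\cong k[\mathfrak{g}][z]$ is commutative, contradicting that $\mathfrak{g}$ is nonabelian. Part (d) is then a case analysis via Theorem \ref{xxthm3.1}: a finite-order quasi-reflection $g$ of $H$ is either a reflection of $H_1$ or a mystic reflection of order $4$. If $g$ is a reflection of order $\neq2$, Lemma \ref{xxlem5.1}(b) gives $H\cong C[b;\sigma]$, contradicting (c). If $g$ is a reflection of order $2$, Lemma \ref{xxlem5.2}(d) makes the $(-1)$-eigenvector $b_1\in H_1$ normal, so $b_1\in kz$ by (a) and $g(z)=-z$; in the $\mathbb{Z}/2$-grading of $H$ by $g$-eigenvalue, $z$ is odd while the $g$-fixed complement $W$ of $kz$ in $H_1$ is even and maps isomorphically onto $\mathfrak{g}$, and reading the relations $\lfloor b_i,b_j\rfloor=[b_i,b_j]z$ in a basis of $W$ shows parity forces every structure constant of $\mathfrak{g}$ to vanish -- so $\mathfrak{g}$ is abelian, impossible. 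If $g$ is a mystic reflection, Proposition \ref{xxprop4.3} produces $b_1\in H_1$ with $b_1^2$ normal; since $b_1$ and $b_2$ generate a copy of $k_{-1}[x,y]$ in $H$, $b_1$ cannot be central, so $b_1=\alpha z+v$ with $v\neq0$; passing to $U(\mathfrak{g})=H/(z-1)$ makes $(\alpha+\bar v)^2$ normal in $U(\mathfrak{g})$, and a leading-term computation in $\gr U(\mathfrak{g})=S(\mathfrak{g})$ forces $[\mathfrak{g},v]\subseteq kv$, again a $1$-dimensional Lie ideal. Hence $H$ has no finite-order quasi-reflection.

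Finally, for (e): given an ungraded isomorphism $\Phi\colon H(\mathfrak{g})\to H(\mathfrak{g}')$, comparing $\GKdim$ gives $\dim\mathfrak{g}=\dim\mathfrak{g}'=:n$, so $k[\mathfrak{g}]$ and $k[\mathfrak{g}']$ are isomorphic polynomial rings. The element $\Phi(z)$ is central in $H(\mathfrak{g}')$ and $H(\mathfrak{g}')/(\Phi(z))\cong H(\mathfrak{g})/(z)=k[\mathfrak{g}]$ is a polynomial ring, so $\Phi(z)=\lambda z'$ by (b); after rescaling, $\Phi(z)=z'$. Then $\Phi$ carries the central nonzerodivisor $z$ to $z'$, hence induces isomorphisms of the whole Rees family: on a generic fibre, $U(\mathfrak{g})=H(\mathfrak{g})/(z-1)\cong H(\mathfrak{g}')/(z'-1)=U(\mathfrak{g}')$; on the special fibre, an isomorphism $k[\mathfrak{g}]\cong k[\mathfrak{g}']$ intertwining the Poisson brackets $\{\bar f,\bar g\}=\overline{z^{-1}\lfloor\tilde f,\tilde g\rfloor}$. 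From this linear Poisson algebra one recovers $\mathfrak{g}$ as its space of linear functions with the induced bracket, so $\mathfrak{g}\cong\mathfrak{g}'$. I expect the main obstacle to be exactly this last step: after the ungraded identification $\Phi(z)=z'$ one has no a priori control of the grading, so one must argue that the standard filtration of $U(\mathfrak{g})$ -- equivalently, the grading on $k[\mathfrak{g}]$ singled out by the Poisson bracket -- is intrinsic, and it is here, together with parts (a)--(b), that the no-$1$-dimensional-ideal hypothesis does its real work; the other point requiring care is the exact bookkeeping of the $z$-components of the relations in (a) and in the mystic-reflection case of (d).
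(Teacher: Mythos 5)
Parts (a), (c), and the reflection cases of (d) match the paper's argument, and your treatment of the mystic-reflection case of (d) --- passing to $U({\mathfrak g})=H/(z-1)$ and reading off $[{\mathfrak g},v]\subseteq kv$ from symbols in $S({\mathfrak g})$ --- is a valid alternative to the paper's direct computation (which instead writes $c_1=b+z$, $c_2=b+\tau z$ and compares $c_1^2$ with $c_2^2$ in $H$). But there are two genuine gaps.

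First, in (b) your ``comparing the Hilbert series of these regular domains'' presupposes that $w$ is homogeneous and that the isomorphism $H/(w)\cong k[{\mathfrak g}]$ is graded. Neither is given: the whole point of (b), and the reason (e) needs it, is to handle the image $\Phi(z)$ of $z$ under an \emph{ungraded} isomorphism, which a priori is an inhomogeneous normal element. The paper's proof instead observes that commutativity of $H/(w)$ gives $\lfloor H,H\rfloor\subseteq wH$, hence $b_1z=c_1w$ and $b_2z=c_2w$ for two linearly independent elements $b_1,b_2\in[{\mathfrak g},{\mathfrak g}]$ (this is where the no-$1$-dimensional-ideal hypothesis is used, via $\dim[{\mathfrak g},{\mathfrak g}]\geq 2$); a degree count in the graded domain $H$ then forces $w$ to be homogeneous of degree $1$, after which (a) applies. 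Without this, your deduction $\Phi(z)=\lambda z'$ in (e) is unsupported.

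Second, (e) is not proved: you explicitly stop at the step you identify as the main obstacle, namely that an ungraded isomorphism gives no control of the grading, and you only assert that the grading ``singled out by the Poisson bracket is intrinsic'' without an argument. The paper closes exactly this gap, and does so without any intrinsicness claim: after normalizing $f(z)=z$, set $\sigma(b_j)=f(b_j)-\xi_j$ where $\xi_j$ is the constant term. The containment $\lfloor H',H'\rfloor\subseteq z{\mathfrak g}'H'$ shows $f([b_j,b_\ell])$ has no constant term, so $\sigma$ still satisfies the defining relations and is an algebra isomorphism with $\sigma(H_{\geq 1})=H'_{\geq 1}$; hence $\sigma$ is a strict filtered isomorphism whose associated graded map is a \emph{graded} isomorphism $\tau:H\to H'$ fixing $z$. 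Writing $\tau(b)=\phi(b)+\chi(b)z$ on degree $1$ and comparing $\tau(\lfloor b_j,b_\ell\rfloor)$ with $\lfloor\tau(b_j),\tau(b_\ell)\rfloor$ then shows $\phi$ is a Lie algebra isomorphism. You should supply an argument of this kind (or prove your intrinsicness claim) before (e) can be considered established.
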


\begin{proof} (a) Suppose there is another normal
element in degree 1. We may write it as $b+\xi z$
for some $0\neq b\in {\mathfrak g}$ and some
$\xi\in k$. Since $b+\xi z$ is normal, for every
$0\neq x\in {\mathfrak g}$, there are elements $y\in
{\mathfrak g}$ and $\xi'\in k$ such that
\begin{equation}
\label{6.5.1}
x(b+\xi z)=(b+\xi z)(y+\xi' z).
\tag{6.5.1}
\end{equation}
Modulo $z$ we have $xb=by$ in $k[{\mathfrak g}]$, and hence
$y=x$. Thus \eqref{6.5.1} implies that
$$(b+\xi z)\xi' z=\lfloor x,b+\xi z\rfloor =\lfloor x,b\rfloor =[x,b]z.$$
This implies that $[x,b]=\xi' b$. Since $x$ is
arbitrary, $kb$ is a 1-dimensional Lie ideal.
This yields a contradiction.

(b) Let $w\in H-k$ be another normal element in $H$ such that
$H/(w)\cong k[{\mathfrak g}]$. Then $\lfloor H,H\rfloor \subset
wH=Hw$. Consequently,
$$[{\mathfrak g},{\mathfrak g}]z=\lfloor {\mathfrak g},
{\mathfrak g}\rfloor \subset wH.$$
Since ${\mathfrak g}$ has no 1-dimensional Lie
ideal, the Lie ideal $[{\mathfrak g},{\mathfrak g}]$
must have dimension at least $2$. Pick two linearly independent
elements $b_1,b_2\in [{\mathfrak g},{\mathfrak g}]$, we have
$b_1 z, b_2 z\in wH$. Write $b_1z=c_1 w$ and $b_2 z=c_2 w$.
Since $H$ is a domain, $\deg c_i+\deg w=\deg b_1 z=2$.
Since $b_1$ and $b_2$ are linearly independent, the
degree of $w$ cannot be $2$. Hence $\deg w=1$. A simple
calculation shows that $w=z$ up to a scalar.

(c) By (a), $z$ is the only normal element in degree 1.
If $H\cong C[b;\sigma]$, then $b$ must be $z$ and $\sigma
=Id_C$. In this
case $C=H/(b)=H/(z)$, which is isomorphic to the commutative
polynomial ring. Therefore $H\cong C[b;\sigma]$ is commutative,
a contradiction.

(d) Suppose $g$ is a quasi-reflection. If $g$ is a
reflection of order larger than $2$, by Lemma
\ref{xxlem5.1}(b), $H\cong C[b;\sigma]$. This is
impossible by (c).

If $g$ is a reflection of order $2$, by the proof of
Lemma \ref{xxlem5.2}(d), there is a basis of $H_1$,
$\{b,c_1,\cdots,c_n\}$, so that $b$ is a normal element
of $H$ and $g(b)=-b$ and $g(c_i)=c_i$ for all $i$. By (a),
$z$ is the only normal element in degree 1.
Hence
$$b=z,\quad c_i=b_i+\xi_i z$$
for a basis $\{b_i\}$ of ${\mathfrak g}$ and for
some $\xi_i\in k$. Now we compute $g(\lfloor c_i,c_j\rfloor )$
in two ways:
$$g(\lfloor c_i,c_j\rfloor )=\lfloor g(c_i),g(c_j)\rfloor =
\lfloor c_i,c_j\rfloor =\lfloor b_i+\xi_i z,b_j+\xi_j z\rfloor =
\lfloor b_i,b_j\rfloor =[b_i,b_j]z$$
and
$$g(\lfloor c_i,c_j\rfloor )=g([b_i,b_j]z)=g([b_i,b_j])g(z)
=([b_i,b_j]+\xi z)(-z)$$
for some $\xi\in k$. The only possible solution is $\xi=0$ and
$[b_i,b_j]=0$. But we can choose $i,j$
such that $[b_i,b_j]\neq 0$, which yields a
contradiction.

Finally if $g$ is a mystic reflection (of order $4$),
there there are two linearly independent
elements $c_1$ and $c_2$ in $H_1$ such that
$c_1^2=c_2^2$ [Proposition \ref{xxprop4.3}(c)].
Since $H/(z)$ is a commutative polynomial ring,
$c_1=\pm c_2$ in $H/(z)$. Up to a scalar, we may
assume $c_1=b+z$ and $c_2=b+\tau z$ where $b$ is
a nonzero element in ${\mathfrak g}$ and
$1\neq \tau\in k$. In this form, one can easily check
that $c_1^2\neq c_2^2$ in $H$. Therefore $H$ has
no mystic reflection.

(e) Let $H'=H({\mathfrak g}')$. Let $f: H\to H'$ be
an isomorphism of (ungraded) algebras. By (b),
$f(z)=\xi z$ for some nonzero scalar $\xi$. There
is an automorphism of the graded algebra $H'$ sending
$\xi z$ to $z$. So we can assume that $f(z)=z$.

Let $\{b_1,\cdots,b_n\}$ be a basis of ${\mathfrak g}$.
For every $j$, write $f(b_j)=\xi_j+\sigma(b_j)$ where
$\xi_j\in k$ and $\sigma(b_j)\in H'_{\geq 1}$. We claim
that $z\mapsto z:=\sigma(z), b_j\mapsto \sigma(b_j)$
defines an isomorphism from $H$ to $H'$. First we show
that $\sigma$ defines an algebra homomorphism, namely,
$\sigma$ preserves the defining relations. Recall that
the defining relations of $H$ are
$$\lfloor b_j,z\rfloor =0 \quad \text{and}\quad
\lfloor b_j,b_f\rfloor=[b_j,b_f] z.$$
Since $\sigma(z)=z$ is central in $H'$, we have
$\lfloor \sigma(b_j),z\rfloor =0$, namely, $\sigma$
preserves the first set of relations. Applying $f$
to the second set of relations, we have
$$\lfloor f(b_j),f(b_f)\rfloor =f([b_j,b_f])f(z)=
f([b_j,b_f])z.$$
Since $\lfloor H',H'\rfloor\subset z{\mathfrak g}' H'$,
$f([b_j,b_f])\in {\mathfrak g}' H'$. Hence
$\sigma([b_j,b_f])=f([b_j,b_f])$ after extending
$\sigma$ linearly. Now
$$\lfloor \sigma(b_j),\sigma(b_f)\rfloor
=\lfloor f(b_j)-\xi_j,f(b_f)-\xi_f\rfloor
\qquad\qquad\qquad\qquad$$
$$\qquad\qquad\qquad\qquad
= \lfloor f(b_j),f(b_f)\rfloor
=f([b_j,b_f])z=\sigma([b_j,b_f])\sigma(z).
$$
Therefore $\sigma$ preserves the second set of the defining
relations. Thus we have proved that $\sigma$ is an algebra
homomorphism. Since $\{b_1,\cdots,b_n,z\}$ generates $H$ and
$f$ is an isomorphism, then $\{f(b_1),\cdots, f(b_n),z\}$
generates $H'$. Hence $\{\sigma(b_1),\cdots, \sigma(b_n),z\}$
generates $H'$ also, and we have shown that $\sigma$
is an algebra isomorphism from $H$ to $H'$.

Note that $\sigma(H_{\geq 1})\subset H'_{\geq 1}$.
Since $\sigma$ is an isomorphism, $\sigma(H_{\geq 1})=
H'_{\geq 1}$. Since $H$ is generated in degree 1,
it has a natural filtration
$$\{F_{-j}H:=(H_{\geq 1})^j=H_{\geq j}\;|\; j
\in {\mathbb Z}\}.$$
The same is true for $H'$. Thus $\sigma$ is a
filtered isomorphism that induces a graded algebra
isomorphism $\tau:=\gr \sigma: \gr H\to \gr H'$. Since
$\gr H=H$, $\tau$ is a graded isomorphism from
$H$ to $H'$ sending $z$ to $z$.

For every $b\in {\mathfrak g}$ write
$\tau(b)=\phi(b)+\chi(b) z$ where $\phi(b)\in
{\mathfrak g}'$ and $\chi$ is a linear map
from ${\mathfrak g}$ to $k$. We claim that
$\phi:{\mathfrak g}\to {\mathfrak g}'$ is a
Lie algebra isomorphism. Since $\tau(z)=z$,
$\phi$ is an isomorphism of $k$-vector spaces.
To show $\phi$ preserves the Lie product, we use
the following direct computation:
$$
\begin{aligned}
\phi([b_j,b_f])z &= \tau([b_j,b_f])z-\chi([b_j,b_f])z^2=
\tau([b_j,b_f]z)-\xi z^2\\
&=\tau(\lfloor b_j,b_f\rfloor )-\xi z^2=
\lfloor \tau(b_j),\tau(b_f)\rfloor-\xi z^2\\
&=\lfloor \phi(b_j)+\chi(b_j)z,\phi(b_f)+\chi(b_f)z\rfloor-\xi z^2\\
&=\lfloor \phi(b_j),\phi(b_f)\rfloor-\xi z^2
=[\phi(b_j),\phi(b_f)]z-\xi z^2.\\
\end{aligned}
$$
Thus $\phi([b_j,b_f])=[\phi(b_j),\phi(b_f)]$ and
$\xi:=\chi([b_j,b_f])=0$. Therefore $\phi$ is
a Lie algebra isomorphism from ${\mathfrak g}$
to ${\mathfrak g}'$.
\end{proof}

\begin{proof}[Proof of Theorem \ref{xxthm0.2} (a)]
By Lemma \ref{xxlem6.5}(d), $H:=H({\mathfrak g})$
does not have any quasi-reflections of
finite order. By Theorem \ref{xxthm2.4},
for any finite group $G\subset \Aut(H)$,
$H^G$ does not have finite global
dimension. Thus $H^G\cong H({\mathfrak g})$
implies that $G$ is trivial. Since $G=\{1\}$ then
$H({\mathfrak g})\cong H({\mathfrak g}')$, which
implies ${\mathfrak g}\cong {\mathfrak g}'$
by Lemma \ref{xxlem6.5}(e).
\end{proof}

\begin{example}
\label{xxex6.6}
This example shows that the condition about
non-existence of 1-dimensional Lie ideals
in Theorem \ref{xxthm0.2} is necessary.

Let ${\mathfrak g}$ be the 2-dimensional
solvable Lie algebra $kx+ky$ with $[x,y]=y$.
Then $ky$ is a 1-dimensional Lie ideal. The
homogenization $H({\mathfrak g})$ of $U({\mathfrak g})$
is generated by $x,y,z$ subject to the following
relations
$$xy-yx=yz, \quad zx=xz, zy=yz.$$
It is easy to see that $H({\mathfrak g})$ is isomorphic
to an Ore extension $k[x,z][y;\sigma]$ where $\sigma(x)=x+z$
and $\sigma(z)=z$.
Let $g$ be an automorphism of $H({\mathfrak g})$ determined
by
$$g(x)=x, g(z)=z, \quad \text{and}\quad g(y)=-y.$$
It is easy to see that $g$ is a reflection of
$H({\mathfrak g})$.
The fixed subring of $H({\mathfrak g})$ is
isomorphic to $k[x,z][y^2;\sigma^2]$.
There is an isomorphism $\phi:H({\mathfrak g})
\to  H({\mathfrak g})^g$ defined by
$$\phi: x\mapsto x,\quad y\mapsto y^2,\quad
z\mapsto 2z.$$
\end{example}

Finally let us consider the proof of Theorem \ref{xxthm0.2}(b).
Let $A$ be the Rees ring of the Weyl algebra $A_n(k)$ with
respect to the standard filtration; then $A$ is
the algebra with generating set $\{x_i,y_i,z: i=1, 2, \ldots ,
n\}$ subject to the relations $x_iy_i-y_ix_i=z^2$ for $i=1,2,
\ldots , n$, and with all other generators commuting.  The algebra $A$
is a regular domain of dimension $2n+1$ \cite[3.6]{Le}
with Hilbert series $H_A(t) = 1/{(1-t)^{2n+1}}.$  We first find
the reflection groups of $A$.

\begin{proposition}
\label{xxprop6.7}
Let $A$ be the Rees ring of the Weyl algebra $A_n(k)$.
\begin{enumerate}
\item[(a)] If $g$ is a quasi-reflection of $A$, then
$g$ is a reflection of the form $g(x_i) = x_i + a_iz,
g(y_i) = y_i+b_iz, \text{ and } g(z) = -z$
for elements $a_i, b_i \in k$.
\item[(b)] If $G$ is a finite group of graded
automorphisms of $A$ such that $A^G$ is regular, then $G =
\{Id, g\}$ for a reflection $g$.
\end{enumerate}
\end{proposition}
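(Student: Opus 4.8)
The plan is to prove (a) first and read off (b) from it using Theorems~\ref{xxthm2.4} and~\ref{xxthm2.5}. Everything hinges on one structural fact, which I would establish at the outset: up to a nonzero scalar, $z$ is the only normal element of $A$ of degree $1$. For $w=\sum_i(a_ix_i+b_iy_i)+cz\in A_1$, the relations (each $x_j$, $y_j$ commuting with all generators but each other, with $x_jy_j-y_jx_j=z^2$) give $x_jw-wx_j=b_jz^2$ and $y_jw-wy_j=-a_jz^2$. If $w$ is normal then $b_jz^2\in wA$ for each $j$, hence for degree reasons $b_jz^2=wa'$ with $a'\in A_1$; passing to the polynomial ring $A/(z)\cong k[x_1,\dots,x_n,y_1,\dots,y_n]$ and using that $A$ is a domain forces $w\in kz$, so all $a_j=b_j=0$, and symmetrically for the $y_j$'s.

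Proof of (a): let $g$ be a quasi-reflection of finite order (the only case needed below). By Theorem~\ref{xxthm3.1}, either $g|_{A_1}$ is a reflection or $g$ is a mystic reflection of order $4$. I would eliminate the mystic case with Proposition~\ref{xxprop4.3}(c): it would produce linearly independent $b_1,b_2\in A_1$ with $b_1^2=b_2^2$ after rescaling $b_2$; reducing modulo $z$ gives $\bar b_1=\pm\bar b_2$ in the polynomial ring $A/(z)$, hence $b_1\mp b_2=\delta z$, and substituting into $b_1^2-b_2^2=0$ (with $z$ central and $A$ a domain) forces $b_1,b_2\in kz$, contradicting their independence. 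So $g|_{A_1}$ is a reflection: $g(b_1)=\xi b_1$ with $\xi\neq1$ and $g(b_j)=b_j$ for $j\geq2$. By Lemma~\ref{xxlem5.1}(b) when $\xi\neq-1$, and by Lemma~\ref{xxlem5.2}(d) when $\xi=-1$, the vector $b_1$ is normal of degree $1$, so by the structural fact $b_1\in kz$. Then $g(z)=\xi z$ and, since $\{b_2,\dots,b_{2n+1}\}$ spans $A_1$ modulo $kz$, $g$ induces the identity on $A/(z)=k[x_i,y_i]$; thus $g(x_i)=x_i+a_iz$ and $g(y_i)=y_i+b_iz$ for scalars $a_i,b_i$. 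Applying $g$ to $x_iy_i-y_ix_i=z^2$ and using centrality of $z$ gives $z^2=\xi^2z^2$, so $\xi^2=1$ and therefore $\xi=-1$, which is the asserted form (and $g^2=\mathrm{id}$). Conversely, every map of this shape is an automorphism, and a short trace computation with the PBW basis $\{x_1^{\alpha_1}\cdots y_n^{\beta_n}z^c\}$ (on which $g$ acts triangularly with diagonal contribution $(-1)^c$) yields $Tr_A(g,t)=((1-t)^{2n}(1+t))^{-1}$, so it is a quasi-reflection.

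Proof of (b): since $A^G$ is regular it has finite global dimension, so Theorem~\ref{xxthm2.4} gives a quasi-reflection $g\in G$, of the form in (a), with $g^2=\mathrm{id}$. I would next show $G$ has exactly one quasi-reflection: if $g_1,g_2\in G$ are quasi-reflections, the formulas from (a) show $g_1g_2$ fixes $z$ and sends $x_i\mapsto x_i+c_iz$, $y_i\mapsto y_i+d_iz$; these ``translations'' form a subgroup of $\Aut(A)$ isomorphic to $(k^{2n},+)$, which is torsion-free, so the finite subgroup $G$ meets it only in the identity, hence $g_1g_2=\mathrm{id}$ and $g_1=g_2$. Therefore the number $r$ of quasi-reflections of $G$ is $1$. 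Writing $H_{A^G}(t)=((1-t)^{2n+1}q(t))^{-1}$, Theorem~\ref{xxthm2.5}(b) gives $q(1)=|G|$ and $\deg q=r=1$; by Lemma~\ref{xxlem1.6}(b) the roots of the Euler polynomial $(1-t)^{2n+1}q(t)$ of $A^G$ are roots of unity and $q(0)=1$, so $q(t)=1-\zeta t$ for a root of unity $\zeta$, and $q(1)=1-\zeta=|G|$ being a positive integer forces $\zeta=-1$. Hence $|G|=2$ and $G=\{\mathrm{id},g\}$.

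The main obstacle is the structural fact of the first paragraph and leveraging it to collapse every quasi-reflection to the single translational form. The order-$2$ reflection is genuinely present here (in contrast to Theorem~\ref{xxthm6.2}), so that subcase must be routed through Lemma~\ref{xxlem5.2}(d) rather than Lemma~\ref{xxlem5.1}, and one must verify that the maps of the stated shape are honest automorphisms; beyond that, the torsion-freeness of the translation group and the cyclotomic constraint from Theorem~\ref{xxthm2.5} close (b) with no real difficulty.
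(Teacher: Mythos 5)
Your proof is correct, and for part (b) it coincides with the paper's argument: exactly one quasi-reflection because the product of two distinct ones is a nontrivial ``translation'' lying in a torsion-free subgroup (the paper phrases this as the product matrix having infinite order), and then $\deg q=1$, $q(1)=|G|=2$ via Theorem~\ref{xxthm2.5}(b). For part (a) you take a genuinely different route. The paper splits on whether $g(z)=\lambda z$ with $\lambda\neq 1$ or $\lambda=1$: in the first case it applies Lemma~\ref{xxlem2.6} to $A/(z)$, notes that $(1-\lambda t)Tr_A(g,t)$ is the reciprocal of a polynomial only if $\lambda=\xi$, concludes $Tr_{A/(z)}(\bar g,t)=H_{A/(z)}(t)$ and hence $\bar g=\mathrm{id}$, and reads off the stated form; in the second case it rules out reflections and mystic reflections by \emph{asserting} that multiples of $z$ are the only degree-one normal elements and the only degree-one elements squaring to normal elements. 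You instead split on reflection versus mystic via Theorem~\ref{xxthm3.1}, eliminate the mystic case by reducing $b_1^2=b_2^2$ modulo the central $z$ and using that $A$ is a domain, and then use Lemma~\ref{xxlem5.1}(b) or Lemma~\ref{xxlem5.2}(d) to force the non-fixed eigenvector into $kz$, after which $\bar g=\mathrm{id}$ on $A/(z)$ is automatic and the relation $x_iy_i-y_ix_i=z^2$ yields $\xi=-1$. Your version is more self-contained, since the commutator computation $x_jw-wx_j=b_jz^2$ actually proves the uniqueness-of-normal-element claim that the paper leaves unproved, and your direct elimination of $b_1^2=b_2^2$ replaces the paper's second unproved assertion; the trade-off is that you lean on the full classification of Theorem~\ref{xxthm3.1} and hence on $g$ having finite order (which, as you note, is all that is needed for (b), though the literal statement of (a) does not impose it), whereas the paper's trace argument handles the $g(z)\neq z$ case without invoking that classification.
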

\begin{proof} (a) Let $g$ be a quasi-reflection of $A$.
Then $$ Tr(g,t) = \frac{1}{(1-t)^{2n}(1-\xi t)}$$ for some scalar
$\xi$. Since $z$ is the only central element of degree $1$, we
must have that $g(z) = \lambda z$ for some scalar $\lambda$.

Suppose that $\lambda \neq 1$.  Since $\langle z \rangle$ is
$g$-invariant, $g$ induces an automorphism $\bar{g}$ of $\bar{A} =
A/\langle z \rangle.$ Since $\ds Tr_{\bar{A}}(\bar{g},t) =
(1-\lambda t)Tr_A(g,t) = (e_{\bar{g}}(t))^{-1}$, we have that
$Tr_{\bar{A}}(\bar{g},t) = {(1-t)^{-2n}}$, and
$\bar{g}$ must be the identity on $\bar{A} = k[x_i, y_i, i=1, 2,
\ldots , n]$.  Then $g(x_i) = x_i + a_iz, g(y_i) = y_i+b_iz,
\text{ and } g(z) =  \lambda z$.
In order that the relations of $A$ are preserved by $g$ it
follows that $\lambda^2=1$, so that $g$ has the form stated.

Now suppose that $g(z) =z$.  If $g$ is a reflection, then by
Lemma \ref{xxlem5.1} (b) and the proof of Lemma \ref{xxlem5.2}(d)
there is a basis $\{b_1, b_2, \ldots ,
b_{2n+1}\}$ of $A_1$ such that $g(b_1) = \xi b_1$, $g(b_i) =
b_i$ for $i \geq 2$, and $b_1$ is a normal element of $A$.  Since
there are no normal elements in $A_1$ other than multiples of $z$,
this cannot be, and hence there are no reflections with $g(z)=z$.

Now suppose that $g$ is a mystic reflection with $g(z) = z$. Then
by Proposition 4.2 there is a basis $\{b_1, b_2, \ldots ,
b_{2n+1}\}$ of $A_1$ such that $g(b_1) = i b_1, g(b_2) = -i b_2,$
$g(b_i) = b_i$ for $i \geq 3$, and $b_1^2$ is a normal
element of $A$.  Since multiples of $z$ are the only elements of $A_1$
which square to normal elements, we have shown that there are no mystic
reflections of $A$.  Hence (a) follows.

(b)  Suppose that $G$ is a finite group of graded automorphisms of
$A$ such that $A^G$ is regular.  Then $G$ must contain a
quasi-reflection $g_1$ by Theorem \ref{xxthm2.4}.  Suppose that $G$ contains
another quasi-reflection $g_2 \neq g_1$.  By (a) these quasi-reflections
are reflections that can be represented on $A_1$ by
matrices
\[ M_{g_1} = \left[\begin{array}{cc} I & \bar{0} \\
\bar{v} & -1 \end{array}\right],
M_{g_2} = \left[\begin{array}{cc} I & \bar{0} \\
\bar{u} & -1
\end{array}\right] \]
where $I$ is a $2n \times 2n$ identity matrix and
$\bar{u} \neq \bar{v}$.  Then $g_1 g_2$ is represented
by the product matrix \[ \left[\begin{array}{cc} I & \bar{0} \\
\bar{v} - \bar{u} & +1 \end{array}\right],\]
which has infinite
order.  Hence $G$ can contain exactly one quasi-reflection. Since
$A^G$ is regular, its Hilbert series has the form $$ H_{A^G}(t)
= \frac{1}{(1-t)^{2n+1}q(t)}$$ where $q(t)$ is a product of
cyclotomic polynomials. By Theorem \ref{xxthm2.5} (b), $\deg q(t)$ is the
number of quasi-reflections in $G$, and hence must be $1$.
Consequently $q(t) = 1+t$.  Also by Theorem \ref{xxthm2.4}(b)
$q(1) = 2 =|G|$.  Thus $G = \{Id,g\}$ for a reflection $g$.

Note that $A^g$ is regular by Theorem \ref{xxthm5.3} (a).
\end{proof}

\begin{corollary}
\label{xxcor6.8}
Let $A$ be the Rees ring of the Weyl algebra $A_n(k)$.  Then $A$ is
not isomorphic (as an ungraded algebra) to $A^G$  for any
finite group of graded automorphisms.
\end{corollary}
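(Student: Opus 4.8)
The plan is to combine Proposition~\ref{xxprop6.7} with an abelianization invariant. Suppose, for contradiction, that $G$ is a nontrivial finite group of graded automorphisms of $A$ with $A\cong A^{G}$ as ungraded algebras. Then $A^{G}$ has finite global dimension (equal to that of $A$), so $A^{G}$ is regular by Lemma~\ref{xxlem1.10}(c) --- using that $A^{G}$ is a cofinite subring of $A$ with $A=A^{G}\oplus C$ as $A^{G}$-bimodule [Lemma~\ref{xxlem1.11}(a,b)]. By Proposition~\ref{xxprop6.7}(b) we then have $G=\{Id,g\}$, where $g(x_{i})=x_{i}+a_{i}z$, $g(y_{i})=y_{i}+b_{i}z$ and $g(z)=-z$ for some scalars $a_{i},b_{i}\in k$. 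So it is enough to show $A^{g}\not\cong A$ as ungraded algebras; note that although $A$ and $A^{g}$ already have different Hilbert series, Hilbert series is only a graded invariant, so to preclude an ungraded isomorphism a grading-free invariant is needed. I would use the maximal commutative quotient $R^{\mathrm{ab}}:=R/[[R,R]]$, where $[[R,R]]$ is the two-sided ideal generated by all commutators.

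For $A$ itself the only nonzero commutators among the generators are the relations $[x_{i},y_{i}]=z^{2}$, so $[[A,A]]=(z^{2})$ and $A^{\mathrm{ab}}=A/(z^{2})\cong k[x_{1},\dots,x_{n},y_{1},\dots,y_{n},z]/(z^{2})$; this ring contains the nonzero nilpotent $\bar z$ and so is not reduced. The heart of the matter is to show, by contrast, that $(A^{g})^{\mathrm{ab}}$ is a commutative polynomial ring, hence reduced.

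To compute $(A^{g})^{\mathrm{ab}}$ I would argue as follows. The central ideal $(z)=zA$ is $g$-stable, and $g$ induces the identity on $A/(z)\cong k[x_{i},y_{i}]$, since $\overline{g(x_{i})}=\bar x_{i}$ and $\overline{g(y_{i})}=\bar y_{i}$; thus $g(a)-a\in(z)$ for every $a\in A$. In particular, if $a$ lies in the $(-1)$-eigenspace $A^{-}$ of $g$, then $2a=a-g(a)\in zA$, and writing $a=zb$ and using that $z$ is central and a nonzerodivisor in the domain $A$ forces $g(b)=b$; hence $A^{-}=zA^{g}$, and therefore $A^{g}\cap(z)=zA^{-}=z^{2}A^{g}$. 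Averaging by $\tfrac12(1+g)$ shows the natural inclusion $A^{g}/(A^{g}\cap(z))\hookrightarrow(A/(z))^{g}=k[x_{i},y_{i}]$ is onto, so $A^{g}/(z^{2}A^{g})\cong k[x_{i},y_{i}]$. Finally $x_{i}':=x_{i}+g(x_{i})$ and $y_{i}':=y_{i}+g(y_{i})$ lie in $A^{g}$, and a direct computation gives $[x_{i}',y_{i}']=4z^{2}$, so $z^{2}\in[[A^{g},A^{g}]]$; since $A^{g}/(z^{2}A^{g})$ is already commutative, $[[A^{g},A^{g}]]=z^{2}A^{g}$ and $(A^{g})^{\mathrm{ab}}\cong k[x_{i},y_{i}]$ is a domain, in particular reduced. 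Together with the previous paragraph this gives $A^{g}\not\cong A$, the required contradiction.

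The one genuinely substantive step is the identification $A^{-}=zA^{g}$, which forces $A^{g}\cap(z)=z^{2}A^{g}$ rather than the naive $zA^{g}$ --- and this is precisely what makes $(A^{g})^{\mathrm{ab}}$ reduced. (Conceptually, $A^{g}$ is the universal enveloping algebra of a $(2n+1)$-dimensional Heisenberg Lie algebra, whose central commutator, unlike $z^{2}$ in $A$, is not a square in the abelianization; this is the underlying reason the two algebras differ.) Everything else is routine: the reduction to a single reflection comes directly from Proposition~\ref{xxprop6.7}, and the description of $A^{\mathrm{ab}}$ uses only the given presentation of $A$. As usual one assumes $n\ge 1$, the case $n=0$ (where $A\cong k[z]$) being commutative and outside the intended scope.
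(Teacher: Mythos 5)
Your proposal is correct, and it shares the paper's first reduction (via Lemma \ref{xxlem1.10}(c)/Lemma \ref{xxlem1.11} and Proposition \ref{xxprop6.7}(b), one only needs to rule out $A\cong A^g$ for the single explicit reflection $g$), but the final, grading-free invariant you use is genuinely different. The paper invokes the minimal number of algebra generators: it exhibits $A^g$ as generated by $X_i=x_i+\tfrac{a_i}{2}z$, $Y_i=y_i+\tfrac{b_i}{2}z$ and $z^2$ with $z^2=X_iY_i-Y_iX_i$, so $A^g$ needs only $2n$ generators while $A$, being a quantum polynomial ring with $\dim A_1=2n+1$, needs $2n+1$. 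You instead use the abelianization: $A^{\mathrm{ab}}\cong k[x_i,y_i,z]/(z^2)$ is non-reduced, whereas $(A^g)^{\mathrm{ab}}\cong k[x_i,y_i]$ is a domain. Both arguments ultimately exploit the same structural fact --- that in $A^g$ the central element $z^2$ is itself a commutator of the remaining generators, while in $A$ it is the square of a generator --- but your route has the advantage of not requiring the full presentation of $A^g$ that the paper leaves as ``a computation shows'': your identities $A^{-}=zA^{g}$ and $A^{g}\cap zA=z^{2}A^{g}$, proved directly from $g$ acting as the identity on $A/(z)$ and $g(z)=-z$ in the domain $A$, give exactly the needed description of $A^{g}/z^{2}A^{g}$, and the commutator $[x_i',y_i']=4z^2$ finishes the identification of the commutator ideal. (The generator-count invariant the paper uses is also a legitimate ungraded invariant, since any generating set of a connected graded algebra must surject onto $\mathfrak{m}/\mathfrak{m}^2$, but your invariant is arguably more robust and your verification is more self-contained.) Your aside on the trivial case $n=0$ is a reasonable reading of the intended scope.
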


\begin{proof} If $A^g$ has infinite global dimension,
then $A^g\not\cong A$.

If $A^g$ has finite global dimension, by Proposition
\ref{xxprop6.7}(b), $G=\{Id,g\}$ for a reflection $g$
whose matrix on $A_1$ is of the form
$$ \left[\begin{array}{cc} I & \bar{0} \\ \bar{v} &
-1\end{array}\right]$$
for $\bar{v} = [a_1,b_1, \ldots , a_n, b_n]$.  A computation
shows that if $X_i = x_i + \frac{a_i}{2}z$
and $Y_i = y_i + \frac{b_i}{2}z$ for $i=1,2, \ldots, n$, then
$A^g$ is generated by the set
$$\{X_1,Y_1, X_2, Y_2, \ldots,X_n,Y_n, z^2\}$$
subject to the relations $X_iY_i-Y_iX_i=z^2$, with all other
generators commuting. In particular, $A^g$ is generated by $2n$
elements since $z^2=X_iY_i-Y_iX_i$. But $A$ is (minimally)
generated by $2n+1$ elements. Therefore $A^g\not\cong A$.
\end{proof}

\section{Further questions}
\label{xxsec7}

The results we have obtained suggest that the invariant theory of
Artin-Schelter regular rings merits further study.  We conclude by
describing a few directions that seem particularly interesting.

\subsection*{Bi-reflections}
In the case that $A=k[x_1, \cdots, x_n]$ Kac and Watanabe
\cite{KW} and Gordeev \cite{G} independently proved that if $A^G$
is a complete intersection and $G$ is a finite subgroup of
$GL_n(k)$, then $G$ is generated by bi-reflections (elements such
that rank($g-I) \leq 2$). Following our generalization of
reflections, a natural generalization of bi-reflection to a
regular algebra $A$ of dimension $n$ is to call a graded
automorphism $g$ of $A$ a quasi-bi-reflection if its trace has the
form:
\[Tr_A(g,t) = \frac{1}{(1-t)^{n-2} q(t)}\]
where $n$ is the GK-dimension of $A$ and $q(1) \neq 1$.  We have
constructed some examples  that suggest that this is a reasonable
definition (the fixed ring is a commutative complete
intersection).  As in the case of reflections, there are ``mystic
quasi-bi-reflections" (quasi-bi-reflections that are not
bi-reflections of $A_1$).  The notion of bi-reflection may be
useful in determining the proper notion of a non-commutative
complete intersection.

\subsection*{Hopf actions}
One can replace a finite group $G$ acting on an Artin-Schelter
algebra $A$ by a semi-simple Hopf algebra $H$ acting on $A$ \cite{Mon2}
and study properties of $A^H$.  We will report
some results on this case in [KKZ2].

\subsection*{Quotient division algebras}
When $A$ is a Noetherian domain and $G$ is a finite group of
automorphisms of $A$, then $G$ acts on $Q(A)$, the quotient
division ring of $A$.  By \cite[Theorem 5.3]{Mon1} it is known that
$Q(A)^G = Q(A^G)$.  The classical Noether problem is to determine
which linear finite group actions on $k[x_1, \cdots, x_n]$ have
rational fields of invariants, hence it is a natural question to
determine conditions when $Q(A)^G \cong Q(A)$.   Alev and Dumas
have shown that if $G$ is a linear finite abelian group of
automorphisms of $D_n({\mathbb C})$, the quotient division algebra
of the Weyl algebra $A_n({\mathbb C})$, then $D_n({\mathbb C})^G
\cong D_n({\mathbb C})$ \cite{AD1} (and for any finite group with
$n=1$ \cite{AD2}).  One could investigate similar questions for
the quotient division algebras of Artin-Schelter regular algebras.

\begin{example}
\label{xxex7.1}
Let $A$ be the Jordan plane $k_J[x,y]$, the algebra generated by
$x$ and $y$ with relation $xy-yx=x^2$.  We have noted that $A$
is rigid, so that $A^G$ is never isomorphic to $A$ for any
non-trivial finite group of automorphisms. The quotient division
algebra is
$$Q(A)=Q(\mathbb{C}\langle x,y^{-1}\rangle) =
Q(A_1(\mathbb{C}))= D_1(\mathbb{C}).$$
Let $G = \langle g\rangle $ be the group of automorphisms
generated by the automorphism of $A$ given by $g(x) = -x$ and
$g(y) = -y$.  Notice that $g$ induces an automorphism of
$\mathbb{C}\langle x,y^{-1}\rangle = A_1(\mathbb{C})$, so that
by \cite{AD2},
$$Q(A)^G= Q(\mathbb{C}\langle x,y^{-1}\rangle)^G
= Q(A_1(\mathbb{C}))^G \cong D_1(\mathbb{C}).$$
In this case we have $Q(A)^G \cong Q(A)$ even though $A^G$ is
not isomorphic to $A$.

For $A = k_{-1}[x,y]$ and $g$ the automorphism of $A$ given by
$g(x) = -x$ and $g(y) = y$, the invariant subring $A^G$ is the
commutative polynomial ring $k[x^2,y]$. In this case $Q(A)^G =
Q(A^G) = k(x^2,y)$ is not isomorphic to $Q(A)$.  Unlike the
commutative case, our more general notion of reflection groups
means that even when $G$ is a reflection group  $Q(A)^G$ need not
be isomorphic to $Q(A)$.
\end{example}

This paper gives a number of algebras where $A^G$ is never
isomorphic to $A$, so it would be interesting to determine
(a) when $Q(A)^G$ is isomorphic to $Q(A)$, and (b) when $Q(A)^G$
is isomorphic to $Q(B)$ for an Artin-Schelter regular algebra
$B$.

\section*{Acknowledgments}
The authors thank Paul Smith and Ralph Greenberg for several
useful discussions and valuable comments. The authors also thank
the referee for valuable suggestions. J.J. Zhang is supported by
the NSF and the Royalty Research Fund of the University of
Washington.

\providecommand{\bysame}{\leavevmode\hbox
to3em{\hrulefill}\thinspace}
\providecommand{\MR}{\relax\ifhmode\unskip\space\fi MR
}
\providecommand{\MRhref}[2]{%

\href{http://www.ams.org/mathscinet-getitem?mr=#1}{#2}
}
\providecommand{\href}[2]{#2}


\begin{thebibliography}{10}


%\bibitem[ASZ]{ASZ}
%K. Ajitabh, S.P. Smith and J.J. Zhang,
%Auslander-Gorenstein rings, Comm. Algebra
%{\bf 26} (1998), no. 7, 2159--2180.

\bibitem[AD1]{AD1} J. Alev and F. Dumas, Sur les invariants des algèbres de Weyl et de leurs corps de fractions.
Rings, Hopf algebras, and Brauer groups (Antwerp/Brussels, 1996),
1--10, Lecture Notes in Pure and Appl. Math., {\bf 197}, Dekker, New
York, 1998.

\bibitem[AD2]{AD2}  J. Alev and F. Dumas, Invariants du corps de Weyl sous l'action de groupes finis.
Comm. Algebra  {\bf 25}  (1997),  no. 5, 1655--1672.

\bibitem[AP]{AP}
J. Alev and P. Polo,
A rigidity theorem for finite group actions on enveloping
algebras of semisimple Lie algebras, Adv. Math.
{\bf 111} (1995), no. 2, 208--226.

\bibitem[ASc]{ASc}
 M. Artin and W. F. Schelter,  Graded algebras of global
           dimension $3$, Adv. in Math. {\bf 66} (1987), no. 2,
171--216.

%\bibitem[ASt1]{ASt1}
%M. Artin and J. T. Stafford,  Noncommutative graded
%           domains with quadratic growth, Invent. Math. {\bf 122}
%           (1995), no. 2, 231--276.

%\bibitem[ASt2]{ASt2}
%M. Artin and J. T. Stafford, Semiprime graded algebras
%           of dimension two, preprint.

\bibitem[ATV1]{ATV1}
M. Artin, J. Tate and M. Van den Bergh,
        Some algebras associated to automorphisms of elliptic
        curves, ``The Grothendieck Festschrift,''
        Vol. I, ed.\ P. Cartier et al., Birkh\"{a}user Boston
        1990, 33-85.

\bibitem[ATV2] {ATV2} M. Artin, J. Tate and M. Van den Bergh,
         Modules over regular algebras of dimension $3$, Invent. Math.
         {\bf 106} (1991), no. 2, 335--388.

\bibitem[AZ]{AZ}
M. Artin and J. J. Zhang, Noncommutative projective
        schemes, Adv. Math. {\bf 109} (1994), 228-287.



\bibitem[BR]{BR} G. Benkart and T. Roby, Down-up algebras,  J.
Algebra {\bf 209} (1998), 305-344. Addendum,   J. Algebra
{\bf 213}  (1999), no. 1, 378.

\bibitem[Be]{Be}
D.J. Benson,
Polynomial invariants of finite groups,
London Mathematical Society Lecture Note Series, 190.
Cambridge University Press, Cambridge, 1993.



\bibitem[CE]{CE}
A. Clark and J. Ewing,
The realization of polynomial algebras as cohomology rings,
Pacific J. Math. {\bf 50} (1974), 425--434.

\bibitem[Co]{Co}
H.S.M. Coxeter,
Discrete groups generated by reflections,
Ann. of Math. (2) {\bf 35} (1934), no. 3, 588--621.

\bibitem[G]{G} N.L. Gordeev, Invariants of linear groups generated by matrices with two nonidentity eigenvalues,
Zap. Nauchn. Sem. Leningrad. Otdel. Mat. Inst. Steklov. (LOMI)
{\bf 114} (1982), 120-130; English translation in J. Soviet Math.
{\bf 27} (1984), no. 4.

\bibitem[HW]{HW} G. H. Hardy and E. M Wright, ``An Introduction to the
Theory of Numbers", Fourth Edition, Oxford University Press, London, 1960.

\bibitem[JiZ]{JiZ}
N. Jing and J.J. Zhang,
On the trace of graded automorphisms,
J. Algebra {\bf 189} (1997), no. 2, 353--376.




\bibitem[JoZ]{JoZ}
P. J{\o}rgensen and J.J. Zhang,
Gourmet's guide to Gorensteinness,
Adv. Math. {\bf 151} (2000), no. 2, 313--345.

\bibitem[Jo]{Jo}
A. Joseph,
Coxeter structure and finite group action,
Alg{\`e}bre non commutative, groupes quantiques et invariants
(Reims, 1995), 185--219, S{\'e}min. Congr., 2, Soc. Math. France,
Paris, 1997.



\bibitem[KW]{KW} V. Kac and K. Watanabe, Finite linear groups whose ring
of invariants is a complete intersection, Bull, Amer. Math. Soc.
(N.S.) {\bf 6} (1982), no. 2, 221-223.


\bibitem[KKZ1]{KKZ1}
E. Kirkman, J. Kuzmanovich and J.J. Zhang,
A Shephard-Todd-Chevalley theorem for noncommutative
regular algebras, in preparation.

\bibitem[KKZ2]{KKZ2}
E. Kirkman, J. Kuzmanovich and J.J. Zhang, Hopf algebra
(co)-actions on Artin-Schelter regular algebras, in preparation.

\bibitem[KMP]{KMP} E. Kirkman, I. Musson, D. Passman, Noetherian
down-up algebras, Proc. Amer. Math. Soc. {\bf 127} (1999), 3161-3167.

\bibitem[KL]{KL}
G. Krause and T. Lenagan, ``Growth of Algebras and Gelfand-Kirillov
Dimension, revised edition, Graduate Studies in Mathematics, Vol. 22,
AMS, Providence, 2000.

\bibitem[Le]{Le}
T. Levasseur, Some properties of noncommutative regular rings,
Glasgow Math. J. {\bf 34} (1992), 277-300.

\bibitem[MR]{MR}
J. C. McConnell and J. C . Robson,
``Noncommutative Noetherian Rings,'' Wiley,
Chichester, 1987.

\bibitem[Mo]{Mo} R. Mollin, ``Algebraic Number Theory".
Chapman \& Hall/CRC, Boca Raton, FL, 1999.

\bibitem[Mon1]{Mon1}
S. Montgomery,
``Fixed rings of finite automorphism groups of associative
rings'', Lecture Notes in Mathematics, 818. Springer, Berlin, 1980

\bibitem[Mon2]{Mon2}
S. Montgomery,
``Hopf algebras and their actions on rings'',
         CBMS Regional Conference Series in
Mathematics, {\bf 82},
        Providence, RI, 1993.





\bibitem[RRZ]{RRZ}
Z. Reichstein, D. Rogalski and J.J. Zhang,
Projectively simple rings,
Adv. in Math, {\bf 203} (2006), 365-407.

\bibitem[ShT]{ShT}
G.C. Shephard and J.A. Todd,
Finite unitary reflection groups,
Canadian J. Math. {\bf 6}, (1954). 274--304.




\bibitem[Sm1]{Sm1}
S.P. Smith,
Can the Weyl algebra be a fixed ring?,
Proc. Amer. Math. Soc. {\bf 107} (1989),
no. 3, 587--589.

\bibitem[Sm2]{Sm2}
S.P. Smith,
Some finite-dimensional algebras related to elliptic curves,
Representation theory of algebras and related topics
(Mexico City, 1994), 315--348, CMS Conf. Proc., {\bf 19},
AMS, Providence, RI, 1996.

\bibitem[SmSt]{SmSt}
S. P. Smith and J. T. Stafford,
Regularity of the
        four-dimensional Sklyanin algebra, Compositio
Math. {\bf 83}
        (1992), no. 3, 259--289.



\bibitem[StZ]{StZ}
D.R. Stephenson and J.J. Zhang,
Growth of graded Noetherian rings,
Proc. Amer. Math. Soc.
{\bf 125} (1997), no. 6, 1593--1605.

\bibitem[TV]{TV}
J. Tate and M. Van den Bergh,
Homological properties of Sklyanin algebras,
Invent. Math. {\bf 124} (1996), no. 1-3, 619--647.





\bibitem[Z]{Z} J.J. Zhang,  Connected graded Gorenstein algebras with enough normal elements., J. Algebra  {\bf 189}
  (1997), no. 2, 390--405.
\end{thebibliography}
\end{document}